\setlist[enumerate]{label=\rm{(\arabic*)}}
\setlist[enumerate,2]{label=\rm{(\roman*)}}
\setlist[itemize]{label=\raisebox{0.25ex}{\tiny$\bullet$}}
\theoremstyle{plain}
\newtheorem{theorem}[subsection]{Theorem}
\newtheorem*{theorem*}{Theorem}
\newtheorem{lemma}[subsection]{Lemma}
\newtheorem*{lemma*}{Lemma}
\newtheorem{proposition}[subsection]{Proposition}
\newtheorem*{proposition*}{Proposition}
\newtheorem{corollary}[subsection]{Corollary}
\newtheorem{proposition-definition}[subsection]{Proposition-Definition}
\theoremstyle{definition}
\newtheorem{defn}[subsection]{Definition}
\newtheorem{notation}[subsection]{Notation}
\theoremstyle{remark} 
\newcommand{\ie}{\emph{i.e.}\ }
\numberwithin{equation}{section}
\newtheorem{decomposition-theorem}[subsection]{Decomposition Theorem}
\newtheorem{example}[subsection]{Example}
\newtheorem{remark}[subsection]{Remark}
\newtheorem{construction}[subsection]{Construction}
\newtheorem{warning}[subsection]{Warning}
\newcommand{\reminder}[1]{}
\newcommand{\Mod}{\mathrm{Mod}\,}
\newcommand{\op}{^{\mathrm{op}}}
\newcommand{\essIm}{\mathrm{essIm}}
\newcommand{\cCat}{\mathrm{Cat}}
\newcommand{\cCAT}{\mathrm{CAT}}
\newcommand{\cPDER}{\mathcal{PD}\mathit{ER}}
\newcommand{\cDER}{\mathcal{D}\mathit{ER}}
\newcommand{\Ho}{\mathrm{Ho}}
\newcommand{\colim}{\mathrm{colim}}
\newcommand{\Z}{\mathbb{Z}}
\newcommand{\N}{\mathbb{N}}
\newcommand{\id}{\mathrm{id}}
\newcommand{\mL}{\mathrm{L}}
\newcommand{\cc}{{\mathcal C}}
\newcommand{\D}{\mathscr{D}}
\newcommand{\E}{\mathscr{E}}
\renewcommand{\phi}{\varphi}
\renewcommand{\hat}[1]{\widehat{#1}}
\renewcommand{\tilde}[1]{\widetilde{#1}}
\newcommand{\cSp}{\mathcal{S}\!\mathit{p}}
\def\oneline{\vskip12pt}
\title{$\infty$-Dold-Kan correspondence via representation theory}
\author{Chiara Sava}
\address[Chiara Sava]{Univerzita Karlova, Matematicko-fyzik\'{a}ln\'{i} fakulta, Katedra Algebry, Sokolovsk\'{a} 49/83, 186 75 Praha 8, \v{C}esk\'a republika.}
\email{sava@karlin.mff.cuni.cz}
\subjclass[2020]{55U35 (Primary), 16G20, 16E35, 18G80 (Secondary)}
\keywords{Derivators, Representation Theory, Tilting Theory, Dold-Kan correspondence}
\thanks{Corresponding Author: Chiara Sava (sava@karlin.mff.cuni.cz).}
\begin{document}

\begin{abstract}
We give a purely derivator-theoretical reformulation and proof of a classic result of Happel and Ladkani, showing that it occurs uniformly across stable derivators and it is then independent of coefficients. The resulting equivalence provides a bridge between homotopy theory and representation theory: indeed, our result is a derivator-theoretic version of the $\infty$-Dold-Kan correspondence for bounded chain complexes. Moreover, our equivalence can also be realized as an action of a spectral bimodule in the setting of universal tilting theory developed by Groth and {\v S}{\v t}ov{\' i}{\v c}ek.  
\end{abstract}

\maketitle
\tableofcontents

\section{Introduction}
Over algebraically closed fields, one of the best understood examples of finite dimensional algebras are path algebras over Dynkin quivers \cite{gabriel} and their quotients by admissible ideals \cite{assemsimsonskowronski06}. The most relevant theory in this setting is due to Auslander and Reiten \cite{auslanderreiten75, auslanderreiten77}. In particular, this theory motivates why, in order to understand algebras, we study not only the category of finitely generated modules but also the associated bounded derived category. At this level, equivalences are usually found by applying the derived version of the Morita theory due to Rickard \cite{rickard89}. This theory, which is meant as a generalization of the tilting theory developed by Happel and Ringel \cite{happelringel82}, is based on the study of the so called \textit{tilting complex} (see also \cite{happel87}). Then, one obtains the desired equivalences as derived tensor products by tilting complexes. 

\medskip

Let $A_n$ be the Dynkin quiver of type $A$ with $n$ vertices. For a commutative ring $k$, Ladkani \cite{lad:rectangles} studied the construction of new tilting complexes realizing derived equivalences between tensor products of $k$-algebras over Dynkin quivers of type $A$ and $k$-algebras over the $A_n$ quiver with relations

\[ A_n \colon
\begin{tikzcd}
0 \arrow[rr, "\alpha_{0}"] &  & 1 \arrow[rr, "\alpha_{1}"] &  & 2 \arrow[rr, "\alpha_2"] &  & \cdots \arrow[rr, "\alpha_{n-2}"] &  & n-1
\end{tikzcd}.\]
A trivial case of these equivalences, which is also a consequence of the work by Happel \cite{happel88}, can be stated as follows. 
\begin{theorem}[{\cite[Corollary 1.2]{lad:rectangles}}]\label{dercat}Let $I$ be the ideal of the path algebra $kA_n$ generated by the relations $\alpha_{i+1}\alpha_i=0$ for $0 \leq i \leq n-3$. Then, there is an equivalence of derived categories of modules
\begin{equation}\label{derivedeq}
    \mathrm{D}(kA_n/I) \cong \mathrm{D}(kA_n)\,.\end{equation}
    \end{theorem}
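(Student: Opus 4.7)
The approach I would take is the classical tilting-theoretic one: exhibit an explicit tilting complex $T \in \mathrm{D}^b(kA_n)$ with $\End(T)^{\mathrm{op}} \cong kA_n/I$, and then invoke Rickard's derived Morita theorem \cite{rickard89} (which also upgrades the bounded equivalence to the unbounded one appearing in the statement). The shape of the tilting complex is dictated by the fact that $kA_n$ is hereditary, so all Yoneda products $\Ext^1 \otimes \Ext^1 \to \Ext^2$ vanish identically; this is what will force the radical-square-zero relations to appear on the endomorphism side.

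Concretely, I would take $T := \bigoplus_{i=0}^{n-1} S_i[i]$, where $S_i$ is the simple $kA_n$-module concentrated at vertex $i$. The fundamental calculation is
\[
\Hom_{\mathrm{D}^b(kA_n)}\!\bigl(S_i[i],\, S_j[j+\ell]\bigr) \;=\; \Ext^{\,\ell + j - i}(S_i, S_j),
\]
which, combined with $\Ext^1(S_i, S_j) = k$ iff $j = i+1$ (from the arrow $\alpha_i$) and the vanishing of $\Ext^{\geq 2}$, produces exactly the $A_n$-shaped pattern in cohomological degree zero and is identically zero in all other degrees. This verifies the tilting axioms — orthogonality $\Hom(T,T[\ell]) = 0$ for $\ell \neq 0$, and generation, since the $S_i$ already generate the perfect derived category — while perfection of each summand follows from finite global dimension. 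The identification $\End(T)^{\mathrm{op}} \cong kA_n/I$ then comes for free: the underlying quiver of $\End(T)^{\mathrm{op}}$ is $A_n$, and the composition $\Ext^1(S_i, S_{i+1}) \otimes \Ext^1(S_{i+1}, S_{i+2}) \to \Ext^2(S_i, S_{i+2})$ lands in a vanishing group, forcing $\alpha_{i+1}\alpha_i = 0$. A dimension count ($2n-1$ on each side) rules out further relations.

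The main obstacle — and, I suspect, the reason the paper routes through derivators rather than citing Rickard directly — is that the argument sketched above is genuinely only clean over a field. Over a general commutative ring $k$, the simples $S_i$ need not admit projective resolutions by $k$-projective modules, so the $\Ext$-based bookkeeping that drove the computation has to be replaced by something substantially more robust. The paper's derivator-theoretic reformulation is designed precisely to bypass this obstruction: the equivalence is produced uniformly in an arbitrary stable derivator, and independence of coefficients becomes an output rather than a hypothesis.
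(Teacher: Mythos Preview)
Your argument is correct, and in fact it is more robust than you give it credit for: the resolution $0\to P_{i+1}\to P_i\to S_i\to 0$ is valid over any commutative ring $k$, the $P_m$ are finitely generated free $kA_n$-modules (so each $S_i$ is perfect), and the computation $\Hom(P_m,S_j)\cong\delta_{mj}\,k$ goes through unchanged. Thus the tilting complex $T=\bigoplus S_i[i]$ already does the job over arbitrary $k$; your worry that ``the simples need not admit projective resolutions by $k$-projective modules'' is unfounded in this particular case. The orientation of the resulting quiver on $\End(T)^{\mathrm{op}}$ is reversed relative to the stated one, but this is harmless since the relabelling $i\mapsto n-1-i$ gives an isomorphism $kA_n/I\cong(kA_n/I)^{\mathrm{op}}$.

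That said, your route is genuinely different from the paper's. The paper does not prove \cref{dercat} at all in the classical sense: it is stated with a citation to Ladkani (and Happel) and treated as known. What the paper proves instead is the derivator-theoretic enhancement \cref{mainthm}, via an explicit chain of Kan extensions and homotopical epimorphisms (adding cocartesian squares, extending by zero, then collapsing along suitable restrictions), together with a separate verification (\cref{sec:relenhancement}) that the strict full subderivator $\D^{A(n,2)}$ really does enhance $\mathrm{D}(kA_n/I)$. Specialising to the derivator of a Grothendieck abelian category then recovers \cref{dercat} as a corollary. So your tilting-complex proof is shorter and self-contained for the derived-category statement, but the paper's argument yields strictly more: an equivalence of stable derivators natural in exact morphisms, which in particular gives the universal tilting bimodule of \cref{sec:tilting} and the link to the $\infty$-Dold--Kan correspondence. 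Your final paragraph correctly identifies this as the paper's motivation, even if the specific obstruction you name is not quite the operative one.
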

We aim to enhance and generalize the equivalence (\ref{derivedeq}) and for this purpose the language of derivators (see \cite{derptderstder}) turns out to be the most convenient one. Derivators, in fact, are meant to be a minimal extension of a derived category with a well behaved calculus of homotopy limits and colimits. The easiest example of a derivator consists of diagrams in a bicomplete category, where the left and right Kan extensions can always be computed pointwise (see \cite{maclane}). The definition of derivators axiomatises this property which, for derived categories in \cref{dercat}, is guaranteed if we consider the derived category of diagram categories (\emph{coherent diagrams}) instead of diagrams in the derived category (\emph{incoherent diagrams}).

\medskip

Specifically, in this article we work with stable derivators which, by definition, admit zero objects and whose homotopy pushout squares and homotopy pullback squares coincide. Stable derivators, introduced by Heller \cite{heller88, heller97} and Grothendieck \cite{grothendieck91}, were then studied further by Franke \cite{franke96}, Keller \cite{keller91} and Maltsiniotis \cite{maltsiniotis01, maltsiniotis07}. They are of general interest because, with the additional hypothesis of being \textit{strong} (cf. \cite[Definition 1.8]{derptderstder}), the underlying category of a strong and stable derivator is always a triangulated category. They are interesting for us because, given a Grothendieck category $\mathcal{G}$, the derived category of diagrams in $\mathcal{G}$ forms a so-called stable derivator. 

\medskip

In this article, we give a purely derivator-theoretic reformulation and proof of  Theorem \ref{dercat}; this shows that the phenomenon occurs uniformly across stable derivators and is independent of the ring of coefficients $k$. Namely, given a stable derivator $\D$, we refine the derived category $\mathrm{D}(kA_n)$ with the stable derivator shifted by the free category of $A_n$ (\cref{shifted}), which we denote by $\D^{A_n}$. The enhancement of the derived category $\mathrm{D}(kA_n/I)$ is more complicated to define as it involves the relations given by the ideal $I$. Indeed, we introduce the new notion of \textbf{strict full subderivator} (Section \ref{strictfullsubder}) which allows us to express the relations in the language of derivators and to define the correct enhancement as a particular strict full subderivator \begin{align}
\label{derivator intro}
\D^{A(n,2)} \subseteq \D^{\tilde{A}(n,2)}
\end{align} of the shifted derivator \[
\D^{\tilde{A}(n,2)}\]
where $\tilde{A}(n,2)$ is a suitable poset (see picture \ref{A(n,2)}). 

In \cref{sec:relenhancement}, we prove that (\ref{derivator intro}) is an enhancement of $\mathrm{D}(kA_n/I)$ and we explain the problem of enhancing the derived category of a quiver with relations via strict full subderivators.
\cref{sec5} is dedicated to our main result.

 \begin{theorem}[{Theorem \ref{mainthm}}]\label{intro:mainthm} There is an equivalence of stable derivators 
 \begin{equation}\label{intro}
\begin{tikzcd}
{\D^{A(n,2)}} \arrow[rr, "i^n", shift left=2] &  & \D^{A_n} \arrow[ll, "G^n", shift left=2]
\end{tikzcd}
      \end{equation}\label{eq:in}
where $i^n$ and $G^n$ are suitable compositions of left and right Kan extensions.
      \end{theorem}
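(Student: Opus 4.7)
My plan is to prove \cref{intro:mainthm} by factoring the functors $i^n$ and $G^n$ through the intermediate shifted derivator $\D^{\tilde{A}(n,2)}$ and organising the argument around the calculus of Kan extensions for stable derivators, in the spirit of the universal tilting machinery of Groth and {\v S}{\v t}ov{\' i}{\v c}ek.

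First, I would decompose the poset maps $A_n\to\tilde{A}(n,2)\leftarrow A(n,2)$ into a sequence of one-cell extensions, so that $i^n$ and $G^n$ arise as iterated left (respectively right) Kan extensions along \emph{fully faithful} functors of posets. The pointwise formula, combined with the standard fully faithfulness criterion for Kan extensions along fully faithful functors, then ensures that $i^n$ and $G^n$ are themselves fully faithful, and that their values at the newly added cells can be computed explicitly as homotopy (co)limits over slice categories of very small shape.

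Second, exploiting stability, I would characterise the essential image of $i^n$ inside $\D^{\tilde{A}(n,2)}$ as the subderivator of coherent diagrams whose elementary squares are bicartesian and whose outer corners vanish. The coincidence of cartesian and cocartesian squares in a stable derivator is crucial: a coherent witness of a composition $\alpha_{i+1}\alpha_i=0$ is precisely a bicartesian square with appropriate zero corner, the coherent shadow of the incoherent relation. I would then match this essential image with the strict full subderivator $\D^{A(n,2)}$ by translating its defining vanishing conditions (at every base $J\in\cCat$) into exactly these bicartesian conditions, using the coherent-versus-incoherent dictionary recalled in \cref{sec:homoepi}.

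Third, I would verify that the compositions $G^n\circ i^n$ and $i^n\circ G^n$ are naturally isomorphic to the identity. With $i^n$ and $G^n$ broken into elementary Kan-extension steps, this reduces to repeated application of base change (Beck-Chevalley) along the poset maps that appear at each step, together with the cancellation of a Kan extension followed by restriction along a fully faithful functor. The main obstacle will be the second step: identifying the essential image of $i^n$ with the strict full subderivator $\D^{A(n,2)}$ uniformly across all bases $J$. The difficulty is not at any single point of the derivator---the pointwise verification is governed by small slices---but in ensuring that the identification is natural in $J$ and compatible with base change. Handling this compatibility is where the formalism of strict full subderivators and homotopical epimorphisms developed in \cref{sec:homoepi} does the heavy lifting, and where the advantages of the derivator-theoretic framework over the ordinary triangulated one become visible.
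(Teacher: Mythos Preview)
Your plan has a genuine gap at the very first step. You propose to factor through $\D^{\tilde{A}(n,2)}$ via a poset embedding $A_n\to\tilde{A}(n,2)$ and then to realise $G^n$ as an iterated Kan extension along one-cell extensions of that embedding. But there is no such embedding along which a Kan extension lands in the strict full subderivator $\D^{A(n,2)}$: whichever chain of length~$n$ you choose inside $\tilde{A}(n,2)$, the pointwise formula at the positions $(i,i+1)$ yields a nontrivial (co)limit over a nonempty slice, never a zero object, so the vanishing conditions defining $\D^{A(n,2)}$ are not met. (Note also that $A(n,2)$ is not itself a poset---$\D^{A(n,2)}$ is by definition a strict full subderivator of $\D^{\tilde{A}(n,2)}$ cut out by vanishing conditions---so the span of poset maps you write does not literally make sense, and the ``essential image of $i^n$ inside $\D^{\tilde{A}(n,2)}$'' is a category error since $i^n$ lands in $\D^{A_n}$.)

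The paper's proof does not stay inside $\tilde{A}(n,2)$. It first \emph{enlarges} the poset by adjoining a new vertex (forming $\tilde{K}^n_{1,2}$) so that a cone object appears and the vanishing at $(0,1)$ can be traded for a bicartesian square; to iterate this for the remaining relations it then passes to genuinely \emph{three-dimensional} auxiliary posets $\tilde{K}^n_{l,m}=\tilde{K}^{\,n-l+1}_{1,m}\times[l-1]$. The transition between successive values of~$l$ is not a Kan extension at all but a restriction along a carefully chosen \emph{homotopical epimorphism}, and the detection criterion of \cref{prop:detect} together with \cref{lem:detect-Kan-to-colim} and \cref{lem:detect-final} is what makes those steps work. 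In your outline homotopical epimorphisms appear only as a bookkeeping device for naturality in the base~$J$, but in the actual argument they carry several of the equivalence steps themselves; without them the inductive reduction from $n$ to $n-1$ does not get off the ground.
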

The proof of this result involves equivalences of strict full subderivators which will often be taken care by \textbf{homotopical epimorphisms}: a technical tool introduced in \cite{gst:trees} which we discuss in \cref{sec:homoepi}.
Let us mention that this theorem is related to \cite[Corollary~9.15]{beckert19} where a different approach (involving hyperplanes) leads to an equivalent definition of $\D^{A(n,2)}$.

\medskip

While the first part of this article is dedicated to introduce and prove the main \cref{mainthm} which enhances a result in representation theory, \cref{sec:bridges} and \cref{sec:tilting} aim to explain how this result is closely related to homotopy theory. For this reason, the present work provides a bridge between these two areas. Indeed, we observe that equivalence (\ref{intro}) involves coherent chain complexes on the left hand side and filtered objects on the right hand side and this interpretation suggests a link with the $\infty$-Dold-Kan correspondence \cite[Theorem~1.2.4.1]{HA} (Theorem \ref{inftydold}). In this context, it is natural to investigate the relation between filtered objects and the $\infty$-category of coherent chain complexes \cite[Definition~35.1]{joyal:notes} arising from the generalization of the classical Dold-Kan correspondence \cite[Remark~1.2.4.3]{HA} (\cref{ari$4.7$}). This question was already answered by Ariotta in \cite[Theorem~4.7]{ariotta}. Since there is a canonical way to associate a stable derivator to a stable $\infty$-category (\cref{exder}), we get the following result.
\begin{proposition}[{\cref{relation}}] If we restrict to bounded chain complexes and bounded filtrations then
\cref{intro:mainthm} (\cref{mainthm}) is the derivator-theoretical version of \cite[Theorem~4.7]{ariotta} (\cref{ari$4.7$}).
\end{proposition}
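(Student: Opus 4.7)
The plan is to transfer the equivalence of \cref{mainthm} from the world of stable derivators to that of stable $\infty$-categories via the assignment recalled in \cref{exder}, and then to match the resulting equivalence with Ariotta's. Fix a stable $\infty$-category $\cc$ and consider its associated stable derivator $\D_\cc$. Evaluating \cref{mainthm} at $\D_\cc$ and passing to underlying $\infty$-categories produces an equivalence $(\D_\cc^{A(n,2)})(e)\simeq(\D_\cc^{A_n})(e)$ implemented by $i^n$ and $G^n$. The key point is then to identify these two underlying $\infty$-categories with the bounded versions of the two $\infty$-categories compared by Ariotta.

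The first identification is essentially formal: the underlying $\infty$-category of $\D_\cc^{A_n}$ is the $\infty$-category of coherent $A_n$-shaped diagrams in $\cc$, which is a model for bounded filtered objects in $\cc$ of length $n$, matching the bounded piece of Ariotta's filtered side. For the complex side, one has to unfold the definition of $\D^{A(n,2)}$ as a strict full subderivator of $\D^{\tilde A(n,2)}$ (\cref{strictfullsubder}): the vanishing conditions imposed along the shape $\tilde A(n,2)$ cut out exactly those coherent $A_n$-shaped sequences $X_0\to X_1\to\cdots\to X_{n-1}$ whose two-step compositions come equipped with coherent nullhomotopies, i.e.\ a bounded coherent chain complex in the sense of \cite[Definition~35.1]{joyal:notes} (\cref{ari$4.7$}).

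Third, one checks that $i^n$ and $G^n$, being assembled from left and right Kan extensions along the canonical embeddings of $A_n$ and $A(n,2)$ into the larger diagram governing the strict full subderivator, correspond under the identifications above to iterated cofibre/fibre constructions along a staircase. Since Ariotta's equivalence is itself built from such iterated (co)fibre sequences (the d\'ecalage/totalisation procedure of \cite[Theorem~4.7]{ariotta}), the two recipes produce the same functors. The alternative hypercube presentation \cite[Corollary~9.15]{beckert19} mentioned after \cref{mainthm} provides the bridge, since there the Kan extensions involved are manifestly the ones computing iterated (co)fibres.

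The main obstacle is the second step, namely proving rigorously that the strict full subderivator defining $\D^{A(n,2)}$ on a derivator of the form $\D_\cc$ models exactly Joyal's $\infty$-category of coherent chain complexes, and not merely a homotopically equivalent enlargement. One must verify that the relations encoded by the ambient poset $\tilde A(n,2)$, together with the strictness of the subderivator, translate precisely into the system of nullhomotopies witnessing $d^2=0$ at every level, and do so naturally in $\cc$; once this dictionary is in place, the comparison of the functors $i^n, G^n$ with Ariotta's totalisation is a diagrammatic check.
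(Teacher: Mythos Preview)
Your proposal omits the paper's central comparison tool and contains a gap in the matching of functors. The paper's argument hinges on the coherent Auslander--Reiten quiver $\D^{M_n,\mathrm{ex}}$ (\cref{reprth}): the key observation is that Ariotta's diagram in \cite[Remark~3.24]{ariotta} is precisely of shape $M_n$, with the vanishing and bicartesian conditions that define $\D^{M_n,\mathrm{ex}}$. Both equivalences are then compared by factoring them through $\D^{M_n,\mathrm{ex}}$ via \cref{thm:general}, and the identification of $\Ho(\mathrm{Ch}_{[n-1,0]}(\cc))$ with $\Ho_\cc(A(n,2))$ (your acknowledged obstacle) is obtained \emph{a posteriori} from the commutativity of the resulting square, not by a direct unfolding of the null-homotopy data encoded in $\tilde A(n,2)$.

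More seriously, your claim that $i^n$ and $G^n$ ``produce the same functors'' as Ariotta's $\mathscr{A}$ and $\mathscr{I}$ is not correct as stated. The paper shows explicitly that $u_n^\ast\mathscr{A}$ differs from $G^n$ by a nontrivial autoequivalence $b_n^\ast$ of $\D^{M_n,\mathrm{ex}}$, assembled from the shift $f_n^\ast$ and the Auslander--Reiten translation $t_n^\ast$ (\cref{symmetriesrem}, \cref{egs:doldkan}): the embedding $j^n\colon\tilde A(n,2)\to M_n$ of \cref{thm:general} does not hit the same objects of $M_n$ as Ariotta's complex, and this discrepancy must be corrected. Your appeal to iterated (co)fibre constructions and to Beckert's hypercubes does not supply this correction, and is in any case too vague to pin down which autoequivalence intervenes. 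A minor side point: evaluating $\D_\cc$ at $e$ returns the homotopy category $\Ho(\cc)$, not the underlying $\infty$-category, so the comparison necessarily lives at the triangulated level, as it does in the paper.
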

\noindent In particular, we are able to see the relation between these results through the refinement of the mesh category described by Groth and {\v S}{\v t}ov{\'\i}{\v c}ek in \cite[Theorem~4.6]{[3]} (Theorem \ref{thm:AR}).  
Finally, in \cref{sec:tilting} we see how \eqref{eq:in} is an equivalence given by an \emph{universal tilting bimodule} \cite[Section~10]{[3]}. Namely, since every stable derivator is a closed module over the derivator of spectra \cite{cisinskitabuada11} and thanks to the tilting theory for derivators \cite[Theorem~8.5]{[3]} (Theorem \ref{specact}), the functor $G^n$ in (\ref{intro}) can be written as a \emph{canceling tensor product}, as follows.
\begin{proposition}[{\cref{tensor}}] The following equivalence of functors holds
    \[ G^n \cong T_n\otimes_{[A_n]}-.\]
\end{proposition}
Here $T_n$ is a spectral bimodule which we also explain how to compute. 

\subsection*{Acknowledgements}
I deeply thank my Ph.D. supervisor Jan {\v S}{\v t}ov{\'\i}{\v c}ek for his guidance, availability and for giving me the opportunity to work on this project. Moreover, I would like to thank Francesco Genovese for his help and support during the preparation of this article. I also thank Sebastian Opper for interesting discussions and Isaac Bird, Janina Letz and Jordan Williamson for useful comments on a preliminary version of the manuscript. Finally, I would like to thank the anonymous referee for his thorough review and for giving me a different point of view on my work.

\section{Preliminaries on derivators}
\label{sec:review}
Let us start by recalling some basics about the theory of derivators. More details can be found in  \cite{derptderstder, gps:mayer}. 
Let $\cCat$ be the 2-category of small categories and $\cCAT$ the 2-category of large categories. The concept of a '2-category' CAT presents the usual set theoretical issues, since it does not form a category enriched over Cat. However, since this subtlety is not relevant to our purposes here, we freely use expressions like '2-category CAT' as convenient shorthand. 

\begin{defn}
A 2-functor $\D:\cCat\op\to\cCAT$ is called \textbf{prederivator}. 
\end{defn}

\noindent A typical example is the \textbf{represented prederivator} which is associated to a bicomplete category $C \in \cCAT $
\begin{equation}\label{reprder}
\D_C \colon A \mapsto C^A
\end{equation}

\noindent where $C^A$ is the category of functors from $A$ to $C$. Prederivators form a 2-category $\cPDER$ whose morphisms are pseudo-natural transformations and transformations of prederivators are modifications. 

\noindent Given a prederivator $\D$ and a functor $u: A \to B$ in $\cCat$, the application of $\D$ to $u$ gives two categories $\D(A), \D(B)$ and a functor
\[\D(u)=u^*: \D(B) \to \D(A)\]

\noindent which is called \textbf{restriction} along $u$. Similarly, given two functors $u,v:A \to B$ and a natural transformation $\alpha: u \to v$, by applying $\D$, we get an induced natural transformation \[\alpha^*: u^* \to v^*.\]

\noindent Let now $e \in \cCat$ be the terminal category, \ie the category with one object and identity morphism only. For an object $a \in A$, we denote by $a: e \to A$ the unique functor sending the object of $e$ to $a$. The restriction along this functor \[a^*: \D(A) \to \D(e)\] is called \textbf{evaluation} and it takes values in the \textbf{underlying category} $\D(e)$. For two objects $X,Y$ and a morphism $f:X \to Y$ in $\D(A)$ we denote by \[f_a:X_a \to Y_a\] its image under $a^*$. Given a morphism $f\colon a \to b$ in $A$, this gives a natural transformation from $a : e \to A$ to $b: e \to A$ and so we have a natural transformation: $ f^*:a^* \to b^*$ in $\cCAT$. We call $\D(A)$ the category of \textbf{coherent $A$-shaped diagrams} in $\D$. Given an object $X\in\D(A)$, the natural transformations $f^*$ allows us to define a functor \[\mathrm{dia}_A(X)\colon A\to\D(e)\]which assigns, to any element of $A$, the object $X_a$. We call this map \textbf{underlying incoherent diagram}. Underlying incoherent diagrams yield a functor
\begin{align*}
    \mathrm{dia}_A\colon \D(A) & \to\D(e)^A\\
           X & \mapsto \mathrm{dia}_A(X). 
          \end{align*}
which in general is not an equivalence as coherent diagrams cannot be determined by their underlying diagrams, not even up to isomorphism. To solve this problem one can require (homotopy) completeness properties which we will express through \textbf{Kan extensions}.
          
\begin{defn}
 Let $u: A \to B \in \cCat$ and consider the restriction functor $u^\ast\colon\D(B)\to\D(A)$. When they exist, we call the left adjoint of the restriction \[u_!\colon \D(A)\to\D(B)\] \textbf{left Kan extension}
and the right adjoint 
\[u_\ast\colon\D(A)\to\D(B)\]
\textbf{right Kan extension}.
\end{defn}

When $B=e$, we have a unique functor $\pi=\pi_A\colon A\to e$ and we observe that the left Kan extension $\pi_!=\mathrm{colim}_A$ is a \textbf{colimit functor} and the right Kan extension $\pi_\ast=\mathrm{lim}_A$ is a \textbf{limit functor}. For the represented prederivator (\ref{reprder}) Kan extensions exist because they exist for bicomplete categories and, in particular, they can be calculated pointwise (see \cite[X.3.1]{maclane}). This is not the only example of a prederivator for which we can explicitly write a formula to compute the Kan extensions. Then, in order to give such formulas in a wider generality, it is useful to depict canonical transformations through \textit{squares}, as follows. Let $\D$ be a prederivator, assume the Kan extensions always exist and consider natural transformation $\alpha\colon up\to vq \in \cCat$. We can depict $\alpha$ with the following diagram
\begin{equation}
  \vcenter{\xymatrix{
      D\ar[r]^p\ar[d]_q \drtwocell\omit{\alpha} &
      A\ar[d]^u\\
      B\ar[r]_v &
      C
    }}.\label{eq:hoexactsq'}
\end{equation}
 Thanks to the adjunction unit $\eta$ and counit $\epsilon$, we get the so called \textbf{canonical mate transformations}
\begin{gather}
  q_! p^* \stackrel{\eta}{\to} q_! p^* u^* u_! \stackrel{\alpha^*}{\to} q_! q^* v^* v_! \stackrel{\epsilon}{\to} v^* u_!  \mathrlap{\qquad\text{and}}\label{eq:hoexmate1'}\\
  u^* v_* \stackrel{\eta}{\to} p_* p^* u^* v_* \stackrel{\alpha^*}{\to} p_* q^* v^* v_* \stackrel{\epsilon}{\to} p_* q^*.\label{eq:hoexmate2'}
\end{gather}

In particular, we are interested in canonical mate transformations arising from  \textbf{slice squares}. Here we first recall the definition of a \textbf{slice category}.
\begin{defn}
    Let $u:A \to B$ be a functor in $\cCat$ and $b$ an object in $B$. The \textbf{slice category} $(u/b)$ consists of pairs $(a,f)$ where $a$ is an object in $A$ and  $f\colon u(a)\to b$ a morphism in $B$.  A morphisms between two objects $(a,f)$ and $(a',f')$ is a morphism $a \to a'$ in $A$ making the following triangle commute in $B$:
 
\[\begin{tikzcd}
u(a) \arrow[d, "f"'] \arrow[r] & u(a') \arrow[ld, "f'"] \\
b                             &                    
\end{tikzcd}.\]
The slice category $(b/u)$ is defined dually.
\end{defn}
\textbf{Slice squares} are of the form
\begin{equation}
\vcenter{
\xymatrix{
(u/b)\ar[r]^-p\ar[d]_-{\pi_{(u/b)}}\drtwocell\omit{}&A\ar[d]^-u&&(b/u)\ar[r]^-q\ar[d]_-{\pi_{(b/u)}}&A\ar[d]^-u\\
e\ar[r]_-b&B,&&e\ar[r]_-b&B\ultwocell\omit{},
}
}
\label{eq:Der4}
\end{equation}
and they come with canonical transformations $u\circ p\to b\circ\pi$ and $b\circ\pi\to u\circ q$. Here the functor $p\colon (u/b)\to A$ denotes the projection onto the first component and $q$ is defined dually. We are now ready to give the general definition of a prederivator which satisfies certain (homotopy) completeness conditions i.e. a \textbf{derivator}.

\begin{defn}\label{defn:derivator}
  A prederivator $\D\colon\cCat\op\to\cCAT$ is called \textbf{derivator} if it satisfies the following axioms:
  \begin{itemize}
  \item[(Der1)] $\D\colon \cCat\op\to\cCAT$ takes coproducts to products, i.e., the canonical map \[\D(\coprod A_i)\to\prod\D(A_i)\] is an equivalence.  In particular, $\D(\emptyset)$ is equivalent to the terminal category.
  \item[(Der2)] For any $A\in\cCat$, a morphism $f\colon X\to Y$ in $\D(A)$ is an isomorphism if and only if the morphisms $f_a\colon X_a\to Y_a$, for any $a\in A$, are isomorphisms in $\D(e).$
  \item[(Der3)] For every functor $u \colon A \to B$, there exist both the left Kan extension $u_!$ and right Kan extension $u_*$ of the restriction $u^*$.
  \item[(Der4)] For any functor $u\colon A\to B$ and any object $b\in B$, the canonical mate transformations associated to the slice squares \eqref{eq:Der4} are isomorphisms.
  \end{itemize}
\end{defn}

\noindent By axioms (Der$1$) and (Der$3$), it follows that $\D(A)$ has small categorical products and coproducts for each small category $A$, so in particular it has initial and terminal objects.

\smallskip

\noindent Derivators form a $2$-category $\cDER$ which is a full sub-$2$-category of $\cPDER$: morphisms of derivators are simply morphisms of underlying prederivators and, similarly, natural transformations are modifications.

\begin{example}\label{exder} Let us list some relevant examples of derivators.
\begin{itemize}
    \item[(1)]The represented prederivator $\D_C$ \eqref{reprder} is itself a derivator. Indeed, the Kan extensions $u_!,u_\ast$ are then the ordinary Kan extension functors and the underlying category is isomorphic to $C$ itself.
    \item[(2)] Let $\mathbf{C}$ be a \emph{Quillen model category} (see e.g.~\cite{quillen:ha,hovey:model}) with \emph{weak equivalences} $\mathbf{W}$. We define the underlying \textbf{homotopy derivator} $\mathscr{H}\!\mathit{o}(\mathbf{C})$ by formally inverting the pointwise weak equivalences \[\mathscr{H}\!\mathit{o}(\mathbf{C}):A\mapsto  (\mathbf{C}^A)[(\mathbf{W}^A)^{-1}]\] 
    (see \cite{derptderstder} if $\mathbf{C}$ is combinatorial and \cite{cisinskidircoho} for a more general proof). The Kan extensions of $\mathscr{H}\!\mathit{o}(\mathbf{C})$  are the derived versions of the Kan extension of $\D_{\mathbf{C}}$ and the underlying category of $\mathscr{H}\!\mathit{o}(\mathbf{C})$ is the homotopy category $\Ho(\mathbf{C})=\mathbf{C}[\mathbf{W}^{-1}]$ of $\mathbf{C}$. 
\item[(3)] Let $\cc$ be a bicomplete $\infty$-category $\cc$ in the sense of Joyal \cite{joyal:barca} and Lurie \cite{HTT} (see \cite{groth:scinfinity} for an introduction). We define the prederivator $\Ho_{\mathcal{C}}$ by
\[\Ho_{\mathcal{C}}\colon A \mapsto \Ho(\cc^{N(A)})\] where $N(A)$ is the nerve of $A$. For bicomplete $\infty$-categories this yields the \textbf{homotopy derivator} of $\cc$; a sketch of the proof for this fact can be found in~\cite{gps:mayer}.
\end{itemize}
\end{example}

The axioms of derivators allow us to define new derivators out of given ones.
\begin{proposition}\label{shifted}
Let $\D,\E$ be derivators, $A,B \in \cCat$. The following functors
    \[\D^{B}:A \mapsto \D(B\times A) \quad \text{and} \quad \D\op:A \mapsto \D(A\op)\op.
    \] are both derivators, called respectively \textbf{shifted derivator} \cite[Theorem~1.25]{derptderstder} and  

 \textbf{opposite derivator} \cite[Example~1.11]{derptderstder}.

\end{proposition}

\noindent The shifted derivator is the construction we will mostly work with because it allows us to study the homotopy theory of coherent diagrams of shape $B$ in $\D$.

\smallskip

As in (Der$4$), the request that certain canonical maps are isomorphisms often appears while working with derivators. It is then useful to introduce the notion of \textbf{homotopy exact square}: a square as in \eqref{eq:hoexactsq'} is \textbf{homotopy exact} if, for every derivator $\D$, the canonical mates \eqref{eq:hoexmate1'} and \eqref{eq:hoexmate2'} are isomorphisms. In particular, it is possible to show that \eqref{eq:hoexmate1'} is an isomorphism if and only if this is the case for \eqref{eq:hoexmate2'}. Homotopy exact squares are compatible with pasting \ie the passage to canonical mates \eqref{eq:hoexmate1'} and \eqref{eq:hoexmate2'} is functorial with respect to horizontal and vertical pasting (see p.$327$ in \cite{derptderstder}). Consequently, horizontal and vertical pastings of homotopy exact squares are homotopy exact \cite[Lemma~1.14]{derptderstder}. Other examples of homotopy exact squares can be found in \cite{maltsiniotis:htpy-exact} and \cite{derptderstder,gps:mayer,gst:basic}.

\begin{proposition}\label{kanextful}
The following are fundamental properties of Kan extensions.
    \begin{enumerate}
    \item Kan extensions along fully faithful functors are fully faithful. Namely, let $u\colon A\to B$ be a fully faithful functor, then the unit $\eta\colon \id\to u^\ast u_!$ and the counit $\epsilon\colon u^\ast u_\ast\to id$ are isomorphisms  \cite[Proposition~1.20]{derptderstder}.
    \item Kan extensions and restrictions in unrelated variables commute. Namely, given two functors $u\colon A\to B$ and $v\colon C\to D$ then the commutative square
\begin{equation}
\vcenter{
\xymatrix{
A\times C\ar[r]^-{u\times \id}\ar[d]_-{ \id\times v}&B\times C\ar[d]^-{\id\times v}\\
A\times D\ar[r]_-{u\times \id}&B\times D
}}
\end{equation}
is homotopy exact, i.e., in every derivator the canonical mate transformations $(\id \times v)_!(u \times \id)^* \to (u \times \id)^*(\id \times v)_!$ and $(u \times \id)^*(\id \times v)_* \to (\id \times v)_*(u \times \id)^*$ are isomorphisms \cite[Proposition~2.5]{derptderstder}. 
\item Right adjoint functors are \textbf{homotopy final}. If $u\colon A\to B$ is a right adjoint, then the square
\[
\xymatrix{
A\ar[r]^-u\ar[d]_-{\pi_A}\drtwocell\omit{\id}&B\ar[d]^-{\pi_B}\\
e \ar[r]_-\id& e
}
\]
is homotopy exact i.e., the canonical mate $\mathrm{colim}_Au^\ast\to\mathrm{colim}_B$ is an isomorphism \cite[Proposition~1.18]{derptderstder}. In particular, if $b\in B$ is a terminal object, then there is a canonical isomorphism $b^\ast\cong\colim_B$.
    \end{enumerate}
    \end{proposition}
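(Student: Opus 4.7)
My plan is to treat (3) first, since (1) uses its terminal-object corollary, and then (2) as an independent pointwise computation that also invokes (3). For (3), let $w\colon B\to A$ be a left adjoint to $u$ with unit $\eta\colon\id_B\to uw$ and counit $\epsilon\colon wu\to\id_A$. Applying the 2-functor $\D\colon\cCat\op\to\cCAT$ is contravariant on 1-cells but preserves the direction of 2-cells, so one obtains $\eta^\ast\colon\id\to w^\ast u^\ast$ and $\epsilon^\ast\colon u^\ast w^\ast\to\id$; the triangle identities are preserved by 2-functoriality, yielding an adjunction $u^\ast\dashv w^\ast$ in $\cCAT$. By uniqueness of adjoints (compare with $w_!\dashv w^\ast$) one deduces $u^\ast\cong w_!$. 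Combining this with pseudofunctoriality of Kan extensions and the tautology $\pi_A\circ w=\pi_B$ gives
\[
\colim_A u^\ast \;\cong\; \pi_{A,!}\, w_! \;\cong\; (\pi_A w)_!\;=\;\pi_{B,!}\;=\;\colim_B,
\]
and a diagram chase through the triangle identities identifies this with the canonical mate. For the final statement, observe that if $b\in B$ is terminal then $\pi_B\dashv b$ by the universal property of $b$, so $b\colon e\to B$ is a right adjoint and the first part of (3) yields $b^\ast\cong\colim_B$.

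For (1), fix a fully faithful $u\colon A\to B$; we show the unit $\eta\colon\id\to u^\ast u_!$ is an isomorphism, the argument for the counit being dual. By (Der2) it suffices to check $\eta_a$ is an iso for every $a\in A$. Applying (Der4) to $u$ at $u(a)\in B$ gives
\[
(u(a))^\ast u_! \;\cong\; \pi_{(u/u(a)),!}\, p^\ast,
\]
where $p\colon(u/u(a))\to A$ is the projection. Full faithfulness of $u$ implies that $(a,\id_{u(a)})$ is a terminal object of $(u/u(a))$: for any $(a',f\colon u(a')\to u(a))$, one has $f=u(\tilde f)$ for a unique $\tilde f\colon a'\to a$, yielding the unique morphism to $(a,\id)$. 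Applying the terminal-object corollary from (3) collapses the colimit onto evaluation at $(a,\id)$, so $(u(a))^\ast u_! X\cong X_a$. Unwinding the definition of the mate identifies this composite isomorphism with the inverse of $\eta_a$.

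For (2), we verify that the mate $(\id_A\times v)_!(u\times\id_C)^\ast\to (u\times\id_D)^\ast(\id_B\times v)_!$ is an iso pointwise at each $(a,d)\in A\times D$. Applying (Der4) to both left Kan extensions produces colimits over the product slice categories
\[
((\id_A\times v)/(a,d))\;=\;(A/a)\times(v/d), \qquad ((\id_B\times v)/(u(a),d))\;=\;(B/u(a))\times(v/d).
\]
In each product the first factor has a terminal object, namely $(a,\id_a)$ and $(u(a),\id_{u(a)})$ respectively. For any product $K\times L$ with $K$ admitting a terminal object $t$, the embedding $(t,\id_L)\colon L\to K\times L$ is right adjoint to the projection $K\times L\to L$, so part (3) implies that the colimit over $K\times L$ reduces to the colimit over $L$ of the restriction along $(t,\id_L)$. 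Both sides of the mate thus reduce to the common colimit $\colim_{(v/d)}X_{(u(a),-)}$, and compatibility of mates with horizontal and vertical pasting of homotopy exact squares ensures that this identification is canonical.

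The main obstacle common to all three parts is verifying that the isomorphisms one constructs really are the canonical mates produced by the universal properties, rather than just abstractly isomorphic functors. In (3) this is settled by the 2-functoriality of $\D$; in (1) it requires tracking the slice-square mate after restriction to the terminal object of $(u/u(a))$; in (2) it is a bookkeeping exercise that follows once one unpacks the pasting of the decomposed slice squares into the canonical mate.
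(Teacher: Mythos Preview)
Your proof is correct and follows the standard arguments; however, there is nothing in the paper to compare it against. The paper does not prove this proposition at all: each of the three items is simply quoted from \cite{derptderstder} (Propositions~1.20, 2.5, and~1.18 respectively), with no argument given in the present text. What you have written is essentially a reconstruction of those cited proofs, and your ordering (3) $\Rightarrow$ (1), then (2), matches the logical dependencies in that reference as well.

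One minor remark on presentation: in part (2) you correctly identify the product decomposition of the slice categories and reduce via (3), but the sentence ``compatibility of mates with horizontal and vertical pasting of homotopy exact squares ensures that this identification is canonical'' is doing real work and is somewhat compressed. The actual verification that the composite isomorphism you build agrees with the canonical mate of the original square requires decomposing that square as a pasting of slice squares and a square witnessing the finality from (3), and then invoking functoriality of mates. You acknowledge this in your closing paragraph, so you are aware of the issue; in a fully written-out proof this bookkeeping would need to be made explicit (as it is, for instance, in \cite{derptderstder}).
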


\noindent Since Kan extensions and restrictions in unrelated variables commute, we have parametrized versions of restriction and Kan extension functors. Namely, for a derivator $\D$ and a functor $u\colon A\to B$, there are adjunctions of derivators
\begin{equation}
(u_!,u^\ast)\colon\D^A\rightleftarrows\D^B\qquad\text{and}\qquad
(u^\ast,u_\ast)\colon\D^B\rightleftarrows\D^A.\label{eq:Kan-adjunction}
\end{equation}
which are defined internally to the 2-category $\cDER$ \cite{[3],derptderstder}. In particular, if we restrict these adjunctions of derivators to the underlying categories, we obtain the corresponding adjunctions given by (Der$3$):
\begin{equation}
(u_!,u^\ast)\colon\D(A)\rightleftarrows\D(B)\qquad\text{and}\qquad
(u^\ast,u_\ast)\colon\D(B)\rightleftarrows\D(A).\label{eq2:Kan-adjunction}
\end{equation}
Moreover, if $u$ is fully faithful then in both the above cases, $u_!,u_\ast$ are fully faithful as well. This observation is relevant because it implies that they induce equivalences of derivators onto their essential images. We also have the following proposition.

\begin{proposition}[{\cite[Corollary~2.6]{derptderstder}}]\label{prodfunct} Let $\D$ be a derivator, $C$ a small category and let $u:A \to B$ be a fully faithful functor. An object $X \in \D^C(B)$ lies in the essential image of $u_!:\D^C(A) \to \D^C(B)$ if and only if $X_c$ lies in the essential image of $u_!:\D(A) \to \D(B)$ for all $c \in C$.
\end{proposition}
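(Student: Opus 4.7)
The plan is to use that the essential image of a fully faithful left adjoint is detected by invertibility of its counit, and then to reduce the condition on $X$ to a pointwise one in $C$ via (Der2) combined with the commutation of restrictions and Kan extensions in unrelated variables (\cref{kanextful}(2)).

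Since $u\colon A\to B$ is fully faithful, the parametrized adjunction \eqref{eq:Kan-adjunction} yields that $u_!\colon \D^C(A)\to\D^C(B)$ is again fully faithful by \cref{kanextful}(1); in particular its unit $\eta\colon\id\to u^\ast u_!$ is an isomorphism of functors on $\D^C(A)$. Consequently an object $X\in\D^C(B)$ lies in $\essIm(u_!)$ if and only if the counit $\epsilon_X\colon u_! u^\ast X\to X$ is an isomorphism.

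For the forward direction, write $X\cong u_!Y$ with $Y\in\D^C(A)=\D(C\times A)$. For each $c\in C$, the commutative square with horizontal arrows $c\times\id_A$ and $c\times\id_B$ and vertical arrows $u$ and $\id_C\times u$ is homotopy exact by \cref{kanextful}(2), so its canonical mate gives a natural isomorphism $(c\times\id_B)^\ast(\id_C\times u)_!\cong u_!(c\times\id_A)^\ast$. Evaluating at $Y$ yields $X_c\cong u_! Y_c$, which lies in the essential image of $u_!\colon\D(A)\to\D(B)$. For the converse, suppose $X_c\in\essIm(u_!)$ for every $c\in C$. Iterated application of (Der2), first in the variable $C$ and then in $B$, shows that the counit $\epsilon_X$ in $\D(C\times B)$ is an isomorphism provided $c^\ast\epsilon_X$ is an isomorphism in $\D(B)$ for every $c\in C$. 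The same mate calculus as above identifies $c^\ast\epsilon_X$ with the counit $\epsilon_{X_c}\colon u_! u^\ast X_c\to X_c$ of the adjunction $(u_!,u^\ast)\colon\D(A)\rightleftarrows\D(B)$; by hypothesis $X_c$ lies in the essential image of this $u_!$, so $\epsilon_{X_c}$ is an isomorphism. Hence $\epsilon_X$ is itself an isomorphism and $X\in\essIm(u_!)$.

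The most delicate point is verifying that the canonical mate obtained from the unrelated-variables square really identifies $c^\ast\epsilon_X$ with $\epsilon_{X_c}$; this is a routine but careful unwinding of the triangle identities for the parametrized adjunction $(u_!,u^\ast)$ together with the naturality of the mate, and it is essentially the only place where one must be fastidious.
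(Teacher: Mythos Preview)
The paper does not supply its own proof of this proposition; it is quoted verbatim as \cite[Cor.~2.6]{derptderstder} and used as a black box. Your argument is correct and is in fact the standard one used in the cited reference: detect membership in $\essIm(u_!)$ via invertibility of the counit $\epsilon_X$, then reduce to the pointwise condition using (Der2) together with the homotopy exactness of the unrelated-variables square from \cref{kanextful}(2). The one point you flag as delicate---that the mate isomorphism carries $c^\ast\epsilon_X$ to $\epsilon_{X_c}$---is exactly the compatibility of canonical mates with adjunction counits, and it holds by the functoriality of mates with respect to pasting; this is indeed routine. So your proof is complete and matches the approach of the source the paper cites.
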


Finally, given a (pre)derivator $\D$, we write $X\in\D$ to indicate that there is a small category~$A$ such that $X\in\D(A)$.

\subsection{Stable derivators} \label{sec:stable}
In this article we aim to enhance an equivalence of triangulated categories. Higher categorical enhancements of triangulated categories have the additional properties of being \emph{stable}, which means that they in some sense behave like abelian categories. In this subsection we then recall some basics about stable derivators.

\begin{defn} 
A derivator $\D$ is \textbf{pointed} if $\D(e)$ has a zero object. 
\end{defn}

\begin{proposition}[{\cite[Proposition~3.2]{derptderstder}}]
    If $\D$ is pointed then so are the shifted derivators $\D^B$ and its opposite $\D\op$. 
\end{proposition}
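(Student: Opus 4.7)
The plan is to produce an explicit zero object in each of $\D^B(e)$ and $\D\op(e)$, verifying both the initial and terminal universal properties by means of the Kan-extension adjunctions granted by axiom (Der3). The strategy in both cases is to transport the zero object $0$ of $\D(e)$ along a canonical restriction.

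The case of $\D\op$ is immediate from self-duality: by definition $\D\op(e) = \D(e\op)\op = \D(e)\op$, and the defining property of a zero object, namely being simultaneously initial and terminal, is manifestly invariant under passage to the opposite category. Hence the zero object of $\D(e)$ already serves as a zero object of $\D(e)\op$, so $\D\op$ is pointed.

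For the shifted derivator, unfolding the definition gives $\D^B(e) = \D(B\times e) \cong \D(B)$, so the task reduces to producing a zero object in $\D(B)$. I would take $\bar{0} := \pi_B^\ast 0$, where $\pi_B \colon B \to e$ is the unique functor to the terminal category. By (Der3) the functor $\pi_B^\ast$ admits both a left adjoint $\pi_{B!}$ and a right adjoint $\pi_{B\ast}$, so for every $X \in \D(B)$ the two adjunctions yield
\[
\Hom_{\D(B)}(X,\bar{0}) \cong \Hom_{\D(e)}(\pi_{B!}X,\,0) \qquad \text{and} \qquad \Hom_{\D(B)}(\bar{0},X) \cong \Hom_{\D(e)}(0,\,\pi_{B\ast}X).
\]
Since $0$ is at once terminal and initial in $\D(e)$, both right-hand sides are singletons, so $\bar{0}$ is a zero object of $\D^B(e) = \D(B)$.

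There is no serious obstacle here; the content is just the interplay between adjunctions and the pointedness of $\D(e)$. The only conceptual point worth noting is that one should not confuse $\D^B$ being pointed, which is strictly a statement about its underlying category $\D^B(e)=\D(B)$, with the (equally easy) statement that every value $\D(A)$ admits a zero object, which follows by exactly the same argument with $B$ replaced by $A$ and in fact sharpens the conclusion to a uniform statement.
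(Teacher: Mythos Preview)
Your proof is correct. The paper states this proposition without proof, treating it as a standard fact; your argument via the adjunctions $(\pi_{B!},\pi_B^\ast,\pi_{B\ast})$ to produce the zero object $\pi_B^\ast 0\in\D(B)$, together with the self-duality observation for $\D\op$, is exactly the expected justification and is what the paper's subsequent remark ``In particular, for any $A\in\cCat$, $\D(A)$ have zero objects which are preserved by restriction and Kan extension functors'' alludes to.
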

\noindent In particular, for any $A \in \cCat$, $\D(A)$ have zero objects which are preserved by restriction and Kan extension functors. 
If $\D$ is a pointed derivator, some inclusion functors of small categories are especially interesting because we can describe the image of their Kan extensions.

\begin{defn}
    A functor $u\colon A\to B$ is a \textbf{sieve} if it is fully faithful and if for any morphism $b\to u(a)$ in $B$ there exists an $a'\in A$ with $u(a')=b$. A \textbf{cosieve} is defined dually.
\end{defn}

\noindent Note that, by \cref{kanextful}, if $u: A \to B$ is a (co)sieve then the Kan extensions $u_!, u_*$ are fully faithful.

Sieves and cosieves are what we need to realize \textbf{extensions by zero objects}.

\begin{proposition}[{\cite[Proposition~1.23]{derptderstder}}]\label{extbyzero}
Let $\D$ be a pointed derivator. 
\begin{enumerate}
\item Let $u:A \to B$ be a cosieve. Then the left Kan extension $u_!$ is fully faithful and $X \in \D(B)$ lies in the essential image of $u_!$ if and only if $X_b$ is zero for all $b \in B \setminus u(A)$.
\item Let $u:A \to B$ be a sieve. Then the Kan extension $u_*$ is fully faithful and $X \in \D(B)$ lies in the essential image of $u_*$ if and only if $X_b$ is zero for all $b \in B \setminus u(A)$.
\end{enumerate}
\end{proposition}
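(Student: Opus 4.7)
The plan is to prove (1) directly and then deduce (2) by passing to the opposite derivator via Proposition \ref{shifted}(2). For (1), the fully faithfulness of $u_!$ is immediate from Proposition \ref{kanextful}(1), since a cosieve is by definition fully faithful. So the substantive content is the characterization of the essential image, which will be checked pointwise via (Der2) using the slice square (Der4) together with (Der1) and pointedness.

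The key geometric input is a slice-category analysis. If $u\colon A\to B$ is a cosieve and $b\in B\setminus u(A)$, then the slice category $(u/b)$ is empty: any object $(a,f\colon u(a)\to b)$ would, by the defining property of a cosieve, force $b=u(a')$ for some $a'\in A$. The slice square \eqref{eq:Der4} is homotopy exact by (Der4), which gives a canonical isomorphism $b^\ast u_!\cong \pi_{(u/b)!}\,p^\ast$ on $\D(A)$. Since $(u/b)=\emptyset$, axiom (Der1) identifies $\D(\emptyset)$ with the terminal category, and so $\pi_!\colon\D(\emptyset)\to\D(e)$ lands on an initial object of $\D(e)$. Because $\D$ is pointed this initial object is zero, hence $(u_!Y)_b=0$ for every $Y\in\D(A)$. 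This proves the ``only if'' direction.

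For the ``if'' direction, assume $X\in\D(B)$ satisfies $X_b=0$ for all $b\notin u(A)$, and consider the counit $\epsilon\colon u_!u^\ast X\to X$; the aim is to show that $\epsilon$ is an isomorphism, exhibiting $X$ as lying in the essential image of $u_!$. By (Der2) it suffices to check that $b^\ast\epsilon$ is an isomorphism for each $b\in B$. If $b=u(a)$, then $b^\ast=a^\ast u^\ast$; the triangle identity combined with the fact that the unit $\eta\colon\id\to u^\ast u_!$ is an isomorphism (Proposition \ref{kanextful}(1), applied to the fully faithful $u$) forces $u^\ast\epsilon$ to be an isomorphism, and hence so is $b^\ast\epsilon=a^\ast(u^\ast\epsilon)$. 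If $b\notin u(A)$, then $(u_!u^\ast X)_b=0$ by the slice-category computation above (applied to $Y=u^\ast X$), while $X_b=0$ by hypothesis, so $b^\ast\epsilon\colon 0\to 0$ is trivially an isomorphism.

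Part (2) then follows by duality: applying (1) to the cosieve $u\op\colon A\op\to B\op$ in the pointed derivator $\D\op$ translates $(u\op)_!$ back to $u_\ast$ in $\D$, and the condition ``$X_b$ is zero on $B\setminus u(A)$'' is self-dual. The main subtlety is the empty slice computation: one must carefully combine (Der1) with pointedness to conclude that Kan extension out of $\emptyset$ yields zero rather than merely an initial object; everything after that is a routine pointwise check through (Der2) and the triangle identities.
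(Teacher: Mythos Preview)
Your proof is correct and follows the standard argument; note however that this paper does not actually prove this proposition but merely cites it from \cite[Prop.~1.23]{derptderstder}, so there is no proof in the present paper to compare against. The approach you give---empty slice categories for $b\notin u(A)$ via the cosieve condition, (Der4) and (Der1) to identify $(u_!Y)_b$ with an initial object, pointedness to conclude it is zero, and a pointwise check of the counit via (Der2) and the triangle identities---is exactly the argument in the cited reference.
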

\noindent
 In particular, when $u$ is cosieve we call the functor $u_!$ \textbf{left extension by zero} and, when $u$ is a sieve, we call $u_\ast$ \textbf{right extension by zero}.

\medskip
 
Stable derivators are pointed derivators with an additional property. Let $[n]$ be the poset $(0<\cdots<n)$ considered as a category. The commutative square $\square=[1]\times[1],$
\begin{equation}
\begin{tikzcd}
{(0,1)} \arrow[r]           & {(1,1)}           \\
{(0,0)} \arrow[u] \arrow[r] & {(1,0)} \arrow[u] 
\end{tikzcd}
\label{eq:square}
\end{equation}

\noindent comes with full subcategories $i_\llcorner\colon\llcorner\to\square$ and $i_\urcorner\colon\urcorner\to\square$ obtained by removing the terminal object and the initial object, respectively. Since both inclusions are fully faithful, so are $(i_\llcorner)_!\colon\D^\llcorner\to\D^\square$ and $(i_\urcorner)_\ast\colon\D^\urcorner\to\D^\square$. 

\begin{defn}
    A square $X\in\D^\square$ is \textbf{cocartesian} if it lies in the essential image of $(i_\llcorner)_!$ and it is \textbf{cartesian} if it lies in the essential image of $(i_\urcorner)_\ast$. A square which is both cartesian and cocartesian is called \textbf{bicartesian}.
\end{defn}

\noindent In the proof of \cref{mainthm}, we often deal with checking whether a square contained in a larger diagram is (co)cartesian. This is possible thanks to the following technical Proposition.

\begin{proposition}[{\cite[Proposition~3.10]{derptderstder}}]\label{ccsq} 
Let $i:\square \to B$ be a square in $B$ and let $u:A \to B$ be a functor.\begin{enumerate}
\item Assume that the induced functor $ \llcorner \stackrel{\tilde{i}}{\rightarrow}(B\setminus i(1,1))_{/i(1,1)}$ has a left adjoint and that $i(1,1)$ does not lie in the image of $u$. Then for all $X=u_!(Y) \in \D(B)$, $Y\in\D(A)$, the induced square $i^*(X)$ is cocartesian. \item Assume that the induced functor $ \urcorner \stackrel{\tilde{i}}{\rightarrow}(B\setminus i(0,0))_{i(0,0)/}$ has a right adjoint and that $i(0,0)$ does not lie in the image of $u$. Then for all $X=u_*(Y) \in \D(B)$, $Y\in\D(A)$, the induced square $i^*(X)$ is cartesian. 
\end{enumerate}

\end{proposition}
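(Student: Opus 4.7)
By duality through the opposite derivator (\cref{shifted}(2)), it suffices to prove (1). I plan to show directly that the counit
\[
(i_\llcorner)_!(i_\llcorner)^* i^*(u_!Y) \longrightarrow i^*(u_!Y)
\]
is an isomorphism in $\D(\square)$, which by definition says $i^*(u_!Y)$ is cocartesian. Fully faithfulness of $i_\llcorner$ together with \cref{kanextful}(1) makes the counit invertible on the three objects of $\llcorner$, and (Der2) then reduces the whole problem to checking the component at $(1,1)$.

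Write $b_0 = i(1,1)$ and $j = i\circ i_\llcorner\colon\llcorner\to B$. Since $(1,1)$ is terminal in $\square$, the slice $(i_\llcorner/(1,1))$ is canonically $\llcorner$, so the slice square axiom (Der4) identifies the source of the counit at $(1,1)$ with $\colim_\llcorner j^*(u_!Y)$ and its target with $b_0^*(u_!Y)$. It thus suffices to exhibit a natural isomorphism
\[
\colim_\llcorner j^*(u_!Y) \;\cong\; b_0^*(u_!Y).
\]

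The hypothesis $b_0\notin\im(u)$ permits a factorization $u = v\circ u'$ with $v\colon B-b_0\hookrightarrow B$ fully faithful and $u'\colon A\to B-b_0$; consequently $u_!\cong v_!u'_!$. Applying Der4 to the slice square for $v$ at $b_0$ identifies $(v/b_0)$ with $(B-b_0)_{/b_0}$ and yields
\[
b_0^*(u_!Y) \;\cong\; \colim_{(B-b_0)_{/b_0}}\, p'^*(u'_!Y),
\]
where $p'$ is the canonical projection. By construction $j = v\circ p'\circ\tilde{i}$, and $v^*v_!\cong\id$ by \cref{kanextful}(1), whence $j^*(u_!Y) \cong \tilde{i}^* p'^*(u'_!Y)$.

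The decisive step --- and the unique place where the adjointness hypothesis enters --- is \cref{kanextful}(3): $\tilde{i}$ having a left adjoint makes it a right adjoint, hence homotopy final, so the canonical mate $\colim_\llcorner\tilde{i}^* \to \colim_{(B-b_0)_{/b_0}}$ is an isomorphism; applied to $p'^*(u'_!Y)$ it splices the two formulas above into the required equivalence. The main obstacle I expect is not conceptual but rather bookkeeping: verifying that the isomorphism thus assembled genuinely coincides with the counit of $(i_\llcorner)_!\dashv(i_\llcorner)^*$ evaluated at $(1,1)$, which amounts to a mate-compatibility check forced by the universal properties of the slice squares involved. Part (2) then follows by applying (1) to $\D\op$, swapping $\llcorner\leftrightarrow\urcorner$, colimits $\leftrightarrow$ limits and left $\leftrightarrow$ right Kan extensions.
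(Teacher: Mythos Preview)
The paper does not give its own proof of this proposition: it is stated with a citation to \cite[Prop.~3.10]{derptderstder} and used as a black box. Your argument is correct and is essentially the proof given in the cited reference---factoring $u$ through the inclusion $v\colon B-i(1,1)\hookrightarrow B$, applying (Der4) to both $i_\llcorner$ and $v$, and invoking homotopy finality of $\tilde{i}$ to compare the two colimits. The bookkeeping concern you flag (that the assembled isomorphism is the counit component) is real but routine: it follows from the functoriality of canonical mates under pasting, which is exactly how the argument is organised in Groth's paper.
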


\begin{remark}
    By \cref{kanextful},  we have that if $u\colon A\to B$ is a fully faithful functor, then the counit $\epsilon\colon u^\ast u_\ast\to \id$ in an isomorphism. Then an object $X \in \D(B)$ belongs to the essential image of $u_*$ if and only if $\eta(X): X \to u_*u^*(X)$ is an isomorphism. Dually, the same property holds for $u_!$. As a consequence, by \cref{ccsq}, when $B \setminus \essIm(u)$ contains only $i(0,0)$, we have that the essential image of $u_*$ (respectively of $u_!$) consists of all the objects $X$ such that $i^*(X)$ is cartesian (respectively cocartesian).
\end{remark}

\oneline

\begin{defn}
A pointed derivator is \textbf{stable} if the classes of cartesian squares and cocartesian squares coincide. Recall that such squares are then called \textbf{bicartesian}.
\end{defn}
\noindent Different characterizations of stable derivators are given in \cite[Theorem~7.1]{gps:mayer} and \cite[Corollary~8.13]{gst:basic}.

 \begin{proposition}[{\cite[Proposition~4.3]{derptderstder}}]
 If $\D$ is a stable derivator then so are the shifted and opposite derivators $\D^B$ and $\D\op$.
 \end{proposition}
  
\begin{example}\label{egs:stable} The following are examples of stable derivators which are useful for this work.
\begin{enumerate}
\item Let $\mathcal{G}$ be a Grothendieck category. We have a stable combinatorial model category for complexes over a Grothendieck category and quasi-isomorphisms as weak equivalences \cite[Example~3.11]{sto:dercot}. Recall that the derived category $\mathrm{D}(\mathcal{G})$ is the localization of the category of chain complexes at the class of quasi-isomorphisms. Then, by \cref{exder} and \cite[Proposition~1.30]{derptderstder} the \textbf{derivator associated to a Grothendieck  category} is the $2$-functor \[\D_{\mathcal{G}}\colon A \mapsto \mathrm{D}(\mathcal{G}^A)\]  and, since the model category is stable, the derivator is such.
In particular, if we choose $\mathcal{G}$ to be the module category of an algebra over a quiver, we then use this example to enhance \cref{dercat}.
\item There are many Quillen equivalent stable model categories of spectra such that the homotopy category is the stable homotopy category $\mathcal{SHC}$. The homotopy derivator $\cSp$ associated to any of these model categories is stable. We will refer to it as the \textbf{derivator of spectra}, it will play an essential role in \cref{sec:tilting}.
\item  Homotopy derivators of stable $\infty$-categories and stable model categories are stable \cite{gps:mayer}.

More examples can be found in \cite{gst:basic}.
\end{enumerate}
\end{example}

The following result justifies our interest in the notion of stable derivators. 

\begin{theorem}
    [{\cite[Theorem~4.16]{derptderstder}}]
    If $\D$ is a strong and stable derivator then its underlying category $\D(e)$ is triangulated. 
\end{theorem}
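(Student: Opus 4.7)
The plan is to produce on $\D(e)$ a suspension autoequivalence together with a class of distinguished triangles satisfying \textup{(T1)--(T4)}, following the classical pattern developed by Franke \cite{franke96} and Keller \cite{keller91}.

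First, I would construct the suspension. Given $X \in \D(e)$, I right-extend by zero along the sieve $(0,0)\colon e \to \llcorner$ (available by \cref{extbyzero}, since $\D$ is pointed) to obtain a diagram on $\llcorner$ with value $X$ at $(0,0)$ and $0$ at the other two vertices; applying $(i_\llcorner)_!\colon \D(\llcorner) \to \D(\square)$ then produces a cocartesian square, and its value at $(1,1)$ is by definition $\Sigma X$. Dually, using the cosieve $(1,1)\colon e \to \urcorner$ and $(i_\urcorner)_\ast$, one obtains $\Omega$. Stability forces the defining squares to be simultaneously cartesian, which is precisely what is needed to conclude that the unit and counit of $\Sigma \dashv \Omega$ are isomorphisms; hence $\Sigma$ is an autoequivalence of $\D(e)$.

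Next, to turn a morphism $f \colon X \to Y$ of $\D(e)$ into a triangle, strongness enters: the underlying diagram functor $\mathrm{dia}_{[1]}\colon \D([1]) \to \D(e)^{[1]}$ is essentially surjective, so $f$ admits a coherent lift $\tilde f \in \D([1])$. Viewing $[1]$ as the top row of $[2] \times [1]$, one extends $\tilde f$ by iterated zero-extensions and left Kan extensions to a $[2]\times[1]$-shaped object whose top row is $X \to Y \to 0$ and whose bottom row is $0 \to Z \to \Sigma X$, with both unit sub-squares cocartesian (hence bicartesian by stability). The underlying sequence $X \to Y \to Z \to \Sigma X$ in $\D(e)$ is then declared a \emph{distinguished triangle}, and arbitrary distinguished triangles are those isomorphic to one of this form.

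Finally, I would verify the axioms. Axiom \textup{(T1)} is built into the construction. Axiom \textup{(T2)} (rotation) follows by shifting the $[2]\times[1]$-grid one step and identifying the rotated sequence with the triangle produced from $Y \to Z$, invoking uniqueness of bicartesian completions. For axiom \textup{(T3)} and the octahedral axiom \textup{(T4)}, strongness is again decisive: a commutative square of morphisms of triangles in $\D(e)$ is lifted to a coherent $\square$-shaped object via $\mathrm{dia}_{\square}$, after which functoriality of Kan extensions yields the required morphism of triangles; for \textup{(T4)}, one lifts a composable pair $X \to Y \to Z$ to a coherent $[2]$-diagram, extends to a $[2]\times[2]$-grid of bicartesian squares, and reads the octahedron off its boundary. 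I expect the main obstacle to lie precisely here: the delicate interplay between strongness and stability must be handled carefully, since strongness provides the passage from incoherent to coherent diagrams on which the universal constructions are functorial, while stability, via \cref{ccsq} and the pasting lemma, guarantees that the resulting $3\times 3$ grids compose correctly. A secondary subtle point is that strongness yields full but not faithful $\mathrm{dia}$; however, any two coherent lifts of the same commutative square produce triangles related by a morphism of triangles, which is exactly what \textup{(T3)} asserts.
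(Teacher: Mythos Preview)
The paper does not actually prove this theorem: it is merely quoted, with citation, from \cite[Theorem~4.16]{derptderstder}, and followed only by a brief heuristic remark explaining why bicartesian squares with a zero corner produce cones. There is therefore no proof in the paper to compare against.

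That said, your sketch is essentially the standard argument carried out in the cited reference (and going back to Franke and Keller, as you note). A small point of precision: when you invoke ``$\mathrm{dia}_\square$'' for \textup{(T3)}, what is actually used is the strongness axiom in its parametrized form $\D(A\times[1])\to\D(A)^{[1]}$ with $A=[1]$, which lifts a morphism between two coherent arrows to a coherent square; similarly, the lift of a composable pair for \textup{(T4)} is obtained in two steps via strongness for $[1]$ rather than directly via a hypothetical strongness for $[2]$. With that adjustment your outline is correct.
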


\noindent The additional hypothesis of being \textbf{strong} requires that, for any $A \in \cCat$, the partial underlying diagram functor \[\D(A \times [1]) \to D(A)^{[1]}\]

\noindent is full and essentially surjective. The example of derivators considered in the present work, such as homotopy derivators of model categories and $\infty$-categories, are all strong. This concept is then not essential for our purposes.

\noindent Let us describe the triangulated structure of the underlying category, provided the derivator is strong and stable. Given a bicartesian square $X \in \D^{\square}$, if $X_{(0,1)}$ is the zero object then  \[X_{(0,0)} \to X_{(1,0)} \to X_{(1,1)} \to \] is a triangle in $\D(e)$. In particular, $X_{(1,1)}$ is the cone of the morphism $X_{(0,0)} \to X_{(1,0)}$.

\medskip

\noindent We conclude this subsection with the following definition which we need to state \cref{mainthm}.

\begin{defn}
    A morphism of derivators is \textbf{right exact} if it preserves initial objects and cocartesian squares. Dually we define \textbf{left exact} morphism. A morphism which is both right and left exact is called  \textbf{exact}. 
\end{defn}
\noindent For stable derivators these three notions clearly coincide. 

\subsection{Total cofiber construction}

Let us recall the \textbf{total cofiber construction} because this is the key idea behind the proof of \cref{mainthm}. More details can be found in \cite{deriteratedcones}.

\begin{construction}\label{tcof}
Consider $\llcorner=\square-\{(1,1)\}$ the full subcategory of the square obtained by removing the final object and consider the category $\tilde{K}^3_{1,2}$ in the diagram below. This is the cocone on the square obtained by adjoining a new terminal object $(2,1)$.
\[
\begin{tikzcd}
                            &                    & {(2,1)} \\
{(0,1)} \arrow[r]           & {(1,1)} \arrow[ru] &         \\
{(0,0)} \arrow[u] \arrow[r] & {(1,0)} \arrow[u]  &        
\end{tikzcd}.
\]

\noindent Associated to this category are the fully faithful inclusions of the source and target square 
\begin{equation}\label{eq:t-s-squares}
s=s_\llcorner\colon\square\to \tilde{K}^3_{1,2}\qquad\text{and}\qquad t=t_\llcorner\colon\square\to \tilde{K}^3_{1,2},
\end{equation}

\noindent where the image of $s$ is given by all objects except $(2,1)$ and the image of $t$ is given by all objects except $(1,1)$.
\end{construction}

\begin{proposition}[{\cite[Proposition~2.2]{deriteratedcones}}]\label{prop:cocart-cocone}
Let $\D$ be a derivator and let $s,t\colon\square\to \tilde{K}^3_{1,2}$ be the inclusions of the source and target squares.
\begin{enumerate}
\item The morphism $t_!\colon\D^\square\to\D^{\tilde{K}^3_{1,2}}$ is fully faithful and $Y\in\D^{\tilde{K}^3_{1,2}}$ lies in the essential image of~$t_!$ if and only if the source square $s^\ast Y$ is cocartesian.
\item A square $X\in\D^\square$ is cocartesian if and only if the canonical comparison map
\begin{equation}\label{eq:comp-map}
\mathrm{can}=\mathrm{can}(X)\colon t_!(X)_{(1,1)}\to t_!(X)_{(2,1)}.
\end{equation}
is an isomorphism.
\end{enumerate}
\end{proposition}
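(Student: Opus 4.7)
The plan is to observe that $t\colon\square\to\tilde{K}^3_{1,2}$ is the inclusion of a full sub-poset whose image omits only the object $(1,1)$, so $t$ is fully faithful. By \cref{kanextful}(1), $t_!$ is then fully faithful, and the essential image of $t_!$ consists of those $Y\in\D^{\tilde{K}^3_{1,2}}$ for which the counit $\epsilon\colon t_!t^\ast Y\to Y$ is an isomorphism. Combining the unit isomorphism $t^\ast t_!\cong\id$ with a triangle identity shows that $\epsilon$ is automatically an isomorphism at every object in the image of $t$, so by (Der2) the question reduces to whether $\epsilon_{(1,1)}$ is an isomorphism.

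To identify $\epsilon_{(1,1)}$ I apply the slice-square axiom (Der4) to $t$ at $b=(1,1)$. Inspecting $\tilde{K}^3_{1,2}$, the objects admitting a morphism to $(1,1)$ are precisely $(0,0),(0,1),(1,0)$ (the terminal vertex $(2,1)$ does not), so the slice category $(t/(1,1))$ is canonically isomorphic to $\llcorner$ and the projection to $\square$ coincides with the sieve $i_\llcorner\colon\llcorner\hookrightarrow\square$. Homotopy exactness of the slice square then gives $(t_!t^\ast Y)_{(1,1)}\cong\colim_\llcorner\bigl(i_\llcorner^\ast s^\ast Y\bigr)$ and identifies $\epsilon_{(1,1)}$ with the canonical comparison $\colim_\llcorner(i_\llcorner^\ast s^\ast Y)\to(s^\ast Y)_{(1,1)}$, which by definition is an isomorphism exactly when $s^\ast Y$ is cocartesian. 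This proves (1).

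For (2), I reuse the two computations for $X\in\D^\square$: full faithfulness of $t_!$ gives $t_!(X)_{(2,1)}=(t^\ast t_!X)_{(1,1)}\cong X_{(1,1)}$, and the slice-square argument above gives $t_!(X)_{(1,1)}\cong\colim_\llcorner(i_\llcorner^\ast X)$. The structural morphism $t_!(X)_{(1,1)}\to t_!(X)_{(2,1)}$ is $t_!X$ applied to the unique arrow $(1,1)\to(2,1)$ in $\tilde{K}^3_{1,2}$; equivalently it is the component at $(1,1)\in\square$ of the natural transformation $\sigma\colon s\Rightarrow t$ given by the identity at $(0,0),(0,1),(1,0)$ and by $(1,1)\to(2,1)$ at $(1,1)$. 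Under the identifications above this coincides with the canonical comparison $\colim_\llcorner(i_\llcorner^\ast X)\to X_{(1,1)}$, whose being an isomorphism is the definition of $X$ cocartesian. The one bookkeeping subtlety will be exactly this last identification: computing the two endpoints from (Der4) and full faithfulness is routine, but matching the structural arrow of $t_!X$ with the canonical cocartesian comparison requires pasting the mate of the slice-square 2-cell against $\sigma$. Once that is in place, both parts follow at once.
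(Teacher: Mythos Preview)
The paper does not give its own proof of this proposition; it is quoted verbatim as \cite[Prop.~2.2]{deriteratedcones} and used as a black box. So there is nothing in this paper to compare your argument against.

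That said, your approach is the standard one and is correct. The key observations---that $t$ omits only $(1,1)$, that $(t/(1,1))\cong\llcorner$ with projection $i_\llcorner$, and that $t\circ i_\llcorner=s\circ i_\llcorner$---are exactly what one needs, and reducing the essential-image condition to the single counit component $\epsilon_{(1,1)}$ via the triangle identity and (Der2) is clean. This is also precisely the mechanism behind \cref{ccsq}, which the paper again cites without proof; your argument effectively re-derives the special case of that proposition where $B=\tilde{K}^3_{1,2}$ and $i$ is the source square.

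The one place you flag as a ``bookkeeping subtlety'' is the only point deserving care: identifying the structural arrow $(t_!X)_{(1,1)}\to(t_!X)_{(2,1)}$ with the cocartesian comparison $\colim_\llcorner i_\llcorner^\ast X\to X_{(1,1)}$. Your sketch via the natural transformation $\sigma\colon s\Rightarrow t$ and mate-pasting is the right idea; concretely, one pastes the slice square for $t$ at $(1,1)$ against the square expressing $(2,1)^\ast t_!\cong(1,1)^\ast$ (from full faithfulness of $t_!$), and the resulting composite mate is exactly the map induced by $(1,1)\to(2,1)$. This is routine once written out, so your proposal is complete in all essentials.
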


\begin{defn}[{\cite[Definition~2.4]{deriteratedcones}}]\label{defn:tcof}
Let $\D$ be a pointed derivator. The \textbf{total cofiber} of $X\in\D^\square$ is the cone of the comparison map~\eqref{eq:comp-map}. In formulas, we set
\[
\mathsf{tcof}(X)=C(\mathrm{can}(X))\in\D,
\]
where $C: \D^{[1]} \to \D$ is the cone morphism (see \cite[Subsection 3.3]{derptderstder}). The definition of the \textbf{total fiber} $\mathsf{tfib}(X)\in\D$ is dual.
\end{defn}

\section{Strict full subderivators}\label{strictfullsubder}

In this section we introduce the new notion of strict full subderivator. Strict full subderivators will allow us to enhance the derived category $\mathrm{D}(kA_n/I)$ (cf. \ref{derivedeq}), so in particular to enhance the relations given by the ideal $I$.

\noindent Let us start by recalling the already existing definition of full subprederivator. 

\begin{defn}
Let $\D$ be a derivator, a \textbf{full  subprederivator} $\D'$ of $\D$ is a full sub-$2$-functor of $\D$ \ie  it is a $2$-functor $\D' \colon\cCat\op\to\cCAT$ such that $\D'(I)\subseteq \D(I)$ is a full subcategory for any $I \in \cCat$. In particular, if $u\colon A\to B$ is a functor, then the restriction $(u')^*= \D '(u)\colon\D'(B) \to \D'(A)$ is given by $(u^*)_{\vert \D'(B)}$. In other words, the diagram
\[
\begin{tikzcd}
\D'(A) \arrow[rr, "i_A", hook]                       &  & \D(A)                  \\
\D'(B) \arrow[u, "(u')^*"] \arrow[rr, "i_B", hook] &  & \D(B) \arrow[u, "u^*"]
\end{tikzcd}\]
 needs to be commutative. Here $i_A$ and $i_B$ are the inclusion functors. 
\end{defn}

\noindent The definition of \textbf{full subderivator} naturally follows: namely, it is a full subprederivator which is also a derivator. Given $\D'$ a full subderivator of $\D$, by (Der3) for every restriction functor $(u')^*$, there exist both the left $(u')_!$ and the right $(u')_*$ Kan extensions. What cannot be guaranteed is that these Kan extensions are compatible with the Kan extensions of $\D$. In order to ensure this compatibility condition, we introduce the notion of a \textit{strict full subderivator} \ie  
a full subderivator which preserves Kan extensions.

\begin{defn} 
A \textbf{strict full subderivator} $\D'$ of a derivator $\D$ is a full subprederivator satisfying (Der$1$), (Der$3$) and such that the left and right Kan extensions are given by the left and right Kan extensions of $\D$ restricted to $\D'$. Namely, if $u\colon A\to B$ is a functor, then $(u')_!,(u')_*\colon\D'(A) \to \D'(B)$ are given by $u_{!_{\vert \D'(A)}},\, u_{*_{\vert \D'(A)}}$ respectively.
\end{defn}

For a strict full subderivator, we then have the following commutative diagram 
\[
\begin{tikzcd}
\D'(A) \arrow[d, "(u')_*", bend left, shift left=6] \arrow[rr, "i_A", hook] \arrow[d, "(u')_!"', bend right, shift right=7] &  & \D(A) \arrow[d, "u_*", bend left, shift left=6] \arrow[d, "u_!"', bend right, shift right=6] \\
\D'(B) \arrow[u, "(u')^*"] \arrow[rr, "i_B", hook]                                                                          &  & \D(B) \arrow[u, "u^*"]                                                                      
\end{tikzcd},
\]
where $i$ is the inclusion functor, the restrictions strictly commute and the Kan extensions commute as well.

\begin{proposition}
Any strict full subderivator $\D' \subseteq \D$ is a derivator. 
\end{proposition}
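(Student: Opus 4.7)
The plan is to verify the remaining axioms (Der2) and (Der4), since (Der1) and (Der3) are assumed by the definition of a strict full subderivator. The entire argument is driven by the compatibility diagram in the definition: both restrictions and Kan extensions of $\D'$ are, up to canonical isomorphism, the corresponding functors of $\D$ restricted to $\D'$, and the inclusion $i_A\colon \D'(A)\hookrightarrow \D(A)$ is fully faithful. So the strategy is simply to transport each axiom from $\D$ to $\D'$ through this fully faithful inclusion.

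For (Der2), let $f\colon X\to Y$ be a morphism in $\D'(A)$. Since $i_A$ is fully faithful, $f$ is an isomorphism in $\D'(A)$ if and only if $i_A(f)$ is an isomorphism in $\D(A)$. For every $a\in A$, the evaluation $(a')^\ast$ in $\D'$ coincides with the restriction of $a^\ast$ along $i_A$, hence $f_a$ (computed in $\D'$) is identified with $i_e(f_a)$, and the latter is an isomorphism in $\D(e)$ iff $f_a$ is an isomorphism in $\D'(e)$. Applying (Der2) for $\D$ then gives (Der2) for $\D'$.

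For (Der4), I would first observe that the adjunction units and counits for $(u')_!\dashv(u')^\ast$ and $(u')^\ast\dashv (u')_\ast$ in $\D'$ are, up to the canonical isomorphisms arising from the adjunctions, the units and counits of $u_!\dashv u^\ast$ and $u^\ast\dashv u_\ast$ in $\D$ restricted to objects of $\D'$. This is just the uniqueness of adjoints together with the commutation of restrictions and Kan extensions with the inclusion. Consequently, given any natural transformation $\alpha\colon up\to vq$, the canonical mate
\[
q_!p^\ast \to v^\ast u_!
\]
for $\D'$ is (identified via $i$ with) the restriction to $\D'$ of the corresponding canonical mate for $\D$. Applying this to a slice square \eqref{eq:Der4}, the mate in $\D$ is an isomorphism since $\D$ satisfies (Der4), and the inclusion $i_e$ reflects isomorphisms (by fullness), so the mate in $\D'$ is an isomorphism as well. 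The dual mate for right Kan extensions is handled identically.

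The only delicate point is the bookkeeping in the last step: one must check that the canonical mate of $\D'$, formed from its own units/counits and restrictions, agrees under $i$ with the canonical mate of $\D$. This is essentially formal once one invokes the uniqueness of adjoints, but it is the place where the word \emph{strict} in \textbf{strict full subderivator} is actually used: without the hypothesis that $(u')_!$ and $(u')_\ast$ are computed as the Kan extensions of $\D$ restricted to $\D'$, one could not identify the two mates and this step would fail. With that identification in hand, (Der4) for $\D'$ is an immediate consequence of (Der4) for $\D$.
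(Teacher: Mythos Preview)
Your proof is correct and follows essentially the same approach as the paper: both verify (Der2) and (Der4) by transporting the axioms from $\D$ to $\D'$ via the fully faithful inclusion and the compatibility of restrictions and Kan extensions. The paper's treatment of (Der4) is a concrete diagram chase starting from $\pi_! p^\ast i_A \xrightarrow{\sim} b^\ast u_! i_A$ and descending via vertical equivalences, while you phrase the same identification more conceptually through uniqueness of adjoints; the content is identical.
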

\begin{proof}
Let us show that the two additional axioms are satisfied.

\begin{itemize}

\item[(Der$2$)] Consider a small category $A \in \cCat$, and let $f:X \to Y$ be a morphism in $\D'(A)$. If $f$ is an isomorphism then its image under the evaluation functor $(a')^*:\D'(A)\to\D'(e)$ is also an isomorphism, as any functor preserves isomorphisms. Hence $(a')^*(f)=f_a :X_a \to Y_a$ is an isomorphism. On the other hand, for any $a \in A$, by definition we have the following commutative diagram
\[\begin{tikzcd}
\D'(A) \arrow[rr, "i_A", hook] \arrow[d, "(a')^*"'] &  & \D(A) \arrow[d, "a^*"'] \\
\D'(e) \arrow[rr, "i_e", hook]                      &  & \D(e)                  
\end{tikzcd}.
\]
If $(a^*)'(f)$ is an isomorphism then also $i_e(a^*)'(f)=a^*(i_A(f))$ is an isomorphism. Being a derivator, $\D$ satisfies (Der$2$), so $i_A(f)$ and hence $f$ are isomorphisms.
\item[(Der$4$)] By the property (Der$4$) of the derivator $\D$, we have an isomorphism $\pi_!p^* \xrightarrow{\sim} b^*u_!$. By composing with the inclusion functor, we get the map $\pi_!p^*i_A \xrightarrow{\sim} b^*u_!i_A$ which is again an isomorphism. We have the following diagram
\[ \begin{tikzcd}
\pi_!p^*i_A \arrow[rr, "\sim"] \arrow[d]        &  & b^*u_!i_A \arrow[d]    \\
\pi_!i_{u/b}(p^*)' \arrow[d] \arrow[rr, "\sim"] &  & b^*i_B(u_!)' \arrow[d] \\
i_e(\pi_!)'(p^*)' \arrow[rr, "\sim"]            &  & i_e(b^*)'(u_!)'       
\end{tikzcd} \]
where all the vertical arrows are equivalences. The last isomorphism implies the desired one, which is $(\pi_!)'(p^*)' \rightarrow (b^*)'(u_!)'$.
\end{itemize}
\end{proof}

In this paper we focus on working with strict full subderivators of shifted derivators. The following construction provide us with a large class of examples of those.

\begin{construction}\label{constr:strictfullsubder}
    Let $\D$ be a derivator, $A \in \cCat$ and consider the following data:
\begin{itemize}
    \item[(1)] A functor $j:C \to A \in \cCat$.
    \item[(2)] Two fully faithful inclusions  $k_l: B_l \to C$ and $k_r: B_r \to C$ such that the essential images of the respectively left and right Kan extensions coincide, 
    \begin{equation}\label{essIm}
        \essIm(k_l)_!=\essIm(k_r)_* \end{equation}
    in $\D(C)$.
\end{itemize}
By \cref{kanextful}, the equality \eqref{essIm} implies that
\begin{equation}
    \essIm(k_l \times \id_I)_!=\essIm(k_r \times \id_I)_*  \in \D(C \times I),
\end{equation}
for any $I \in \cCat.$ Then let us define
\begin{equation}\label{eq:strictfull}
    \E(I)\coloneqq\left\{X \in \D^A(I)=\D(A \times I): (j \times \id)^*(X) \in \essIm(k_l \times \id)_!  
     \right\} \subseteq \D^A(I).
\end{equation}
\end{construction}

We now prove that $\E(I)$ are actually values of a strict full subderivator $\E \subseteq \D^A$. 
\begin{proposition}\label{prop:fullsubder}
    Let $\D$ be a derivator, $A \in \cCat$ and $j, k_l$ and $k_r$ as in \cref{constr:strictfullsubder}. Then there is a strict full subderivator $\E \subseteq \D^A$ such that for any $I \in \cCat$, the category $\E(I)$ is given by \eqref{eq:strictfull}.
    \end{proposition}
\begin{proof}
 For simplicity, let us denote $(\id_A \times u)$ by $u_A$ for any $A \in \cCat$ and $(j \times \id_I),\, (k_l \times \id_I)$ respectively by $j_I,\, k^I$. First, let us prove that $\E$ is a subprederivator \ie  given a functor $u:J \to I$, let us verify that if $X \in \E(I)$ then $u^*_A(X) \in \E(J)$. We have the following commutative diagram
\[\begin{tikzcd}
\D^I(B_l) \arrow[rr, "u^*_{B_l}"] \arrow[d, "k^I_!"'] &  & \D^J(B_l) \arrow[d, "k^J_!"]  \\
{\D^I(C)} \arrow[rr, "{u^*_{C}}"]           &  & {\D^J(C)}                 \\
\D^I(A) \arrow[u, "j^*_I"] \arrow[rr, "u^*_A"]  &  & \D^J(A) \arrow[u, "j^*_J"']
\end{tikzcd},\]
where we identify $\D^B(A)$ with $\D^A(B)$ via the natural isomorphism $B \times A \cong A \times B$ for any $A,B\in\cCat$. By definition, it suffices to prove that if $X\in \D^I(A)$ is such that there exists $Y \in \D^I(B_l)$ and $j^*_I(X) \cong k^I_!(Y)$, then $u^*_A(X) \in \D^J(A)$ is such that $j^*_J(u^*_A(X)) \in \essIm k^J_!$. This is guaranteed by the following isomorphisms
\[ j^*_J(u^*_A(X))=u^*_{C}j^*_I(X)\cong u^*_{C}k^I_!(Y) \cong k^J_!u^*_{B_l}(Y),
\]
where the last isomorphism is given by \cref{kanextful}.
To prove (Der$1$), it is enough to consider the commutative diagram 
\[\begin{tikzcd}
\D^{\coprod_iI_i}(B_l) \arrow[rr, "F^{B_l}"] \arrow[d, "k_!^{\coprod_i l_i}"'] &  & \prod_i\D^{I_i}(B_l) \arrow[d, "\prod_i k_!^{I_i}"]  \\
{\D^{\coprod_iI_i}(C)} \arrow[rr, "{F^{C}}"]           &  & {\prod_i\D^{I_i}(C)}                \\
\D^{\coprod_iI_i}(A) \arrow[u, "j^*_{\coprod_i I_i}"] \arrow[rr, "F^A"]  &  & \prod_i\D^{I_i}(A) \arrow[u, "\prod_i j^*_{I_i}"']
\end{tikzcd},
\]
where $F^e, F^{[1]}$, and $F^A$ are equivalences coming from the property (Der$1$) for the derivator $\D$. Then, we conclude the proof by diagram chasing, as in the previous case.
(Der$3$) follows by proving that the Kan extensions are the ones in $\D$ restricted to $\E$. It can be verified again by diagram chasing, thank to \cref{kanextful}
\[\begin{tikzcd}
\D^I(B_l) \arrow[rr, "u^{B_l}_!"] \arrow[d, "k^I_!"'] &  & \D^J(B_l) \arrow[d, "k^J_!"]  \\
{\D^I(C)} \arrow[rr, "{u^{C}_!}"]           &  & {\D^J(C)}                 \\
\D^I(A) \arrow[u, "j^*_I"] \arrow[rr, "u^A_!"]  &  & \D^J(A) \arrow[u, "j^*_J"'].
\end{tikzcd}\]
Analogously, we can prove the same for the right Kan extension by redefining $k^I$ as $(k_r \times \id_I)$ and considering $k^I_*$ and $u_*$.
\end{proof}

\begin{proposition}
If $\D$ is a stable derivator then $\E$ is a stable strict full subderivator.
\end{proposition}
\begin{proof}
    Since the Kan extensions of $\E$ are the restrictions of the ones in $\D^A$, the conclusion follows directly. 
\end{proof}

\begin{notation} Let us fix some notation for the classes of examples of strict full subderivators, which are of particular relevance in our context. 
 \begin{itemize} \item[(1)] The strict full subderivator spanned by all the coherent diagrams where we require an arrow to be an isomorphism. Namely, consider
     \begin{itemize}
         \item the inclusion functor $j: [1] \to A$ that chooses a morphism in $A$,
         \item the functor $k_l: e \to [1]$ that chooses the initial object and  $k_r: e \to [1]$ that chooses the final object.
     \end{itemize} 
     For any $I\in \cCat$, we define 
     \[\E^{is}_A(I)\coloneqq\left\{X \in \D^A(I)=\D(A \times I): (j \times \id)^*(X) \in \essIm(k_l \times \id)_!  
     \right\}.\]
     
      \item [(2)]\label{vanish} Let $\D$ be a pointed derivator. The strict full subderivator spanned by all the coherent diagrams which vanish in one position. Namely, consider
      \begin{itemize}
          \item the functor which chooses an object in $A$, $j:e\to A$.
          \item the canonical functor $k_l=k_r: \varnothing \empty \to e$. 
      \end{itemize}
       For any $I\in \cCat$, we define
     \[\E^{va}_A (I)\coloneqq\{X \in \D^A(I)=\D(A\times I): (j\times \id)^*(X)\in \essIm(k_l \times \id)_! \}.\]

     \item[(3)] Let $\D$ be a stable derivator. The strict full subderivator spanned by all the coherent diagrams where we require a square to be bicartesian. Namely, consider 
     \begin{itemize}
         \item the functor which chooses a commutative square in A, $j: \square \to A$,
         \item the inclusion functors $k_l: \llcorner \to \square$, $k_r: \urcorner \to \square$.
     \end{itemize}
     For any $I\in \cCat$, we define
     \[\E^{bi}_A(I)\coloneqq\{X \in \D^A(I)=\D(A \times I): (j \times \id)^*(X)\in \essIm(k_l \times \id)_!\}. \]
 \end{itemize}
\end{notation}

\noindent In order to write the derivator whose underlying category is $D(kA_n/I)$ (see \eqref{derivedeq}) and to prove \cref{mainthm}, we need to define strict full subderivators with more than one vanishing position, isomorphism arrow or bicartesian square. The following proposition suggests that it is actually enough to define them as intersections.

\begin{proposition}\label{prop:intersection} Intersection of strict full subderivators is a strict full subderivator. In particular, this is true for  $\E^{va}_A$, $\E^{is}_A$ and $\E^{bi}_A$.
\end{proposition}
\begin{proof} Let $\D$ be a derivator and $\{\D_k\}_{ k\in K}$ a family of strict full subderivators of $\D$. Since all the Kan extensions and restriction functors are just those restricted from $\D$, we only need to prove $(Der 1)$. It holds because of the following equivalences: 
\[\bigcap_k \D_k ( \coprod_i I_i)\cong\bigcap_k \prod_i \D_k(I_i) \cong \prod_i\bigcap_k\D_k(I_i)\]
where $\D_k$ strict full subderivator of $\D$, for any $k$. The first equivalence holds because $(Der 1)$ for $\D$ restricts to $\D_k$ for any $k$. The second equivalence holds because the product commutes with intersection of full subcategories.
\end{proof}

 \section{Homotopical epimorphisms}\label{sec:homoepi}
Let $\D$ be a derivator and, as in the previous section, consider the functor \[k: e \to [1] \in \cCat\] which chooses the initial object in $[1]$. The properties of the left Kan extension make sure that the underlying incoherent diagram of an object $Y$ in the essential image of $k_!: \D(e) \to \D([1])$ is an isomorphism.

\noindent Consider a small category $A$, a coherent diagram $X \in \D(A)$, and the functor \[j: [1] \to A\]
which chooses a morphism in $A$. The map we chose through the functor $j: [1] \to A$ is an \textbf{isomorphism in the coherent diagram} if $j^*(X)$ is in the essential image of the left Kan extension $k_!: \D(e) \to \D([1])$.

\noindent Let now $u: A \to B$ be the localization functor in $\cCat$ inverting the morphism at which $j$ points. Then it is not difficult to observe that the essential image of $u^*: \D^B \to \D^A$ is contained in the strict full subderivator $\E_A^{is}$. The natural question is when $u^*:\D^B \to \E_A^{is}$ is an equivalence. The answer is: when $u^*$ is an homotopical epimorphism.

\begin{defn}[{\cite[Definition~3.8]{homotopicaliso}}]\label{defn:htpy-epi}
A functor $u\colon A\to B$ is an \textbf{homotopical epimorphism} if for every derivator $\D$, the restriction functor $u^\ast\colon\D(B)\to\D(A)$ is fully faithful.
\end{defn}

\begin{remark}\label{homoepisq}
By \cref{kanextful}, $u\colon A\to B$ is an homotopical epimorphism if and only if the square
\[
\xymatrix{
A\ar[r]^-u\ar[d]_-u&B\ar[d]^-\id\\
B\ar[r]_-\id&B
}
\]
is homotopy exact.
\end{remark}

\begin{example}
    Let $A=[2]$ and $B=[1]$. Let $\E_A^{is}$ be the strict full subderivator of $\D^A$ where we require the arrow $1 \to 2$ to be an isomorphism. If we consider the functor
\begin{align*}
u \colon A & \to B \\
0 & \mapsto 0\\
1,2 & \mapsto 1,
\end{align*}
then, $u^*: \D^B \to \E_A^{is}$ is fully faithful and then it is an homotopical epimorphism. 
    
\end{example}

\noindent To understand whether a given functor is an homotopical epimorphism, there are some criteria which we now illustrate. Homotopical epimorphisms and the following criteria are among the fundamental tools which will allow us to prove \cref{mainthm}.

\begin{proposition}[{\cite[Proposition~8.2]{homotopicaliso}}]\label{prop:detect}
Let $u\colon A\to B$ be essentially surjective, let $\D$ be a derivator, and let $u^\ast\colon \D^B\to \D^A$ be the restriction morphism. Let us assume further that $\E\subseteq\D^A$ is a full subprederivator such that
\begin{enumerate}
\item the essential image $\essIm(u^\ast)$ lies in $\E$, i.e., $\essIm(u^\ast)\subseteq \E\subseteq \D^A$, and
\item the unit $\eta\colon X\to u^\ast u_! X$ is an isomorphism for all $X\in\E$.
\end{enumerate}
Then $u^\ast\colon\D^B\to\D^A$ is fully faithful and $\essIm (u^\ast)=\E$. In particular, $\E$ is a derivator.
\end{proposition}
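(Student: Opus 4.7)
The plan is to establish the three assertions in sequence: fully faithfulness of $u^*$, the identification of its essential image with $\E$, and the fact that $\E$ is a derivator. The central observation is that $u^*$ is fully faithful if and only if the counit $\epsilon\colon u_!u^*\to \id$ of the adjunction $(u_!,u^*)$ is an isomorphism, so the task reduces to upgrading hypothesis (2) (an isomorphism of units) to an isomorphism of counits.

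First I would fix an arbitrary small category $I$ and work with the functor $u\times\id_I\colon A\times I\to B\times I$, which is still essentially surjective; by \cref{kanextful}(2), the parametrized Kan extension $u_!$ at level $I$ is just $(u\times\id_I)_!$, so we can reason uniformly. Given $Y\in\D^B(I)$, the restriction $u^*Y$ lies in $\essIm(u^*)\subseteq\E(I)$ by hypothesis (1), so hypothesis (2) yields that the unit $\eta_{u^*Y}\colon u^*Y\to u^*u_!u^*Y$ is an isomorphism. The triangle identity $u^*(\epsilon_Y)\circ\eta_{u^*Y}=\id_{u^*Y}$ then forces $u^*(\epsilon_Y)$ to be an isomorphism in $\D(A\times I)$.

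Next I would use essential surjectivity of $u\times\id_I$ together with (Der2) to promote this to an isomorphism of $\epsilon_Y$ itself: for any $(b,i)\in B\times I$, pick $a\in A$ with $u(a)\cong b$, so that $(\epsilon_Y)_{(b,i)}$ is isomorphic to $(u^*\epsilon_Y)_{(a,i)}$, which is already known to be an isomorphism in $\D(e)$. By (Der2) this shows $\epsilon_Y$ is an isomorphism in $\D^B(I)$, establishing fully faithfulness of $u^*$ at every level $I$. The identification $\essIm(u^*)=\E$ is then immediate: one inclusion is hypothesis (1), and for the other, any $X\in\E$ satisfies $X\cong u^*u_!X$ by hypothesis (2), hence lies in $\essIm(u^*)$.

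Finally, $u^*$ factors as $\D^B\xrightarrow{\simeq}\E\hookrightarrow\D^A$, exhibiting $\E$ as equivalent to the derivator $\D^B$ as a prederivator, so the four axioms of \cref{defn:derivator} transfer. The only genuinely delicate step is the second paragraph: the triangle identity by itself only controls $u^*(\epsilon_Y)$, and without essential surjectivity of $u$ one cannot descend this to an isomorphism of $\epsilon_Y$ on the nose; this is precisely where the essential surjectivity hypothesis pays off, and it is the main (though mild) obstacle to watch out for.
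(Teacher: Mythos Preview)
The paper does not supply its own proof of this proposition; it is quoted verbatim from \cite[Prop.~8.2]{homotopicaliso} and used as a black box in \cref{sec:homoepi} and in the proof of \cref{mainthm}. So there is nothing in the present paper to compare your argument against.

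That said, your proof is correct and is exactly the standard argument one would expect (and, as far as I recall, essentially the argument in \cite{homotopicaliso}). The logic is clean: hypothesis~(1) feeds $u^\ast Y$ into hypothesis~(2), the triangle identity converts the unit isomorphism into an isomorphism $u^\ast(\epsilon_Y)$, and then essential surjectivity of $u$ together with (Der2) lets you test $\epsilon_Y$ objectwise and conclude it is invertible. Your remark that essential surjectivity is precisely what bridges the gap between $u^\ast(\epsilon_Y)$ and $\epsilon_Y$ is the right diagnosis of where the hypothesis is used. The identification $\essIm(u^\ast)=\E$ and the transfer of the derivator axioms along the resulting equivalence $\D^B\simeq\E$ are then routine, as you say.

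One small cosmetic point: in your final paragraph you might be slightly more explicit that the equivalence $\D^B\xrightarrow{\simeq}\E$ is compatible with restriction functors (since $u^\ast$ is a morphism of prederivators), so that (Der3) and (Der4) really do transfer; but this is implicit in the phrase ``equivalent as a prederivator'' and causes no real trouble.
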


\begin{construction}({\cite[Construction.~8.4]{homotopicaliso}})\label{con:colim-mor}
Let $\D$ be a derivator, $A\in\cCat$ and let $a\in A$. Associated to the square
\[
\xymatrix{
e\ar[r]^-a\ar[d]&A\ar[d]^-{\pi_A}\\
e\ar[r]&e
}
\]
there is the canonical mate 
\begin{equation}\label{eq:can-colim-map}
a^\ast\to\colim_A.
\end{equation}
As a special case relevant in later applications, given a functor $u\colon A\to B$ and $a\in A$ we consider the functor $p\colon (u/ua)\to A$. Whiskering the mate \eqref{eq:can-colim-map} in the case of $(a,\id\colon ua\to ua)\in(u/ua)$ with $p^\ast$ we obtain a canonical map
\begin{equation}\label{eq:lkan-colim-map}
a^\ast=(a,\id_{ua})^\ast p^\ast\to \colim_{(u/ua)}p^\ast.
\end{equation} 
\end{construction}

\begin{lemma}[{\cite[Lemma.~8.7]{homotopicaliso}}]\label{lem:detect-Kan-to-colim}
Let $\D$ be a derivator, $u\colon A\to B$, and $a\in A$. The component of the unit $a^\ast\eta\colon a^\ast\to a^\ast u^\ast u_!$ is isomorphic to \eqref{eq:lkan-colim-map}. In particular, $\eta_a$ is an isomorphism if and only if this is the case for \eqref{eq:lkan-colim-map}.
\end{lemma}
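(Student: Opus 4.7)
The plan is to realise the unit component $a^\ast\eta$ as the canonical mate of a horizontally pasted square, one factor of which is the slice square (homotopy exact by (Der4)) and the other factor of which recovers \eqref{eq:lkan-colim-map}.

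First I would consider the two strictly commutative squares
\[
\xymatrix{
e \ar[r]^-{(a,\id_{ua})} \ar[d]_-\id & (u/ua) \ar[d]^-\pi \ar[r]^-p & A \ar[d]^-u \\
e \ar[r]_-\id & e \ar[r]_-{ua} & B
}
\]
equipped with the identity $2$-cell on the left and the canonical $2$-cell $u\circ p\to ua\circ\pi$ of the slice square on the right. Their horizontal pasting is the square with boundary $(a,\id_e,u,ua)$ carrying the identity $2$-cell. Unwinding the formula \eqref{eq:hoexmate1'} for this outer square, since the $2$-cell is the identity and the vertical arrows $\id_e$ produce identity Kan extensions, the formula collapses via the triangle identity to the map $a^\ast\eta\colon a^\ast\to a^\ast u^\ast u_!=(ua)^\ast u_!$.

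By the functoriality of canonical mates under horizontal pasting (\cite[Lemma~1.14]{derptderstder}), the mate of the outer square is the composite of the mates of the two inner squares. The right inner square is the slice square, so (Der4) gives the isomorphism $\pi_! p^\ast\stackrel{\sim}{\to} (ua)^\ast u_!$. The left inner square is of the form of \cref{con:colim-mor} applied to the category $(u/ua)$ at its object $(a,\id_{ua})$, so its mate is $(a,\id_{ua})^\ast\to\colim_{(u/ua)}=\pi_!$; whiskering on the right with $p^\ast$ and using $p(a,\id_{ua})=a$ yields precisely the map \eqref{eq:lkan-colim-map}.

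Combining these, $a^\ast\eta$ factors as the canonical map \eqref{eq:lkan-colim-map} followed by the slice-square isomorphism, which proves the lemma. The only delicate step is the explicit identification of the mate of the outer square with $a^\ast\eta$ itself; this is a routine consequence of the triangle identity for $(u_!,u^\ast)$ combined with the fact that the outer $2$-cell is the identity, so I do not expect any serious obstacle beyond careful bookkeeping of the whiskerings.
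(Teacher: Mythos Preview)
The paper does not supply its own proof of this lemma; it simply cites it from \cite[Lemma~8.7]{homotopicaliso}. Your argument is correct and is in fact the standard one: realise the outer square
\[
\xymatrix{
e \ar[r]^-{a} \ar[d]_-{\id} & A \ar[d]^-{u} \\
e \ar[r]_-{ua} & B
}
\]
as the horizontal pasting of the small square at $(a,\id_{ua})$ with the slice square, identify the mate of the outer square with $a^\ast\eta$ via the triangle identity, and then use functoriality of mates under pasting together with (Der4). This is exactly the approach taken in the cited reference, so there is nothing to add.
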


\noindent We will later apply the previous lemma to functors $v\colon C \to u/u(a)$ where $C \in \cCat$  and $ u/u(a)$ is a slice category which admits a terminal object. For this purpose we collect the following result.

\begin{lemma}[{\cite[Lemma.~8.8]{homotopicaliso}}] \label{lem:detect-final}
Let $u\colon A\to B$ be a functor in $\cCat$ and let $a\in A$.

If $A$ admits a terminal object $\infty$, then the map $a^\ast\to\colim_A$ \eqref{eq:can-colim-map} is naturally isomorphic to $a^\ast\to\infty^\ast$.
\end{lemma}

\begin{warning} Not every localization at a morphism is a homotopical epimorphism. 
    Consider the following poset, which we denote by $B$

\[
\begin{tikzcd}
a \arrow[d] \arrow[rd] & b \arrow[ld] \arrow[d] \\
c                      & d                     
.\end{tikzcd}\]

Let $\E_B^{is}$ be the strict full subderivator of $\D^{B}$ where we require the arrow $a \to c$ to be an isomorphism. We want to check whether the following functor is an homotopical epimorphism

\begin{equation*}
\begin{tikzcd}
a \arrow[d, Rightarrow] \arrow[rd] & b \arrow[d] \arrow[ld] & {} \arrow[r, "v", shift right=9] & {} & a \arrow[d] \arrow[rd]               & b \arrow[ld] \arrow[d] \\
c                                  & d                      &                                    &    & c \arrow[u, bend left, shift left=2] & d.                     
\end{tikzcd}
\end{equation*}

\noindent Here the double line arrow denotes the isomorphism in the coherent diagram and the target category is the localization of $B$ at the morphism $ a \to c$, we denote it by $B_{a\sim c}$. 

\noindent This functor is surjective on objects and $\essIm(v^\ast) \subseteq \E_B^{is}$. By \cref{prop:detect}, we then only need to check that the unit   $\eta\colon X\to v^\ast v_! X$ is an isomorphism for all $X\in \E_B^{is}$. 

\noindent Thanks to (Der$2$), it suffices to check that the map 
 \[i^*\eta\colon i^*X \to i^*v^*v_!X\]
 is an isomorphism for every object $i\in B$. By Lemma \ref{lem:detect-Kan-to-colim} we equivalently show that the map
 \begin{equation}\label{iso}
     i^*X\to \colim_{(v/v(i))}p^*X
  \end{equation}
 is an isomorphism for every $i\in B$. Due to the shape of $B_{a\sim c}$, the only interesting case is when $i=d$. Indeed, the slice category $v/v(d)$ looks as $B$ and there is a counterexample for the derivator of any field $k$ 
\begin{equation}\label{eq:counterexample}
\begin{tikzcd}
0 \arrow[d] \arrow[rd] & k \arrow[ld] \arrow[d] \\
0                      & 0                     
,\end{tikzcd}
\end{equation}
where the colimit of such a category is not, in general, isomorphic to the evaluation in $d$. Indeed, the colimit of \eqref{eq:counterexample} is $\Sigma k$, the suspension of $k$.
Consequently, $v$ is not an homotopical epimorphism and it shows then a case where the localization at a morphism is not an homotopical epimorphism.  
\end{warning}
\section{Enhancement of a quiver with relations}\label{sec:relenhancement}

 The main goal of the present work is to enhance \cref{dercat}, proving that this result occurs uniformly across stable derivators and it is then independent of coefficients. While we can easily get $\mathrm{D}(kA_n)$ as the underlying category of the shifted derivator $\D^{A_n}$, for $\D$ the derivator associated to the Grothendieck category $\Mod(k)$ (see Example \ref{egs:stable}), the derivator whose underlying category is $\mathrm{D}(kA_n/I)$ is not straightforwardly defined. For this reason we introduced strict full subderivators (\cref{strictfullsubder}). In particular, they allow to enhance in a direct and natural way the relations we have on the quiver so that, in this section, we are able to construct the derivator enhancement of $\mathrm{D}(kA_n/I)$.
\begin{defn}\label{def:A(n,2)}
Let $\D$ be a stable derivator and fix $n \in \N$, we consider the following subposet of $[n-2] \times [n-2]$:
\begin{equation}\label{A(n,2)}
    \adjustbox{scale=0.8}{\begin{tikzcd}
                              &                               &                                                                & { (n-3,n-2)} \arrow[r]                                         & {(n-2,n-2)}           \\
                              &                               & {(2,3)} \arrow[ru, no head, dashed] \arrow[r, no head, dashed] & {} \arrow[u] \arrow[r]                                         & {(n-2,n-3)} \arrow[u] \\
                              & {(1,2)} \arrow[ru] \arrow[rr] &                                                                & {(3,2)} \arrow[ru, no head, dashed] \arrow[u, no head, dashed] &                       \\
{(0,1)} \arrow[ru] \arrow[rr] &                               & {(2,1)} \arrow[ru] \arrow[uu]                                  &                                                                &                       \\
{(0,0)} \arrow[u] \arrow[r]   & {(1,0)} \arrow[ru] \arrow[uu] &                                                                &                                                                &                      
\end{tikzcd}}.
\end{equation}
In particular, we have that every square commutes. 
We call this shape $\tilde{A}(n,2)$ and we denote by $\D^{A(n,2)}$ the strict full subderivator of $\D^{\tilde{A}(n,2)}$ spanned by all coherent diagrams of the shape $\tilde{A}(n,2)$ which vanish at $(i,i+1)$, for $0\leq i \leq n-3$ (cf. \ref{vanish}).
\end{defn}

\begin{proposition}\label{derfield}
Let $\D$ be the stable derivator associated to Grothendieck category $\Mod(k)$, then $\D^{A(n,2)}$ is a derivator enhancement of $\mathrm{D}(kA_n/I)$. Moreover, we have an equivalence of derivators: 
\begin{equation*}
    \D_k^{A(n,2)} \cong \D_{kA_n/I},
    \end{equation*}
where the second derivator is an instance of Example \ref{egs:stable} for the Grothendieck category $\Mod(kA_n/I)$.
\end{proposition}

\begin{proof}
Recall that, by Example \ref{egs:stable}, the derivator of the Grothendieck category $\Mod (kA_n/I)$ is the homotopy derivator associated to the combinatorial model category for complexes over $\Mod(kA_n/I)$. Its underlying category is $\mathrm{D(kA_n/I)}$. In particular, we want to show that $\mathrm{D(kA_n/I)}$ is equivalent to the underlying category of $\D^{A(n,2)}$ which is the full subcategory of $\D(k\tilde{A}(n,2))$ satisfying the vanishing conditions imposed on the strict full subderivator. Namely, the vanishing conditions on $\D^{A(n,2)}$ imply that the complexes of vector spaces at positions $(i,i+1)$ for $0\leq i \leq n-3$ are acyclic in the underlying category. We denote these acyclic complexes by the letters 
 $\mathsf{A}_i$ for $1 \leq i \leq n-2$.
 
\smallskip

Let us now define the natural quasi-isomorphisms which give the desired derived equivalence. Given an object 
 \begin{equation}\label{2}
\begin{tikzcd}
                                                           &                                                            &                                                                     & \mathsf{A}_{n-2} \arrow[r, "z"]                          & Z                 \\
                                                           &                                                            & \mathsf{A}_3 \arrow[ru, no head, dashed] \arrow[r, no head, dashed] & {} \arrow[u] \arrow[r]                                   & Y \arrow[u, "l"'] \\
                                                           & \mathsf{A}_2 \arrow[ru, "v"] \arrow[rr, "r",pos=0.3] &                                                                     & X \arrow[ru, no head, dashed] \arrow[u, no head, dashed] &                   \\
\mathsf{A}_1 \arrow[ru, "u"] \arrow[rr, "p",pos=0.3] &                                                            & W \arrow[ru, "h"'] \arrow[uu, "s"',pos=0.3]                   &                                                          &                   \\
U \arrow[u, "m"] \arrow[r, "f"']                           & V \arrow[ru, "g"'] \arrow[uu, "q"',pos=0.3]          &                                                                     &                                                          &                  
\end{tikzcd}\end{equation}

\noindent in $\mathrm{D}(k ^{A(n,2)})$, via the obvious projections this is quasi-isomorphic to 

\begin{equation}\label{acyclic2}
\begin{tikzcd}
                                                           &                                                                          &                                                                                            & \mathsf{A}_{n-2} \arrow[r, "M_8"]                                                            & Z\oplus\mathsf{A}_{n-2}                                         \\
                                                           &                                                                          & \mathsf{A}_3 \arrow[ru, no head, dashed] \arrow[r, no head, dashed]                        & {} \arrow[u] \arrow[r]                                                                       & Y\oplus\mathsf{A}_{n-3}\oplus\mathsf{A}_{n-2} \arrow[u, "M_9"'] \\
                                                           & \mathsf{A}_2 \arrow[ru, "v"] \arrow[rr, "M_5",pos=0.3]               &                                                                                            & X\oplus\mathsf{A}_2\oplus\mathsf{A}_3 \arrow[ru, no head, dashed] \arrow[u, no head, dashed] &                                                                 \\
\mathsf{A}_1 \arrow[ru, "u"] \arrow[rr, "M_2",pos=0.3] &                                                                          & W\oplus\mathsf{A}_1\oplus\mathsf{A}_2 \arrow[ru, "M_7"'] \arrow[uu, "M_6"',pos=0.3] &                                                                                              &                                                                 \\
U \arrow[u, "m"] \arrow[r, "M_1"']                         & V\oplus\mathsf{A}_1 \arrow[ru, "M_4"'] \arrow[uu, "M_3"',pos=0.3] &                                                                                            &                                                                                              &                                                                
\end{tikzcd}
\end{equation}

\noindent where the matrices $M_1\ldots M_9$ are given by
\[
M_1 = (f \,\, m), \quad M_2 = (p \,\, 1\,\, u)^T, \quad M_3 = (q\,\, 0), \quad M_5 = (r\,\, 1 \,\, v), \quad M_6 = (s \,\, 0), \quad M_8 = (z\,\, 1)^T
\]
as well as by
\[
M_4 = \begin{pmatrix}
g & 0\\
0 & 1\\
q & 0
\end{pmatrix} \qquad M_7=\begin{pmatrix}
h & 0 & r \\
0 & 1 & 1 \\
s & 0 & v
\end{pmatrix} \qquad M_9=\begin{pmatrix}
l & 0 & 0 \\
0 & 1 & 0
\end{pmatrix}.
\]

Observe that \eqref{acyclic2} is also quasi-isomorphic to 

\begin{equation}\label{acyclic3}
\begin{tikzcd}
                                      &                                                                  &                                                                  & 0 \arrow[r]                                                                & Z                                         \\
                                      &                                                                  & 0 \arrow[ru, no head, dashed] \arrow[r, no head, dashed]         & {} \arrow[u] \arrow[r]                                                     & Y\oplus\mathsf{A}_{n-2} \arrow[u, "N_3"'] \\
                                      & 0 \arrow[ru] \arrow[rr]                            &                                                                  & X\oplus\mathsf{A}_3 \arrow[ru, no head, dashed] \arrow[u, no head, dashed] &                                           \\
0 \arrow[ru] \arrow[rr] &                                                                  & W\oplus\mathsf{A}_2 \arrow[ru, "N_2"'] \arrow[uu] &                                                                            &                                           \\
U \arrow[u, "m"] \arrow[r, "M_1"']    & V\oplus\mathsf{A}_1 \arrow[ru, "N_1"'] \arrow[uu] &                                                                  &                                                                            &                                          
\end{tikzcd}
\end{equation}
where
\[
N_1=\begin{pmatrix}
g & -p \\
q & -u
\end{pmatrix}, \qquad N_2=\begin{pmatrix}
h & -r \\
s & -v
\end{pmatrix}, \qquad N_3=\begin{pmatrix}
l & -z 
\end{pmatrix}.
\]

The quasi-isomorphism is given by identities where possible, zero maps to the zero objects and the following morphisms:

\begin{equation}
\begin{tikzcd}[ampersand replacement=\&]
W\oplus\mathsf{A}_1\oplus\mathsf{A}_2 \arrow[dd, "{\begin{pmatrix} 1 & 0 & -p \\ 0 & 1 & -u \end{pmatrix}}"] \&  \& X\oplus\mathsf{A}_2\oplus\mathsf{A}_3 \arrow[dd, "{\begin{pmatrix} 1 & 0 & -r\\ 0 & 1 & -v \end{pmatrix}}"] \&  \& Z\oplus\mathsf{A}_{n-2} \arrow[dd, "{\begin{pmatrix} 1 & -z\end{pmatrix}}"] \\
                                                                                                           \&  \&                                                                                                           \&  \&                                         \\
W\oplus\mathsf{A}_2                                                                                        \&  \& X\oplus\mathsf{A}_3                                                                                       \&  \& Z                                      
\end{tikzcd}
\end{equation}
Let us notice that $\D^{A(n,2)}(e)$ is a full subcategory of $\D(k\tilde{A}(n,2))$ with the components at $(i,i+1)$ acyclic. We now define the equivalence
 \[
\begin{tikzcd}
{\mathrm{D}(kA_n/I)} \arrow[rr, "F", shift left=2] &  & {\D^{A(n,2)}(e),} \arrow[ll, "G", shift left=2]
\end{tikzcd}\]
where $F$ is in the obvious inclusion where we add zero objects in positions $(i,i+1)$ and the functor $G$ sends \eqref{2} to \eqref{acyclic3}. Indeed, we observe that \eqref{acyclic3} can be easily converted to a complex of modules over $kA_n/I$.
Let us prove that these are indeed equivalences. In one direction, the composition $GF$ is the identity functor and in the other direction, there is a natural isomorphism of $FG$ to the identity functor on $\D^{A(n,2)}(e)$ given by the zig-zag of quasi-isomorphisms.\end{proof}

\noindent The structure of strict full subderivators was the key to construct the enhancement for the derived category of $A_n$ with relations given by the ideal $I$. However, whether this structure can enhance a generic quiver with relations is still an open question. Indeed, the most intuitive poset we would draw to enhance $\mathrm{D(kA_n/I)}$ is without the arrows between the acyclic objects, however, they play a fundamental role from the homotopy theory point of view. We now explain better why through an example on $A(4,2)$. Let $\tilde{A}(4,2,-)$ be the following poset
\[
\adjustbox{scale=0.8}{
\begin{tikzcd}
                            & {(1,2)} \arrow[r]             & {(2,2)}           \\
{(0,1)} \arrow[rr]          &                               & {(2,1)} \arrow[u] \\
{(0,0)} \arrow[r] \arrow[u] & {(1,0)} \arrow[ru] \arrow[uu] &                   \\
                            & {\tilde{A}(4,2,-)}            &                  
\end{tikzcd}}\]

\noindent We call $\D^{A(4,2,-)}$ the strict full subderivator of $\D^{\tilde{A}(4,2,-)}$ with same vanishing condition as $\D^{A(4,2)}$.\\
Suppose that the arrow $(0,1)\to(1,2)$ in $\tilde{A}(4,2)$ is not necessary to enhance $\mathrm{D}(kA_n/I)$, then this would imply \[
\D^{A(4,2)} \cong \D^{A(4,2,-)}.\]
However this is not true, for example, when $\D$ is the derivator associated to the Grothendieck category $\Mod(k)$, for a field $k$. Indeed, they have different underlying categories which are not equivalent: as we saw in \eqref{derfield}, $\D^{A(4,2)}(e)\cong\mathrm{D}(kA_n/I)$ which, by \cite{kellerderived}, is equivalent to $\mathrm{D}(kA_n)$. In \cref{cor:derfield2} we prove that $\D^{A(4,2,-)}(e)$ is equivalent to $\mathrm{D}(kD_4)$ where $D_4$ is the Dynkin quiver:
\[
\begin{tikzcd}
            & 2                     \\
0 \arrow[r] & 1 \arrow[u] \arrow[d] \\
            & 3                    
\end{tikzcd}.\]

Since in \cite{kellerderived} it is proved that $\mathrm{D}(kD_4)$ is not equivalent to $\mathrm{D}(kA_n)$, then $\D^{A(4,2)}$ can't be equivalent to $\D^{A(4,2,-)}$.

\section{Main theorem and proof}\label{sec5}

In this section we state and prove the main result of this article.

Observe that the free category of $A_n$ is isomorphic to $[n-1]$.

\begin{theorem}\label{mainthm}
Let $\D$ be a stable derivator. Then, for any integer $n\ge 3$, there exists an equivalence of stable derivators \begin{equation}\label{maineq}
    \begin{tikzcd}
{\D^{A(n,2)}} \arrow[rr, "i^n", shift left=2] &  & \D^{A_n} \arrow[ll, "G^n", shift left=2]
\end{tikzcd}\end{equation} which is natural with respect to exact morphisms.
\end{theorem}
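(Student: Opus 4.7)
The plan is to construct $i^n$ and $G^n$ as explicit compositions of left and right Kan extensions (including extensions by zero) along suitable subposet inclusions, and to verify the equivalence using the essential-image criteria of \cref{extbyzero} and \cref{prop:detect}, together with an inductive argument based on the total-cofiber calculus of \cref{tcof}.

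The first step is to identify a fully faithful inclusion $c\colon[n-1]\simeq A_n\hookrightarrow\tilde{A}(n,2)$ realizing $A_n$ as the ``non-vanishing chain''
\[
(0,0)\to(1,0)\to(2,1)\to(3,2)\to\cdots\to(n-2,n-3)\to(n-2,n-2)
\]
inside $\tilde{A}(n,2)$. The functor $G^n$ should then be essentially the restriction along~$c$, while $i^n$ is assembled from (a) Kan extensions creating bicartesian squares in the staircase, and (b) extensions by zero (\cref{extbyzero}) at the positions $(i,i+1)$ for $i=0,\dots,n-3$. Each building block is a fully faithful Kan extension by \cref{kanextful}, so $i^n$ is fully faithful as a composition, and the isomorphism $G^n\circ i^n\simeq\id$ follows at once from the units of the corresponding adjunctions.

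I would then proceed by induction on $n\geq 3$. For the base case $n=3$, where $\tilde{A}(3,2)=\square$, the equivalence reduces to the observation that in a stable derivator a square is bicartesian iff the data it encodes is a cofiber sequence (\cref{ccsq}); the vanishing at $(0,1)$ makes the total cofiber construction (\cref{defn:tcof}) directly applicable, matching the filtered-object data of $\D^{A_3}$. For the inductive step, $\tilde{A}(n,2)$ can be presented as $\tilde{A}(n-1,2)$ with one additional cocone step appended, in precisely the form underlying the total cofiber (\cref{prop:cocart-cocone}), so that the inductive hypothesis extends by pasting on one more total cofiber at each iteration.

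The hard step will be essential surjectivity of $i^n$: showing that every coherent diagram in $\D^{\tilde{A}(n,2)}$ satisfying only the specified vanishing conditions arises from the composition defining $i^n$. For this I would apply \cref{prop:detect} together with \cref{lem:detect-Kan-to-colim} to reduce to checking that the units of certain Kan-extension adjunctions are isomorphisms on $\D^{A(n,2)}$; by \cref{lem:detect-final} this reduces further to slice-category colimit computations inside $\tilde{A}(n,2)$. The key geometric input is the precise zig-zag shape of $\tilde{A}(n,2)$, with the interior diagonal positions $(i,i)$ for $0<i<n-2$ omitted: this missing data is exactly what ensures that no further bicartesian constraints are imposed on the image of $i^n$ beyond the specified vanishing, so the essential image coincides with all of $\D^{A(n,2)}$. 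Finally, naturality with respect to exact morphisms is immediate, since such morphisms commute with the Kan extensions and extensions by zero used to build $i^n$ and $G^n$ up to canonical isomorphism.
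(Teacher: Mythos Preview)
Your approach has a genuine gap at its very first step. Restriction along the non-vanishing chain $c$ cannot be an equivalence $\D^{A(n,2)}\to\D^{A_n}$: for any $X\in\D^{A(n,2)}$, each consecutive composite along $c$ (for instance $X_{(0,0)}\to X_{(1,0)}\to X_{(2,1)}$) factors in $\tilde{A}(n,2)$ through one of the vanishing objects $X_{(i,i+1)}\cong 0$, since $\tilde{A}(n,2)$ is a subposet of $[n-2]^2$ and the relevant inequalities hold there. Hence $c^\ast$ lands only in chains whose consecutive composites are coherently null---morally another copy of coherent chain complexes, not of arbitrary filtrations---and is not essentially surjective. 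Already for $n=3$ the identity chain $Y\xrightarrow{\id}Y\xrightarrow{\id}Y$ with $Y\not\cong 0$ is not in the essential image. Correspondingly, your description of $i^n$ cannot be carried out as stated: the inclusion of the chain into $\tilde{A}(n,2)$ is neither a sieve nor a cosieve, so \cref{extbyzero} does not apply, and $\tilde{A}(n,2)$ contains no $\square$-shaped subposets through the vanishing vertices (the diagonal points $(i,i)$ for $0<i<n-2$ are deliberately absent), so there are no ``bicartesian squares in the staircase'' to create.

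The paper's argument is structurally different. The copy of $A_n$ does \emph{not} sit inside $\tilde{A}(n,2)$: one first adjoins a new vertex $(1,1)$ realising the cone of $X_{(0,0)}\to X_{(1,0)}$ (this is where \cref{prop:cocart-cocone} enters), and only then deletes $(0,0)$ and $(0,1)$. The resulting shape $\tilde{K}^n_{1,4}$ is \emph{not} $\tilde{A}(n-1,2)$---it carries an extra arrow $(1,0)\to(1,1)$---and handling this discrepancy is the substantive part of the proof: it requires passing to auxiliary three-dimensional posets $\tilde{K}^n_{l,m}$ and showing that certain collapsing maps are homotopical epimorphisms via \cref{prop:detect} and \cref{lem:detect-final}. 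Only then does the total-cofiber step iterate. In short, the equivalence is a long composite through many intermediate shapes rather than a single restriction/Kan-extension pair, and the induction does not reduce to the $(n-1)$-case on the nose.
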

\begin{proof} The proof is divided in $7$ steps and is based only on Kan extensions and restrictions of inclusion functors between posets which, for the case $n=4$, are all illustrated in \cref{$A_4$}. We prove this result by induction, starting from the case $n=3$.

\smallskip

\textsf{First step: Case $n=3$.}\\
If $n=3$, we consider $ \tilde{A}(3,2)$ as a poset of the shape $\square=[1]\times[1]$. The goal is to construct a chain of equivalences whose composition gives us the desired one. Since all the functors considered in this step are Kan extensions of fully faithful functors, thanks to \cref{kanextful}, it is enough to check their essential images. 
Following the total cofiber construction (Construction \ref{tcof}), we define the inclusion of the target square functor \[
\begin{tikzcd}
{(0,1)} \arrow[r]           & {(2,1)}           \\
{(0,0)} \arrow[u] \arrow[r] & {(1,0)} \arrow[u] \\
                            & {\tilde{A}(3,2)} 
\end{tikzcd}\quad\quad\stackrel{i^3_{1,1}}{\longrightarrow}\quad\quad
\begin{tikzcd}
{(0,1)} \arrow[r]           & {(1,1)} \arrow[r]   & {(2,1)} \\
{(0,0)} \arrow[u] \arrow[r] & {(1,0)} \arrow[u]   &         \\
                            & {\tilde{K}^3_{1,2}} &        
\end{tikzcd}.\]
Recalling the conditions on $\D^{A(n,2)}$, by Proposition \ref{prop:cocart-cocone}, an element $X \in \D^{\tilde{K}^3_{1,2}}$ belongs to the essential image of \[(i^3_{1,1})_!:\D^{A(n,2)} \to \D^{\tilde{K}^3_{1,2}}\] if and only if the square
\begin{equation}
    \label{square1}
\begin{tikzcd}
X_{(0,1)} \arrow[r]           & X_{(1,1)}           \\
X_{(0,0)} \arrow[u] \arrow[r] & X_{(1,0)} \arrow[u]
\end{tikzcd}\end{equation}
is cocartesian and $X_{(0,1)}=0$. Since $\D$ is stable, the square (\ref{square1}) is also cartesian. 
We denote by $\D^{K^3_{1,2}}$ this essential image. By the characterization we have just given, $\D^{K^3_{1,2}}$ is a strict full subderivator of $\D^{\tilde{K}^3_{1,2}}$. Since $X_{(0,1)}=0$, we observe that $X_{(1,1)}$ is the cone of the map \[X_{(0,0)} \to  X_{(1,0)}.\]
Consider now a new inclusion of posets 
\[\begin{tikzcd}
{(1,1)} \arrow[r]    & (2,1) \\
{(1,0)} \arrow[u]    &        \\
A_3 &       
\end{tikzcd}\quad\quad\stackrel{i^3_{1,3}}{\longrightarrow}\quad\quad
\begin{tikzcd}
{(0,1)} \arrow[r] & {(1,1)} \arrow[r]   & (2,1) \\
                  & {(1,0)} \arrow[u]   &        \\
                  & {\tilde{K}^3_{1,3}} &       
\end{tikzcd}.\]
It is a cosieve, hence, by Proposition \ref{extbyzero}, \[(i^3_{1,3})_!\colon \D^{A_3} \to \D^{\tilde{K}^3_{1,3}}\] is the left extension by zero. Thus $(i^3_{1,3})_!$ induces an equivalence onto the strict full subderivator \[
\D^{K^3_{1,3}} \subseteq \D^{\tilde{K}^3_{1,3}}\]
spanned by all diagrams which vanish at $(0,1)$.
Finally, we include $\tilde{K}^3_{1,3}$ in $\tilde{K}^3_{1,2}$ through the map

\[
\begin{tikzcd}
{(0,1)} \arrow[r] & {(1,1)} \arrow[r]   & (2,1) \\
                  & {(1,0)} \arrow[u]   &        \\
                  & {\tilde{K}^3_{1,3}} &       
\end{tikzcd}\quad\quad\stackrel{i^3_{1,2}}{\longrightarrow}\quad\quad \begin{tikzcd}
{(0,1)} \arrow[r]           & {(1,1)} \arrow[r]   & {(2,1)} \\
{(0,0)} \arrow[u] \arrow[r] & {(1,0)} \arrow[u]   &         \\
                            & {\tilde{K}^3_{1,2}} &        
\end{tikzcd}.
\]
By Proposition \ref{ccsq}, the right Kan extension \[(i^3_{1,2})_*:\D^{K^3_{1,3}} \to \D^{\tilde{K}^3_{1,2}}\] induces an equivalence onto the essential image that consists of the objects $X \in \D^{\tilde{K}^3_{1,2}}$ such that the square (\ref{square1})
is cartesian and vanishes in the position $(0,1)$ \ie  it induces an equivalence onto $\D^{K^3_{1,2}}$.
Since both the Kan extensions $(i^3_{1,2})_*$ and $(i^3_{1,3})_!$ are fully faithful, the composition  $(i^3_{1,2})_*(i^3_{1,3})_!$ is fully faithful and then it induces an equivalence onto the strict full subderivator $\D^{K^3_{1,2}}$ so that we can consider the inverse equivalence \[((i^3_{1,2})_*(i^3_{1,3})_!)^{-1}=(i^3_{1,3})_!^{-1}(i^3_{1,2})_*^{-1}: \D^{K^3_{1,2}} \to \D^{A_3}.\]
Notice that in this case the restrictions do restrict to functors between the corresponding strict full subderivators. Thus, we get the following simplified formula:
\[(i^3_{1,3})_!^{-1}(i^3_{1,2})_*^{-1}=(i^3_{1,3})^*(i^3_{1,2})^*.\]

Since we also have the equivalence given by $(i^3_{1,1})_!$, we get the desired equivalence by considering the following composition:
\[\D^{A(3,2)} \stackrel{(i^3_{1,1})_!}{\longrightarrow}\D^{K^3_{1,2}}\stackrel{(i^3_{1,2})^*}{\longrightarrow}\D^{K^3_{1,3}}\stackrel{(i^3_{1,3})^*}{\longrightarrow}\D^{A_3}.\]
From now on, let us denote by $\tilde{K}^3_{1,4}$ the coherent diagram of shape  $A_3$.
\oneline
\textsf{Second step: $\D^{A(n,2)} \cong \D^{K^n_{1,2}}\cong\D^{K^n_{1,3}}\cong\D^{K^n_{1,4}}$ for $n\ge 4$.}

In this step we consider the generic case $\D^{A(n,2)}$ and we explain the first part of the inductive passage to reduce the problem to the case $\D^{A(n-1,2)}$.  We apply the same strategy as in the \textit{First Step} in order to ``delete''  the first object subject to the vanishing condition in $\D^{A(n,2)}$. After this procedure we get a poset with only $n-3$ objects subject to the vanishing conditions as in $\D^{A(n-1,2)}$.

We consider the following posets and inclusions, for $n\ge 4$.
\[\adjustbox{scale=0.6}{
\begin{tikzcd}
                              &                               &                                                                & { (n-3,n-2)} \arrow[r]                                         & {(n-2,n-2)}           \\
                              &                               & {(2,3)} \arrow[ru, no head, dashed] \arrow[r, no head, dashed] & {} \arrow[u] \arrow[r]                                         & {(n-2,n-3)} \arrow[u] \\
                              & {(1,2)} \arrow[ru] \arrow[rr] &                                                                & {(3,2)} \arrow[ru, no head, dashed] \arrow[u, no head, dashed] &                       \\
{(0,1)} \arrow[ru] \arrow[rr] &                               & {(2,1)} \arrow[ru] \arrow[uu]                                  &                                                                &                       \\
{(0,0)} \arrow[u] \arrow[r]   & {(1,0)} \arrow[ru] \arrow[uu] &                                                                &                                                                &                       \\
{\tilde{A}(n,2)}                      &                               &                                                                &                                                                &                      
\end{tikzcd}} \stackrel{i^n_{1,1}}{\longrightarrow}\adjustbox{scale=0.6}{
\begin{tikzcd}
                            &                               &                                                                & { (n-3,n-2)} \arrow[r]                                         & {(n-2,n-2)}           \\
                            &                               & {(2,3)} \arrow[ru, no head, dashed] \arrow[r, no head, dashed] & {} \arrow[u] \arrow[r]                                         & {(n-2,n-3)} \arrow[u] \\
                            & {(1,2)} \arrow[ru] \arrow[rr] &                                                                & {(3,2)} \arrow[ru, no head, dashed] \arrow[u, no head, dashed] &                       \\
{(0,1)} \arrow[r]           & {(1,1)} \arrow[u] \arrow[r]   & {(2,1)} \arrow[ru] \arrow[uu]                                  &                                                                &                       \\
{(0,0)} \arrow[u] \arrow[r] & {(1,0)} \arrow[u]             &                                                                &                                                                &                       \\
{\tilde{K}^n_{1,2}}         &                               &                                                                &                                                                &                      
\end{tikzcd}}
\]

\[\stackrel{i^n_{1,2}}{\longleftarrow}\adjustbox{scale=0.6}{
\begin{tikzcd}
                    &                               &                                                                & { (n-3,n-2)} \arrow[r]                                         & {(n-2,n-2)}           \\
                    &                               & {(2,3)} \arrow[ru, no head, dashed] \arrow[r, no head, dashed] & {} \arrow[u] \arrow[r]                                         & {(n-2,n-3)} \arrow[u] \\
                    & {(1,2)} \arrow[ru] \arrow[rr] &                                                                & {(3,2)} \arrow[ru, no head, dashed] \arrow[u, no head, dashed] &                       \\
{(0,1)} \arrow[r]   & {(1,1)} \arrow[u] \arrow[r]   & {(2,1)} \arrow[ru] \arrow[uu]                                  &                                                                &                       \\
                    & {(1,0)} \arrow[u]             &                                                                &                                                                &                       \\
{\tilde{K}^n_{1,3}} &                               &                                                                &                                                                &                      
\end{tikzcd}}\stackrel{i^n_{1,3}}{\longleftarrow} \adjustbox{scale=0.6}{
\begin{tikzcd}
                              &                                                                & { (n-3,n-2)} \arrow[r]                                         & {(n-2,n-2)}           \\
                              & {(2,3)} \arrow[ru, no head, dashed] \arrow[r, no head, dashed] & {} \arrow[u] \arrow[r]                                         & {(n-2,n-3)} \arrow[u] \\
{(1,2)} \arrow[ru] \arrow[rr] &                                                                & {(3,2)} \arrow[ru, no head, dashed] \arrow[u, no head, dashed] &                       \\
{(1,1)} \arrow[u] \arrow[r]   & {(2,1)} \arrow[ru] \arrow[uu]                                  &                                                                &                       \\
{(1,0)} \arrow[u]             &                                                                &                                                                &                       \\
{\tilde{K}^n_{1,4}}                   &                                                                &                                                                &                      
\end{tikzcd}}
\]
Our aim is again to construct a chain of fully faithful functors whose composition induces an equivalence onto the essential image. By \cref{kanextful}, all the functors we consider in this step are fully faithful, thus it suffices to check their essential images.
 
By applying \cref{prop:intersection}, let us define \[
\D^{\hat{K}^n_{1,m}} \subseteq \D^{\tilde{K}^n_{1,m}}, \quad m = 2,3,4\]
as the strict full subderivator 
spanned by all the coherent diagrams which vanish at $(i,i+1)$ for \[\begin{cases}
  0\leq i \leq n-3  &\text{if\quad} m=2,3\\
  1\leq i \leq n-3  &\text{if\quad} m=4.
\end{cases}\] 
Consider the inclusion $i^n_{1,1}$, by Proposition \ref{ccsq}, the essential image of the left Kan extension \[(i^n_{1,1})_!:\D^{A(n,2)}\to \D^{\tilde{K}^n_{1,2}}\] consists of the objects $X \in \D^{\hat{K}^n_{1,2}}$ such that the square (\ref{square1}) 
is cocartesian. We denote by $\D^{K^n_{1,2}}$ this essential image. As in the previous step, we observe that $\D^{K^n_{1,2}}$ is a strict full subderivator of $\D^{\hat{K}^n_{1,2}}$ and $X_{(1,1)}$ is the cone of the map \[X_{(0,0)} \to  X_{(1,0)}.\]
Consider now the map  $i^n_{1,3}$, it is the inclusion of a cosieve. Hence it follows from Proposition \ref{extbyzero} that \[(i^n_{1,3})_!:\D^{\hat{K}^n_{1,4}} \to \D^{\tilde{K}^n_{1,3}}\] is the left extension by zero. The essential image of this functor is $\D^{\hat{K}^n_{1,3}}$ and for consistence we denote $\D^{K^n_{1,3}}=\D^{\hat{K}^n_{1,3}}$.

\noindent Next, let us observe that $\D^{K^n_{1,2}}$ coincides with the essential image of the right Kan extension of $i^n_{1,2}$
\[(i^n_{1,2})_*:\D^{K^n_{1,3}} \to \D^{\tilde{K}^n_{1,2}}.\]
Indeed, by Proposition \ref{ccsq}, it consists of the objects $X \in \D^{\hat{K}^n_{1,2}}$ such that the square (\ref{square1})
is cartesian. By an analogous reasoning as in the previous step, this implies that the essential image of the functor $(i^n_{1,2})_*(i^n_{1,3})_!$ coincides with the essential image of $(i^n_{1,1})_!$. Thus, the desired equivalence is obtained as the following composition of functors
\[\D^{A(n,2)} \stackrel{(i^n_{1,1})_!}{\longrightarrow}\D^{K^n_{1,2}}\stackrel{(i^n_{1,2})^*}{\longrightarrow}\D^{K^n_{1,3}}\stackrel{(i^n_{1,3})^*}{\longrightarrow}\D^{\hat{K}^n_{1,4}}=\D^{K^n_{1,4}}.\]
Here the last equality holds since $(i^n_{1,3})_!$ is an equivalence and since $\D^{K^n_{1,4}}=\D^{\hat{K}^n_{1,4}}$ by definition.
\oneline
\textsf{Third step: Definition of $\D^{\hat{K}^n_{l,m}}$ for $2 \leq l \leq n-2$,\, $1\leq m \leq 4$, $n\ge4$.}\\
The construction in the previous step leads to the derivator $\D^{K^n_{1,4}}$ with only $n-3$ objects subject to the vanishing conditions. Still, $\D^{K^n_{1,4}} \ne \D^{A(n-1,2)}$; in particular, the difference between them is the arrow $(1,0) \to (1,1)$. The idea is then to manage this arrow by ``bending'' it and construct a $3$-dimensional poset where, by fixing the third coordinate, we find the poset $\tilde{A}(n-1,2)$ (see \textit{Fourth Step}). The resulting new posets are denoted by \[\tilde{K}^n_{l,m}\]
where $n$ comes from $A(n,2)$, $1\leq m \leq 4$ indicates the passages of the procedure in the \textit{Second Step} and $2\leq l \leq n-2$ is the index for the new third dimension: in particular, $l-1$ equals the number of arrows we ``bent''. As posets \[\tilde{K}^n_{l,m}=\tilde{K}^{n-l+1}_{1,m}\times [l-1]\]for $2\leq l \leq  n-2$, $1\leq m \leq 4$ with the componentwise order where we denote
\[
\tilde{K}^{n-l+1}_{1,1} = \tilde{A}(n-l+1,2)
\]
as posets.
We then define the strict full subderivators of $\D^{\tilde{K}^n_{l,m}}$ we need. 

\smallskip

\noindent Consider the poset of the shape \[\tilde{K}^{n-l+1}_{1,m}\times [l-1]\]
for $2\leq l \leq n-2$, $1\leq m \leq 4$ where $\tilde{K}^{n-l+1}_{1,1}$ denotes the poset $\tilde{A}(n-l+1,2)$.

\noindent Thanks to \cref{prop:intersection}, we define \[
\D^{\hat{K}^{n}_{l,m}} \subseteq \D^{\tilde{K}^{n-l+1}_{1,m}\times [l-1]}
\]
to be the strict full subderivator
spanned by all coherent diagrams which vanish at
\[(x,x+1,y) \text{ \quad for\quad } 1\leq x \leq n-l-2,\quad 0\leq y \leq l-1 \text{ \quad if \quad  } m \ne 4\]\[(x,x+1,y) \text{ \quad for\quad } 0\leq x \leq n-l-2,\quad 0\leq y \leq l-1 \text{ \quad if\quad  } m=4\]and
in addition to this condition we require the arrows
\[ (x,x-1,y) \rightarrow (x,x-1,y+1)\]
\[ (x-1,x,y) \rightarrow (x-1,x,y+1)\]
\[ (n-l-1,n-l-1,y) \rightarrow (n-l-1,n-l-1,y+1)\]
to be isomorphisms for $1\leq x \leq n-l-1$, $0\leq y \leq l-2$. Namely, we want $\D^{\hat{K}^{n}_{l,m}}$ to be the intersection between the strict full subderivator satisfying the vanishing conditions described above and the one satisfying the isomorphism conditions described above. 

\noindent Some of the posets and the exactness conditions are depicted below in \cref{$A_4$}.
\oneline
\textsf{Fourth step: $\D^{K^n_{1,4}} \cong \D^{\hat{K}^n_{2,1}}$ for $n\ge 4$.}\\
In this step we illustrate how to formally pass from the $2$-dimensional poset $\tilde{K}^n_{1,4}$ to the $3$-dimensional poset $\tilde{K}^n_{2,1}$ and we prove the equivalence between the strict full subderivators $\D^{K^n_{1,4}}$ and  $\D^{\hat{K}^n_{2,1}}$ by defining an homotopical epimorphism (\cref{sec:homoepi}). Consider the epimorphism given by  
\begin{align*}
i^n_{1,4} \colon \tilde{K}^n_{2,1} & \to \tilde{K}^n_{1,4}; \\
(x,y,z) & \mapsto (x+1,y+1),\, \text{ if } (x,y,z) \neq 0;\\
(0,0,0) & \mapsto (1,0).
\end{align*}
 We want to prove that it is an homotopical epimorphism and in particular that \[(i^n_{1,4})^*:\D^{K^n_{1,4}}\to \D^{\hat{K}^n_{2,1}}\] is an equivalence. In order to apply Proposition \ref{prop:detect} we have to show that $\essIm((i^n_{1,4})^*) \subseteq \D^{\hat{K}^n_{2,1}}$ and that the unit \[\eta\colon Y \to (i^n_{1,4})^*(i^n_{1,4})_!Y\] is an isomorphism for every $Y \in \D^{\hat{K}^n_{2,1}}$. Clearly $i^n_{1,4}$ is surjective on objects and the inclusion \[
 \essIm((i^n_{1,4})^*)\subseteq \D^{\hat{K}^n_{2,1}}\]
 holds. To show that the unit is an isomorphism, by (Der$2$), it suffices to check the invertibility of $\eta$ at every object $(x,y,z) \in \tilde{K}^n_{2,1}$ \ie  to check that the map 
 \[(x,y,z)^*\eta\colon (x,y,z)^*Y \to (x,y,z)^*(i^n_{1,4})^*(i^n_{1,4})_!Y\]
 is an isomorphism for every $(x,y,z)\in \tilde{K}^n_{2,1}$. By Lemma \ref{lem:detect-Kan-to-colim} this is equivalent to proving that the map
 \begin{equation}\label{iso1}
     ((x,y,z), \id_{i^n_{1,4}((x,y,z))})^*p^*Y \to \colim_{(i^n_{1,4}/i^n_{1,4}((x,y,z))}p^*Y
  \end{equation}
 is an isomorphism. In the case $(x,y,z)=(0,0,0)$, it suffices to notice that the slice category consists of only one object. For $(x,y,z)\ne(0,0,0)$, we observe that $((x,y,1), \id_{i^n_{1,4}((x,y,1))})$ is the the terminal object of the slice category \[
 i^n_{1,4}/i^n_{1,4}((x,y,z)).\]
We proceed in two separate cases.
 \begin{itemize}\item[(1)] For $z=1$, by Lemma \ref{lem:detect-final}, (\ref{iso1}) is an isomorphism if and only if 
\[((x,y,z), \id_{i^n_{1,4}((x,y,z))})^*p^*Y \to ((x,y,z), \id_{i^n_{1,4}((x,y,z))})^*p^*Y\]
is an isomorphism. Since this map is the restriction of the identity, this is enough to conclude.
\item[(2)] For $z=0$, by Lemma \ref{lem:detect-final}, (\ref{iso1}) is an isomorphism if and only if 
\[((x,y,z), \id_{i^n_{1,4}((x,y,z))})^*p^*Y \to ((x,y,1), \id_{i^n_{1,4}((x,y,1))})^*p^*Y\]
is an isomorphism. Notice that in the previous step of the proof, more precisely when defining the strict full subderivator $\D^{\hat{K}^n_{2,1}}$, we required this map to be an isomorphism, and so we are done.
\end{itemize}

\textsf{Fifth step: $\D^{K^n_{l,1}} \cong \D^{K^n_{l,2}}\cong \D^{K^n_{l,3}}\cong \D^{K^n_{l,4}}$ for $2\leq l \leq n-2$, $n\ge 4$.}\\
In this step we explain how we apply the procedure in the \textit{Second step} to the cases where $l>1$. By the first and the second step of the proof, for any stable derivator we have the following equivalence for any $n \ge 3$
\[\D^{A(n,2)}=\D^{K^n_{1,1}} \stackrel{(i^n_{1,1})_!}{\longrightarrow}\D^{K^n_{1,2}}\stackrel{(i^n_{1,2})^*}{\longrightarrow}\D^{K^n_{1,3}}\stackrel{(i^n_{1,3})^*}{\longrightarrow}\D^{K^n_{1,4}}.\]
If, as stable derivator, we now consider $\E=\D^{[l-1]}$ then we have the following situation 
\[\begin{tikzcd}
\D^{K^{n-l+1}_{1,1}\times [l-1]} \arrow[rr, "(i^n_{1,1}\times \id)_!"]               &  & \D^{K^{n-l+1}_{1,2}\times [l-1]} \arrow[rr, "(i^n_{1,2}\times \id)^*"]                &  & \D^{K^{n-l+1}_{1,3}\times [l-1]} \arrow[rr, "(i^n_{1,3}\times \id)^*"]                 &  & \D^{K^{n-l+1}_{1,4}\times [l-1]}                 \\
\D^{K^n_{l,1}} \arrow[u, hook] \arrow[rr, "(i^n_{l,1})_!"] &  & \D^{K^n_{l,2}} \arrow[rr, "(i^n_{l,2})^*"] \arrow[u, hook] &  & \D^{K^n_{l,3}} \arrow[rr, "(i^n_{l,3})^*"] \arrow[u, hook] &  & \D^{K^n_{l,4}}. \arrow[u, hook]
\end{tikzcd}\]
Here $\D^{K^{n-l+1}_{1,m}\times [l-1]}=\E^{K^{n-l+1}_{1,m}}$, which is itself a strict full subderivator of \[\E^{\tilde{K}^{n-l+1}_{1,m}}=\D^{\tilde{K}^{n-l+1}_{1,m} \times [l-1]}=\D^{\tilde{K}^{n-l+1}_{l,m}}\]
for $1\leq m \leq 4$.
All the horizontal top arrows are then equivalences and we define $\D^{K^n_{l,m}}$ for any $1\leq m \leq 4$, $2\leq l \leq n-2$ as the strict full subderivator given by the intersection \[\D^{K^{n-l+1}_{1,m}\times [l-1]} \cap \D^{\hat{K}^n_{l,m}} \subseteq \D^{\hat{K}^n_{l,m}}.\] We observe that $\D^{K^n_{2,1}}$ coincides with $\D^{\hat{K}^n_{2,1}}$ by definition. The bottom maps are the restrictions of the top ones to $\D^{K^n_{l,m}}$: they are well defined by Proposition \ref{prop:fullsubder}. We want to show that the bottom maps are still equivalences. These functors are clearly fully faithful then we only  have to check that the essential images coincides with $\D^{K^n_{l,m}}$ for $2\leq m \leq 4$, $2\leq l \leq n-2$. Consider the functors $f_i\colon e\to [l-1]$ for $0\leq i \leq l-1$, which choose the object $i \in [l-1]$ and consider an object $X^m \in \D^{K^n_{l,m}}$ for $2\leq m \leq 4$. By \cref{prodfunct}, the following holds for every $i$:
\[X^m \in \essIm(i^n_{l,m-1})^{\clubsuit}_{\heartsuit} \iff  (\id \times f_i)^*X^m \in \essIm(i^n_{1,m-1})^{\clubsuit}_{\heartsuit}, \qquad \forall \clubsuit \in \{\emptyset,-1,*\}, \, \heartsuit \in \{!,*,\emptyset\}.
\]

By \cref{kanextful}(2), the above double implication is implied by the commutativity of following diagram \[\begin{tikzcd}
 \D^{K^n_{1,m-1}} \arrow[rr, "(i^n_{1,m-1})^{\clubsuit}_{\heartsuit}"]                &  & \D^{K^n_{1,m}}                 \\
 \D^{K^n_{l,m-1}} \arrow[rr, "(i^n_{l,m-1})^{\clubsuit}_{\heartsuit}"'] \arrow[u, "(\id \times f)^*"] &  & \D^{K^n_{l,m}} \arrow[u, "(\id \times f)^*"']
\end{tikzcd}\]
and by the fact that the top arrows in the diagram above are equivalences.

\smallskip

\textsf{Sixth step: $\D^{K^n_{l,4}}\cong \D^{K^n_{l+1,1}} $ for $2\leq l \leq n-3$, $n\ge4$.}\\
Let us explain the homotopical epimorphism in the \textit{Fourth step} for the cases where $l>1$. In particular, we need to define a composition of two homotopical epimorphisms. 
We first define the new poset \[
\tilde{K}^n_{l,5} \subseteq \tilde{K}^n_{l,4}\]
to be the full subposet 
spanned by all the objects different from \[
(1,0,1),\cdots,(1,0,l-1);\]
see \cref{$A_4$} for an illustration. As in the third step we then define 
\[
\D^{K^n_{l,5}}\subseteq \D^{\tilde{K}^n_{l,5}}
\]to be the strict full subderivator 
spanned by all coherent diagrams which vanish at \[(x,x+1,y)\text{ for } 1\leq x \leq n-l-2, \, 0 \leq y \leq l-1. \]
In addition to this condition we require the arrows
\[ (x,x-1,y) \rightarrow (x,x-1,y+1)\]
\[ (n-l-1,n-l-1,y) \rightarrow (n-l-1,n-l-1,y+1)\]
to be isomorphisms for $1\leq x \leq n-l-1$, $0 \leq y \leq l-2$  
and the arrows
\[ (x-1,x,y) \rightarrow (x-1,x,y+1)\]
to be isomorphisms for $2 \leq x \leq n-l-1$, $0\leq y \leq l-2$. 
\noindent We now define an epimorphism 
\begin{align*}
i^n_{l,4}\colon \tilde{K}^n_{l,4} & \to \tilde{K}^n_{l,5} \\
 (x,y,z) & \mapsto (x,y,z), \quad \text{if\quad} (x,y)\ne (1,0) \\
 (1,0,z) & \mapsto (1,0,0).
\end{align*}
 With techniques similar to those used in the fourth step of the proof, it is possible to verify that $i^n_{l,4}$ is an homotopical epimorphism giving an equivalence if we consider the restriction \[(i^n_{l,4})^*:\D^{K^n_{l,5}} \to \D^{K^n_{l,4}}.\] 
The second map we want to consider is
\begin{align*}i^n_{l,5} \colon \tilde{K}^n_{l+1,1} & \to \tilde{K}^n_{l,5} \\\
(x,y,z) & \mapsto (x+1,y+1,z-1), \quad \text{if} \quad z \ne 0 \\
(x,y,0) & \mapsto (x+1,y+1,0), \quad \text{if} \quad (x,y) \ne (0,0)  \\
(0,0,0) & \mapsto (1,0,0).
\end{align*}
Again with techniques similar to those used in fourth step of the proof, it is possible to verify that $i^n_{l,5}$ is an homotopical isomorphism if we consider the restriction
\[(i^n_{l,5})^*:\D^{K^n_{l,5}} \to \D^{K^n_{l+1,1}}.\] Then we have the following desired equivalences
\[\D^{K^n_{l,4}} \stackrel{((i^n_{l,4})^*)^{-1}}{\longrightarrow}\D^{K^n_{l,5}} \stackrel{(i^n_{l,5})^*}{\longrightarrow} \D^{K^n_{l+1,1}}.\]

\smallskip

\textsf{Seventh step: $\D^{K^n_{n-2,4}}\cong \D^{A_n}$ for $n\ge4$.}\\

Let us define the homotopical epimorphisms which give the equivalence to $\D^{A_n}$. By definition, the inclusion  \[\D^{K^n_{n-2,4}}\subset \D^{\tilde{K}^{n-n+3}_{1,4}\times [n-2-1]}=\D^{\tilde{K}^{3}_{1,4}\times [n-3]}=\D^{A_3\times [n-3]}\]
holds.  For $n=4$, the underlying posets are depicted in \cref{$A_4$}. We recall that the only conditions we have on the strict full subderivator $\D^{K^n_{n-2,4}}$ are the isomorphism conditions on the arrows
\[ (x,x-1,y) \rightarrow (x,x-1,y+1)\]
\[ (n-l-1,n-l-1,y) \rightarrow (n-l-1,n-l-1,y+1)\]
for $1 \leq x \leq n-l-1$ and $0 \leq y \leq l-2$. 

\smallskip

Similarly as in the previous step, the strategy of the proof consists in defining maps between posets that turn out to be equivalences given by homotopical epimorphisms. 
Analogously as before, let us construct the equivalence
\[\D^{K^n_{n-2,4}} \stackrel{((i^n_{n-2,4})^*)^{-1}}{\longrightarrow}\D^{K^n_{n-2,5}}. \]

\noindent We now define the following new map
\begin{align*}
i^n_{n-2,5}:\tilde{K}^n_{n-2,5} & \to A_n \\\
(1,1,z) & \mapsto z+2  \\\
(1,0,z) & \mapsto 1  \\\
(2,1,z) & \mapsto n.
\end{align*}

\noindent Using similar techniques to those used in the fourth step of the proof, one shows that \[(i^n_{n-2,5})^*:\D^{A_n} \to \D^{K^n_{n-2,5}}\] is an equivalence given by an homotopical epimorphism. Then, we get the last equivalence by taking the inverse of 
$(i^n_{n-2,5})^*$.
\oneline
\textsf{Conclusion.}\\
We conclude the proof considering the equivalence given by the composition of the ones we built in each step, namely

\begin{equation}\label{i^n}
    ((i^n_{n-2,5})^*)^{-1}((i^n_{n-2,4})^*)^{-1}(i^n_{n-2,3})^*(i^n_{n-2,2})^*(i^n_{n-2,1})_!(i^n_{n-3,5})^*((i^n_{n-3,4})^*)^{-1}(i^n_{n-3,3})^*\cdots\end{equation}
    \begin{equation*}
\cdots (i^n_{3,1})_!(i^n_{2,5})^*((i^n_{2,4})^*)^{-1}(i^n_{2,3})^*(i^n_{2,2})^*(i^n_{2,1})_!(i^n_{1,4})^*(i^n_{1,3})^*(i^n_{1,2})^*(i^n_{1,1})_!.
\end{equation*}
 We call this composition $i^n$.
Since we only extended by zeroes, added cocartesian squares and restricted through homotopical epimorphisms, it is possible to check that this equivalence is natural with respect to exact morphisms.\end{proof}

\begin{example}\label{$A_4$} We illustrate below the posets involved in the proof, in the case $n=4$. The underlined objects are the ones we required to be zero objects and the double line arrows are the isomorphisms.

\begin{equation}\label{pic$A(4,2)$}
    \adjustbox{scale=0.7}{
\begin{tikzcd}
                                               & {\underline{(1,2)}} \arrow[r]               & {\textbf{(2,2)}}           &  &  &                                    & {\underline{(1,2)}} \arrow[r] & {(2,2)}           &  &  & {\underline{(1,2)}} \arrow[r] & {(2,2)}           \\
{\underline{(0,1)}} \arrow[ru] \arrow[rr] &                                                  & {\textbf{(2,1)}} \arrow[u] &  &  & {\underline{(0,1)}} \arrow[r] & {(1,1)} \arrow[u] \arrow[r]        & {(2,1)} \arrow[u] &  &  & {(1,1)} \arrow[u] \arrow[r]        & {(2,1)} \arrow[u] \\
{\textbf{(0,0)}} \arrow[u] \arrow[r] & {\textbf{(1,0)}} \arrow[ru] \arrow[uu] &                                      &  &  & {(0,0)} \arrow[u] \arrow[r]        & {(1,0)} \arrow[u]                  &                   &  &  & {(1,0)} \arrow[u]                  &                   \\
                                               & {\tilde{K}^4_{1,1}}                              &                                      &  &  &                                    & {\tilde{K}^4_{1,2}}                &                   &  &  & {\tilde{K}^4_{1,4}}                &                  
\end{tikzcd}}\end{equation}
\[\adjustbox{scale=0.6}{
\begin{tikzcd}
                                          & {\underline{(0,1,1)}} \arrow[r]                       & {(1,1,1)}                       &  &                                           & {\underline{(0,1,1)}} \arrow[r]                       & {(1,1,1)} \arrow[r]           & {(2,1,1)}                       &  &                                             & {(1,1,1)} \arrow[r]           & {(2,1,1)}                       \\
                                          & {\underline{(0,1,0)}} \arrow[u, Rightarrow] \arrow[r] & {(1,1,0)} \arrow[u, Rightarrow] &  &                                           & {\underline{(0,1,0)}} \arrow[r] \arrow[u, Rightarrow] & {(1,1,0)} \arrow[r] \arrow[u] & {(2,1,0)} \arrow[u, Rightarrow] &  &                                             & {(1,1,0)} \arrow[r] \arrow[u] & {(2,1,0)} \arrow[u, Rightarrow] \\
{(0,0,1)} \arrow[r] \arrow[ruu]           & {(1,0,1)} \arrow[ruu]                                      &                                 &  & {(0,0,1)} \arrow[ruu] \arrow[r]           & {(1,0,1)} \arrow[ruu]                                      &                               &                                 &  & {(1,0,1)} \arrow[ruu]                       &                               &                                 \\
{(0,0,0)} \arrow[r] \arrow[u] \arrow[ruu] & {(1,0,0)} \arrow[u, Rightarrow] \arrow[ruu]                &                                 &  & {(0,0,0)} \arrow[r] \arrow[u] \arrow[ruu] & {(1,0,0)} \arrow[u, Rightarrow] \arrow[ruu]                &                               &                                 &  & {(1,0,0)} \arrow[u, Rightarrow] \arrow[ruu] &                               &                                 \\
                                          & {\tilde{K}^4_{2,1}}                                        &                                 &  &                                           & {\tilde{K}^4_{2,2}}                                        &                               &                                 &  & {\tilde{K}^4_{2,4}}                         &                               &                                
\end{tikzcd}}
\]
\[\adjustbox{scale=0.7}{
\begin{tikzcd}
                     & {(1,1,1)} \arrow[r]           & {(2,1,1)}                       &  &  &                      & {(1,1,1)} \arrow[r] & {(2,1,1)} \\
                     & {(1,1,0)} \arrow[u] \arrow[r] & {(2,1,0)} \arrow[u, Rightarrow] &  &  &                      & {(1,1,0)} \arrow[u] &           \\
{(1,0,0)} \arrow[ru] &                               &                                 &  &  & {(1,0,0)} \arrow[ru] &                     &           \\
{\tilde{K}^4_{2,5}}  &                               &                                 &  &  & A_4                  &                     &          
\end{tikzcd}.}
\]
\end{example}

\begin{remark}
 A similar result can be deduced from Falk Beckert's work \cite[Corollary~9.15]{beckert19}, where hypercubes are used to enhance quiver with relations. By composing the equivalence \eqref{maineq} with Beckert's equivalence, we find an equivalence between his hypercubes and $\D^{A(n,2)}$ for any stable derivator $\D$, which is again natural with respect to exact morphisms. For the case $n=3$ the considered hypercube shape coincides with the poset $\tilde{A}(3,2)$. For greater $n$, Beckert gets a $(n-2)$-dimensional hypercube. Here, as proved in \cref{sec:relenhancement}, we propose a more convenient and natural enhancement of quiver with relations which deals only with 2-dimensional posets. Moreover, we give a very elementary proof of \cref{mainthm} which involves only simple Kan extensions and restrictions. 
\end{remark}

We now finish the discussion at the end of \cref{sec:relenhancement} with techniques similar to those used in the proof of \cref{mainthm}.

\begin{proposition}\label{derfield2}
Let $\D$ be a stable derivator, then
$\D^{A(n,2,-)}\cong \D^{D_4}$.
\end{proposition}
\begin{proof}
    Observe that we can also depict the poset $\tilde{A}(4,2,-)$ as follows: 
    \[
\begin{tikzcd}
{(0,1)} \arrow[r]            & {(2,1)} \arrow[r]            & {(2,2)}            \\
                             &                              &                    \\
{(0,0)} \arrow[uu] \arrow[r] & {(1,0)} \arrow[r] \arrow[uu] & {(1,2).} \arrow[uu]
\end{tikzcd}\]

This poset is given by the product $[1]\times[2]$, and, if we require the vanishing conditions on the objects $(0,1),(1,2)$, we get a strict full subderivator of $\D^{[1]\times[2]}$ which is equivalent to $\D^{A(4,2,-)}$. For this reason, with a slight abuse of notation, we also denote it by $\D^{A(4,2,-)}$.
Similarly as in the proof of \cref{mainthm}, the equivalence \[
\D^{A(n,2,-)}\cong \D^{D_4}\]
is then given by Kan extensions and restrictions along the following inclusions. 
As the objects in the posets would not respect the usual ordering, for simplicity, in this proof we change the labels: $O_1$ and $O_2$ are the new labels for the objects on which we require the vanishing conditions. All the inclusions send any object to the one with the same label.

\[
\adjustbox{scale=0.7}{
\begin{tikzcd}
O_1 \arrow[r]               & Z \arrow[r]            & W              &                                       & O_1 \arrow[r] \arrow[rd]    & Z \arrow[r]           & W              &                                      & O_1 \arrow[r] \arrow[rd] & Z \arrow[r]           & W              \\
                            &                        &                & \stackrel{\lambda_1}{\longrightarrow} &                             & cone(f) \arrow[u]     &                & \stackrel{\lambda_2}{\longleftarrow} &                          &  cone(f) \arrow[u]    &                \\
X \arrow[uu] \arrow[r, "f"] & Y \arrow[r] \arrow[uu] & O_2 \arrow[uu] &                                       & X \arrow[uu] \arrow[r, "f"] & Y \arrow[u] \arrow[r] & O_2 \arrow[uu] &                                      &                          & Y \arrow[u] \arrow[r] & O_2 \arrow[uu]
\end{tikzcd}}\]
\vspace{0.5cm}
\[\adjustbox{scale=0.7}{
\begin{tikzcd}
O_1 \arrow[r] \arrow[rd] & Z \arrow[r]           & W              &                                      & Z \arrow[r, "g"]      & W              &                                       & Z \arrow[rr] \arrow[rd]           &        & W                         \\
                         &  cone(f) \arrow[u]    &                & \stackrel{\lambda_3}{\longleftarrow} & cone(f) \arrow[u]     &                & \stackrel{\lambda_4}{\longrightarrow} & cone(f) \arrow[u]                 & fib(g) &                           \\
                         & Y \arrow[u] \arrow[r] & O_2 \arrow[uu] &                                      & Y \arrow[u] \arrow[r] & O_2 \arrow[uu] &                                       & Y \arrow[u] \arrow[rr] \arrow[ru] &        & O_2 \arrow[uu] \arrow[lu]
\end{tikzcd}

}\]
\vspace{0.5cm}
\[\adjustbox{scale=0.7}{
\begin{tikzcd}
Z \arrow[rr]                      &                              & W              &                                      & Z                                 &                              &     &                                      & Z                      &                   \\
cone(f) \arrow[u]                 & fib(g) \arrow[rd] \arrow[lu] &                & \stackrel{\lambda_5}{\longleftarrow} & cone(f) \arrow[u]                 & fib(g) \arrow[lu] \arrow[rd] &     & \stackrel{\lambda_6}{\longleftarrow} & cone(f) \arrow[u]      & fib(g) \arrow[lu] \\
Y \arrow[u] \arrow[rr] \arrow[ru] &                              & O_2 \arrow[uu] &                                      & Y \arrow[u] \arrow[rr] \arrow[ru] &                              & O_2 &                                      & Y \arrow[u] \arrow[ru] &                  
\end{tikzcd}}\]
\vspace{0.5cm}
\[\adjustbox{scale=0.7}{
\begin{tikzcd}
cone(f) \arrow[rr]      &  & Z                 &                                       & cone(f) \arrow[rr]                 &                         & Z                 &                                      & cone(f)                            &                         &        \\
                        &  &                   & \stackrel{\lambda_7}{\longrightarrow} &                                    & Q \arrow[lu] \arrow[rd] &                   & \stackrel{\lambda_8}{\longleftarrow} &                                    & Q \arrow[lu] \arrow[rd] &        \\
Y \arrow[uu] \arrow[rr] &  & fib(g) \arrow[uu] &                                       & Y \arrow[uu] \arrow[rr] \arrow[ru] &                         & fib(g) \arrow[uu] &                                      & Y \arrow[uu] \arrow[rr] \arrow[ru] &                         & fib(g)
\end{tikzcd}
}\]
\vspace{0.5cm}
\[\adjustbox{scale=0.7}{
\begin{tikzcd}
cone(f)                            &                         &        &                                      &             & cone(f)               \\
                                   & Q \arrow[lu] \arrow[rd] &        & \stackrel{}{=} & Y \arrow[r] & Q \arrow[u] \arrow[d] \\
Y \arrow[uu] \arrow[rr] \arrow[ru] &                         & fib(g) &                                      &             & fib(g)               
\end{tikzcd}}.\]
\vspace{0.5cm}

\noindent One can check that the following composition of functors 
\[(\lambda_8)^*(\lambda_7)_*(\lambda_6)^*(\lambda_5)^*(\lambda_4)_*(\lambda_3)^*(\lambda_2)^*(\lambda_1)_!
 \]
 is the one giving the equivalence.
\end{proof}

\begin{corollary}\label{cor:derfield2}
Let $\D$ be a stable derivator associated to the Grothendieck category $\Mod(k)$. Then $\D^{A(n,2,-)}$ is a derivator enhancement of $\mathrm{D}(kD_4)$.
\end{corollary}
\begin{proof}
    The straightforward derivator enhancement of $\mathrm{D}(kD_4)$ is the shifted derivator $\D^{D_4}$. Then the result follows directly from \cref{derfield2}.
\end{proof}

\section{\texorpdfstring{$\infty$}{infinity}-Dold-Kan correspondence via representation theory}\label{sec:bridges}

In this section we aim to explain a connection between representation theory and homotopy theory. Indeed, Theorem \ref{maineq}, which arises as an enhancement of an equivalence in representation theory, is actually also the enhancement of a fundamental result in homotopy theory, the so-called Dold-Kan correspondence. In particular, in the language of derivators, Theorem \ref{maineq} corresponds to a bounded version of the equivalence obtained by Ariotta in \cite[Theorem~4.7]{ariotta} which is a reformulation of the $\infty$-Dold-Kan correspondence. To be able to compare the two statements we use the construction of the coherent Auslander-Reiten quiver developed by Groth and {\v S}{\v t}ov{\'\i}{\v c}ek in \cite{[3]}.  This construction is illustrated in Subsection \ref{reprth}, together with its connection to Theorem \ref{maineq}.
\subsection{Coherent Auslander-Reiten quiver}\label{reprth}
Let us start by briefly recalling some definitions. A \textbf{quiver} $Q$ consists of a set of vertices $Q_0$ and a set of arrows $Q_1$. We can associate to $Q$ the \textbf{repetitive quiver} $\hat{Q}$ whose vertices are pairs $(k,q)$ with $k \in \Z$ and $q \in Q$ and for every arrow $\alpha: q_1 \to q_2$ in $Q_1$ there are arrows $\alpha: (k,q_1) \to (k,q_2)$ and $\tilde{\alpha}: (k,q_2) \to (k+1,q_1)$ in $\hat{Q}_1$. We denote by $M_n$ the category obtained from the repetitive quiver of $A_{n+2}$ by forcing all squares of the form

\begin{equation}
\adjustbox{scale=0.8}{\begin{tikzcd}
                                & {(k,q)} \arrow[rd]   &             \\
{(k,q-1)} \arrow[ru] \arrow[rd] &                      & {(k+1,q-1)} \\
                                & {(k+1,q)} \arrow[ru] &            
\end{tikzcd}}
\end{equation}

to commute.

\begin{example}\label{M_4}
 Below is an illustration of what the category $M_4$ looks like.

\begin{equation}\label{pic$M_4$}
\adjustbox{scale=0.6}{
\begin{tikzcd}
                   &                               &                               &                               &                               & {\underline{(-3,5)}} \arrow[rd]            &                               & {\underline{(-2,5)}} \arrow[rd]            &                               & {\underline{(-1,5)}} \arrow[rd] &                    & {\underline{(0,5)}} \\
                   &                               &                               &                               & {\textbf{(-3,4)}} \arrow[ru] \arrow[rd] &                               & {(-2,4)} \arrow[ru] \arrow[rd] &                               & {(-1,4)} \arrow[ru] \arrow[rd] &                    & {\textbf{(0,4)}} \arrow[ru] &         \\
                   &                               &                               & {(-3,3)} \arrow[ru] \arrow[rd] &                               & {(-2,3)} \arrow[ru] \arrow[rd] &                               & {(-1,3)} \arrow[ru] \arrow[rd] &                               & {(0,3)} \arrow[ru] &                    &         \\
\cdots             &                               & {(-3,2)} \arrow[ru] \arrow[rd] &                               & {(-2,2)} \arrow[ru] \arrow[rd] &                               & {(-1,2)} \arrow[ru] \arrow[rd] &                               & {(0,2)} \arrow[ru]            &                    & \cdots             &         \\
                   & {\textbf{(-3,1)}} \arrow[ru] \arrow[rd] &                               & {(-2,1)} \arrow[ru] \arrow[rd] &                               & {(-1,1)} \arrow[ru] \arrow[rd] &                               & {\textbf{(0,1)}} \arrow[ru]            &                               &                    &                    &         \\
{\underline{(-3,0)}} \arrow[ru] &                               & {\underline{(-2,0)}} \arrow[ru]            &                               & {\underline{(-1,0)}} \arrow[ru]            &                               & {\underline{(0,0)}} \arrow[ru]            &                               &                               &                    &                    &         \\
                   &                               &                               &                               &                               & M_4                           &                               &                               &                               &                    &                    &        
\end{tikzcd}}
\end{equation}
\end{example}
\begin{construction}\label{constr:M_n}
Let $\D$ be a stable derivator. Following \cite[Section.~4]{[3]} we construct a coherent diagram of shape $M_n$ satisfying certain exactness and vanishing conditions. Note that there is a fully faithful functor
\begin{equation}
i_n\colon A_n\to M_n
\label{eq:i}
\end{equation}
\[\quad\quad\quad l\mapsto (0,l)\]
which we consider as an inclusion. This embedding factors as a composition of inclusions of full subcategories
\[i_n\colon A_n\stackrel{s_1}{\to} K_1\stackrel{s_2}{\to} K_2\stackrel{s_3}{\to} K_3\stackrel{s_4}{\to} M_n \]
where
\begin{itemize}
\item[(1)] $K_1$ is obtained from $A_n$ by adding the objects $(k,n+1)$ for $k\geq 0$ and $(k,0)$ for $k>0$,
\item[(2)] $K_2$ contains all objects from $K_1$ and the objects $(k,l)$ for $k>0$, 
\item[(3)] $K_3$ is obtained from $K_2$ by adding the objects $(k,n+1)$ for $k<0$ and $(k,0)$ for $k\leq 0$.
\end{itemize}
The inclusion $s_4$ thus adds the remaining objects in the negative $k$-direction. By \cref{kanextful}, associated to these fully faithful functors there are fully faithful Kan extension functors
\begin{equation}
\vcenter{
\xymatrix{
\D^{A_n}\ar[r]^-{(s_1)_\ast}&\D^{K_1}\ar[r]^-{(s_2)_!}&\D^{K_2}\ar[r]^-{(s_3)_!}&
\D^{K_3}\ar[r]^-{(s_4)_\ast}&\D^{M_n}.
}
}
\label{eq:AR}
\end{equation}

Let us denote by $F_n$ this composition of functors and by 
\[
\D^{M_n,\mathrm{ex}}\subseteq\D^{M_n}\]
the full subderivator spanned by all coherent diagrams which vanish at $(k,0),(k,n+1)$ for all $k\in\Z$ and which make all squares bicartesian. The objects with the vanishing condition are the underlined ones in Figure (\ref{pic$M_4$}). Observe that, by Proposition \ref{prop:fullsubder}, $\D^{M_n,\mathrm{ex}}$ is actually a strict full subderivator and then, in particular, it is a derivator.
\end{construction}
\noindent The above construction brings us to the following theorem.
\begin{theorem}[{\cite[Theorem.~4.6]{[3]}}]\label{thm:AR}
Let $\D$ be a stable derivator. Then \eqref{eq:AR} induces an equivalence of stable derivators \[
\begin{tikzcd}
\D^{A_n} \arrow[rr, "F_n", shift left=2] &  & {\D^{M_n,\mathrm{ex}}} \arrow[ll, "i_n^\ast", shift left=2]
\end{tikzcd}\] which is natural with respect to exact morphisms. Moreover, the inclusion $\D^{M_n,\mathrm{ex}}\to\D^{M_n}$ is exact.
\end{theorem}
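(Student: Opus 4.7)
The plan is to establish the equivalence by showing that $F_n=(s_4)_\ast (s_3)_!(s_2)_!(s_1)_\ast$ is fully faithful with essential image exactly $\D^{M_n,\mathrm{ex}}$, and that $i_n^\ast$ is a quasi-inverse. Since each $s_i$ is fully faithful, every Kan extension in the composition is fully faithful by \cref{kanextful}, so $F_n$ itself is fully faithful. Moreover, the pasting identities $s_i^\ast (s_i)_!\cong\id$ and $s_i^\ast (s_i)_\ast\cong\id$ (again \cref{kanextful}) give directly $i_n^\ast F_n\cong\id_{\D^{A_n}}$.

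The main work is to analyze each step of $F_n$ and identify its essential image. First I would verify combinatorially that $s_1\colon A_n\hookrightarrow K_1$ is a sieve (the objects added by $K_1$ are all ``above'' or ``to the right'' of $A_n$ in the zigzag order), so by \cref{extbyzero} the functor $(s_1)_\ast$ is right extension by zero at the positions $(k,n+1),\ k\geq 0$ and $(k,0),\ k>0$. Dually, $s_3\colon K_2\hookrightarrow K_3$ is a cosieve, so $(s_3)_!$ is left extension by zero at $(k,n+1),\ k<0$ and $(k,0),\ k\leq 0$. For the ``content'' steps $(s_2)_!$ and $(s_4)_\ast$ I would invoke (Der4) to compute them pointwise via slice squares: for each newly added vertex $(k,l)$ with $k\neq 0,\ 1\leq l\leq n$, the relevant slice category is designed so that, after the previous extensions by zero, the universal cocone (resp.\ cone) becomes a pushout (resp.\ pullback) square with two zero corners. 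Stability of $\D$ then upgrades each such square to a bicartesian one, and inductively all mesh squares become bicartesian. Hence $F_n(Y)\in\D^{M_n,\mathrm{ex}}$.

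The second half is surjectivity of the essential image: for $X\in\D^{M_n,\mathrm{ex}}$ I must show the counit $F_n i_n^\ast X\to X$ is an isomorphism. By (Der2) it suffices to check this on each vertex. On $A_n=i_n(A_n)$ it is the identity via $i_n^\ast F_n\cong\id$. On the boundary $(k,0),(k,n+1)$ both sides are zero: the left hand side by construction of $F_n$, the right hand side by the vanishing condition defining $\D^{M_n,\mathrm{ex}}$. On the interior vertices $(k,l)$ with $k\neq 0$, I would argue by induction on $|k|$: the bicartesian square containing $(k,l)$ with one corner at a previously identified vertex and a second corner on the zero boundary forces $X_{(k,l)}$ to be the pushout (for $k>0$) or pullback (for $k<0$) prescribed by $F_n$, so the counit is an isomorphism. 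Naturality with respect to exact morphisms is automatic because each structural building block used to define $F_n$ (extension by zero, cocartesian/cartesian completion) commutes with exact morphisms of stable derivators.

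Finally, for the assertion that $\D^{M_n,\mathrm{ex}}\hookrightarrow\D^{M_n}$ is exact, one checks that finite homotopy (co)limits of diagrams satisfying the exactness and vanishing constraints satisfy the same constraints. This is routine: zero objects are closed under (co)limits, and in a stable derivator a (co)limit of bicartesian squares is bicartesian, by commutation of Kan extensions in unrelated variables (\cref{kanextful}(2)). I expect the main obstacle to be the inductive bicartesian analysis in the second paragraph, where one must carefully identify that the slice squares relevant to $(s_2)_!$ and $(s_4)_\ast$ have the correct ``two–zero–corners'' shape after the preceding extensions by zero have been applied; this is precisely the combinatorial heart of the mesh-category construction and is where stability of $\D$ enters in an essential way.
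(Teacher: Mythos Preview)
The paper does not contain a proof of this theorem; it is quoted verbatim from \cite[Theorem~4.6]{[3]} and used as a black box in \S\ref{reprth}. So there is no ``paper's own proof'' to compare against here.

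Your overall strategy---analyze each of the four Kan extensions in \eqref{eq:AR}, identify $(s_1)_\ast$ and $(s_3)_!$ as extensions by zero via the sieve/cosieve criterion, and then argue that $(s_2)_!$ and $(s_4)_\ast$ produce exactly the bicartesian mesh squares---is precisely the approach taken in \cite[\S4]{[3]}, and your outline of the essential-image argument via (Der2) and induction on $|k|$ is sound.

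One imprecision worth flagging: your description of the slice computation for $(s_2)_!$ and $(s_4)_\ast$ as yielding ``a pushout (resp.\ pullback) square with two zero corners'' is not accurate in general. The mesh squares in $M_n$ have the form $(k,l),(k,l+1),(k+1,l-1),(k+1,l)$, and for a generic interior vertex none of the corners are forced to be zero by the preceding extension-by-zero steps; only the squares touching the boundary rows $l=0$ or $l=n+1$ acquire a single zero corner. What actually happens is that the slice category at a newly added vertex $(k+1,l)$ is cofinally a span $\llcorner$ (this is where one invokes a cofinality argument or directly \cref{ccsq}), so the left Kan extension is a pushout---but the inputs to that pushout are the previously constructed values, not zeros. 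Stability then makes each square bicartesian, which is what propagates the exactness conditions inductively. Apart from this misstatement the plan is correct.
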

\noindent We now describe the relation between our main Theorem \ref{mainthm} and the above Theorem \ref{thm:AR}. In Theorem \ref{mainthm}, we found the equivalence $\D^{A(n,2)}\rightleftarrows \D^{A_n}$ and equivalences between $\D^{A_n}$ and $\D^{K^n_{l,m}}$ for every $1\leq l \leq n-2$, $1\leq m \leq5$.  Then, by composing these equivalences with those in \cref{thm:AR}, we get the following equivalences \[\D^{A(n,2)}\rightleftarrows\D^{M_n,\mathrm{ex}},\quad \D^{K^n_{l,m}}\rightleftarrows\D^{M_n,\mathrm{ex}}\] for every $1\leq l \leq n-2$, $1\leq m \leq5$. 
\begin{example}\label{$n=4$}
Let us explicitly describe these equivalences for $n=4$; one can then deduce the general case from this particular one. Consider the following functors:
\begin{itemize}\item  $j^4:A(4,2)=\tilde{K}^4_{1,1}\to M_4,$ 
\[(0,0) \mapsto (-3,1) \quad (1,0) \mapsto (-3,4)\quad (0,1) \mapsto (-2,0)\]\[(1,2) \mapsto (-2,5) \quad (2,1) \mapsto (0,1) \quad\quad (2,2) \mapsto (0,4).\]
    Except for the objects we required to be zeroes, we show this map through the bold objects in Pictures (\ref{pic$A(4,2)$}) and (\ref{pic$M_4$}).
\oneline
\item $j^4_{1,2}:\tilde{K}^4_{1,2}\to M_4,$ \[(1,1)\mapsto (-2,3)\]
 and on the remaining objects it acts as the inclusion $j^4$ described above.
  \oneline
\item $j^4_{1,3}:\tilde{K}^4_{1,3}\to M_4$ consists of the inclusion $j^4_{1,2}$ restricted to the full subposet $\tilde{K}^4_{1,3} \subset \tilde{K}^4_{1,2} $.
  \oneline
\item $j^4_{1,4}:\tilde{K}^4_{1,4}\to M_4$ consists of the inclusion $j^4_{1,2}$ restricted to the full subposet $\tilde{K}^4_{1,4} \subset \tilde{K}^4_{1,2} $.
\oneline
\item $j^4_{2,1}:\tilde{K}^4_{2,1}\to M_4,$
\[\quad (0,0,0) \mapsto (-3,4)\quad (0,0,1) \mapsto (-2,3) \quad(0,1,0) \mapsto (-3,5) \quad (0,1,1) \mapsto (-2,5) \]\[(1,0,0),(1,0,1) \mapsto (0,1) \quad (1,1,0),(1,1,1) \mapsto (0,4).\]
\oneline
\item $j^4_{2,2}:\tilde{K}^4_{2,2}\to M_4,$ \[\quad (0,0,0) \mapsto (-3,4)\quad (0,0,1) \mapsto (-2,3) \quad(0,1,0) \mapsto (-3,5)\quad (0,1,1) \mapsto (-2,5)\]\[(1,1,0) \mapsto (0,2) \quad(1,0,0),(1,0,1) \mapsto (0,1)\]\[\,\,(1,1,1) \mapsto (0,3) \quad (2,1,0),(2,1,1) \mapsto (0,4).\]
\oneline
We can depict this map as follows
\[\adjustbox{scale=0.6}{
\begin{tikzcd}
                   &                               &                               &                               &                                                   & {\textbf{(0,1,0)}} \arrow[rd]            &                               & {\textbf{(0,1,1)}} \arrow[rd]           &                                        & {(-1,5)} \arrow[rd]                     &                                                 & {(0,5)} \\
                   &                               &                               &                               & {\textbf{(0,0,0)}} \arrow[ru] \arrow[rd] &                                                   & {(-2,4)} \arrow[ru] \arrow[rd] &                                                  & {(-1,4)} \arrow[ru] \arrow[rd]          &                                        & {\textbf{(2,1,z)}} \arrow[ru] &         \\
                   &                               &                               & {(-3,3)} \arrow[ru] \arrow[rd] &                                                   & {\textbf{(0,0,1)}} \arrow[ru] \arrow[rd] &                               & {(-1,3)} \arrow[ru] \arrow[rd]                    &                                        & {\textbf{(1,1,1)}} \arrow[ru] &                                                 &         \\
\cdots             &                               & {(-3,2)} \arrow[ru] \arrow[rd] &                               & {(-2,2)} \arrow[ru] \arrow[rd]                     &                                                   & {(-1,2)} \arrow[ru] \arrow[rd] &                                                  & {\textbf{(1,1,0)}} \arrow[ru] &                                        & \cdots                                          &         \\
                   & {(-3,1)} \arrow[ru] \arrow[rd] &                               & {(-2,1)} \arrow[ru] \arrow[rd] &                                                   & {(-1,1)} \arrow[ru] \arrow[rd]                     &                               & {\textbf{(1,0,z)}} \arrow[ru] &                                        &                                        &                                                 &         \\
{(-3,0)} \arrow[ru] &                               & {(-2,0)} \arrow[ru]            &                               & {(-1,0)} \arrow[ru]                                &                                                   & {(0,0)} \arrow[ru]            &                                                  &                                        &                                        &                                                 &        
\end{tikzcd}}\]
for $z=0,1$.
\oneline
\item $j^4_{2,4}:\tilde{K}^4_{2,4}\to M_4,\quad j^4_{2,5}:\tilde{K}^4_{2,5}\to M_4,\quad i_4:A_4\to M_4.$
They are the restrictions of $j^4_{2,2}$ to the full subposets $\tilde{K}^4_{2,4}$, $\tilde{K}^4_{2,5}$ and $A_4$ respectively. Note that $i_4$ is the map we have already defined (\ref{eq:i}).
\end{itemize} 

\noindent Let us now consider the restrictions along the functors from \cref{$n=4$}. Notice that they commute with the horizontal equivalences in the next diagram. These equivalences are those from the proof of \cref{mainthm} and they are given by restrictions along the corresponding functors in one of the directions. By \cref{thm:AR}, $(i_4)^*$ is an equivalence so also all the other slanted functors in the diagram are equivalences as well.
 
\[\begin{tikzcd}
{\D^{A(4,2)}} \arrow[r, "\sim"] & {\D^{K^4_{1,2}}} \arrow[r, "\sim"] & {\D^{K^4_{1,4}}} & \cdots                                                                                                                                                                                                                                                                      & {\D^{K^4_{2,4}}} \arrow[r, "\sim"] & {\D^{K^4_{2,4}}} \arrow[r, "\sim"] & \D^{A_4} \\
                                  &                                       &                  & \cdots                                                                                                                                                                                                                                                                      &                                      &                                      &                           \\
                                  &                                       &                  & {\D^{M_4, \mathrm{ex}}} \arrow[llluu, "(j^4)^*" description] \arrow[luu, "{(j^4_{1,4})^*}" description] \arrow[ruu, "{(j^4_{2,4})^*}" description] \arrow[rrruu, "(i_4)^*" description] \arrow[lluu, "{(j^4_{1,2})^*}" description] \arrow[rruu, "{(j^4_{2,5})^*}" description] &                                      &                                      &                          
\end{tikzcd}.\]
\end{example}
\begin{construction}\label{cons:jn}
To generalize Example \ref{$n=4$} above, consider the following inclusion for $n \in \N$:
\begin{itemize} 
    \item If $n$ even, $j^n: \tilde{A}(n,2) \to M_n$
    \[ (0,0)\mapsto (-((n-2)/2)(n-1),1), \quad (n,n) \mapsto (0,n),\]
     \[\quad(p,p-1) \mapsto\begin{cases}
  (((-n+p+1)/2)(n-1),n) &\text{if\quad} p\text{\quad odd}\\
  (((-n+p+2)/2)(n-1),1)  &\text{if\quad} p>0,\text{\quad even},
\end{cases}\]
\[\quad\qquad(p-1,p) \mapsto\begin{cases}
  (((-n+p+1)/2)(n-1)+1,0) &\text{if\quad} p\text{\quad odd}\\
  (((-n+p)/2)(n-1)+1,n+1)  &\text{if\quad} p>0,\text{\quad even}.
\end{cases}\]
    \item If $n$ odd, $j^n: A(n,2) \to M_n$
     \[ (0,0)\mapsto (((-n+1)/2)(n-1),n), \quad (n,n) \mapsto (0,n),\]
     \[\quad(p,p-1) \mapsto\begin{cases}
  (((-n+p+2)/2)(n-1),1) &\text{if\quad} p\text{\quad odd}\\
  (((-n+p+1)/2)(n-1),n)  &\text{if\quad} p>0,\text{\quad even},
\end{cases}\]
\[\quad\qquad(p-1,p) \mapsto\begin{cases}
  (((-n+p)/2)(n-1),n+1) &\text{if\quad} p\text{\quad odd}\\
  (((-n+p+1)/2)(n-1)+2,0)  &\text{if\quad} p>0,\text{\quad even}.
\end{cases}\]
\end{itemize}
\end{construction}

As before, we then state the following result.

\begin{theorem}\label{thm:general}
Let $\D$ be a stable derivator. The following diagram commutes up to a natural equivalence and all the arrows are equivalences. 
\begin{equation}\label{commdiag}
    \begin{tikzcd}
\D^{A(n,2)} \arrow[rr, "i^n"] &                                    & \D^{A_n} \\
                  & \D^{M_n, \mathrm{ex}} \arrow[lu, "(j^n)^*"] \arrow[ru, "(i_n)^*"'] &  
\end{tikzcd}.\end{equation}
\end{theorem}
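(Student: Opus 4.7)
The plan is to leverage the two equivalences already in hand: $i^n\colon \D^{A(n,2)}\xrightarrow{\sim}\D^{A_n}$ from Theorem~\ref{mainthm} and $(i_n)^*\colon \D^{M_n,\mathrm{ex}}\xrightarrow{\sim}\D^{A_n}$ from Theorem~\ref{thm:AR}. Once we have checked that the explicit map $j^n$ yields a well-defined restriction morphism $(j^n)^*\colon \D^{M_n,\mathrm{ex}}\to \D^{A(n,2)}$ and that the triangle commutes, two-out-of-three forces $(j^n)^*$ to be an equivalence as well.

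First I would check that the piecewise formulas defining $j^n$ really assemble into an injective-on-objects, order-preserving map $\tilde{A}(n,2)\to M_n$; this is a small case analysis on the parity of $n$ and on the position $(p,q)\in \tilde{A}(n,2)$. Since every square in $M_n$ commutes, this automatically promotes to a fully faithful functor of posets. A direct inspection of the formulas then shows that each vanishing vertex $(i,i+1)$ of $\tilde{A}(n,2)$ is mapped to a vertex of the form $(k,0)$ or $(k,n+1)$ in $M_n$, i.e.\ to a vanishing slot of $\D^{M_n,\mathrm{ex}}$. Consequently $(j^n)^*$ sends $\D^{M_n,\mathrm{ex}}$ into $\D^{A(n,2)}$.

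The substantive step is to establish $i^n\circ (j^n)^*\cong (i_n)^*$. Following the pattern illustrated in Example~\ref{$n=4$}, I would extend $j^n$ to a coherent family of fully faithful inclusions $j^n_{l,m}\colon \tilde{K}^n_{l,m}\to M_n$, one for each intermediate shape appearing in the proof of Theorem~\ref{mainthm}, so that every inclusion or homotopical epimorphism $\tilde{K}^n_{l,m}\to \tilde{K}^n_{l',m'}$ used in that proof factors compatibly through $M_n$. Then an inductive sweep through the seven steps of that proof shows that each $(j^n_{l,m})^*$ lands in $\D^{K^n_{l,m}}$ and agrees with the corresponding step of the construction~\eqref{i^n}. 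The reason this works is that the defining conditions of $\D^{M_n,\mathrm{ex}}$ --- vanishing on the two boundary rows and bicartesianness of every square --- already encode exactly the cocartesian squares attached, the left/right extensions by zero, and the identifications produced by the homotopical epimorphisms in the proof of Theorem~\ref{mainthm}. Specialising to $(l,m)=(1,1)$ then recovers the triangle.

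The main obstacle is the combinatorial bookkeeping inside this inductive sweep: for each of the seven steps one must verify that the relevant Kan extension or homotopical epimorphism, when viewed inside $M_n$ via $j^n_{l,m}$, is realised by restriction along the chosen intermediate inclusion. This is kept manageable by the rigidity of the mesh structure of $M_n$ --- every square is forced to be bicartesian and the vanishing slots sit precisely on the two boundary rows --- which leaves essentially no room for mismatches. Naturality with respect to exact morphisms of $\D$ is then inherited directly from the naturality already recorded in Theorems~\ref{mainthm} and~\ref{thm:AR}.
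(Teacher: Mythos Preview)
Your proposal is correct and follows essentially the same approach as the paper: the paper also derives the theorem from the $n=4$ case in Example~\ref{$n=4$}, where precisely the family of intermediate inclusions $j^n_{l,m}\colon\tilde{K}^n_{l,m}\to M_n$ is constructed and shown to intertwine the steps of the proof of Theorem~\ref{mainthm}, and then concludes that $(j^n)^*$ is an equivalence via the two-out-of-three argument you describe.
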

\begin{proof}
    The fact that $i^n$ is an equivalence follows from \cref{mainthm}. Next, let us look at \eqref{i^n} and notice that it is given by a composition of equivalences which can all easily be written as restrictions functors or inverse of restriction functors. By \cref{thm:AR}, we also know that $(i_n)^*$ is an equivalence: since the functors $j^n$ and $i_n$ commute with $i^n$, this implies that $(j^n)^*$ is an equivalence as well.
\end{proof}

\begin{remark}\label{symmetriesrem} One can construct symmetries of the stable derivator $\D^{M_n, \mathrm{ex}}$ by looking at the symmetries of $M_n$; they are described in \cite[Sections 4, 5 and 12]{[3]}. The most relevant for us are the \textbf{shift functor} and the \textbf{Auslander-Reiten translation}.
These two functors are, respectively, the restrictions along the following two maps \cite[(4.10),(5,9)]{[3]}: \begin{align*}\label{symmetries}
    f_n\colon &M_n\to M_n\qquad& t_n\colon &M_n \to M_n\\
    &(k,l) \mapsto (k+l,n+1-l)\qquad& &(k,l) \mapsto (k-1,l)
\end{align*}
Let $g_n$ be any finite composition of $f_n$, $t_n$, $f_n^{-1}$, $t_n^{-1}$. Then $g_nj^n$ gives us another embedding of $A(n,2)$ in $M_n$. Moreover, $(g_nj^n)^*$ is an equivalence as a functor $\D^{M_n, \mathrm{ex}}\to \D^{A(n,2)}$.
\end{remark}

\subsection{$\infty$-Dold-Kan correspondence}
 Let $\mathcal{A}$ be an abelian category. The classical Dold-Kan correspondence \cite[Theorem~1.2.3.7]{HA} asserts that the category $\mathrm{Fun}(\Delta\op,\mathcal{A})$ of simplicial objects of $\mathcal{A}$ is equivalent to the category $\mathrm{Ch}_{\ge 0}(\mathcal{A})$ of (homologically) nonnegatively graded chain complexes. By replacing $\mathcal{A}$ with a bicomplete stable $\infty$-category $\mathcal{C}$, we get an analog of the the classical Dold-Kan correspondence at level of $\infty$-categories. 

\begin{theorem}[{\cite[Theorem~1.2.4.1]{HA}}]\label{inftydold}
The $\infty$-categories \[\mathrm{Fun}(\mathrm{N}(\Delta\op),\mathcal{C})\text{ and } \mathrm{Fun}(\mathrm{N}(\Z_{\ge0}),\mathcal{C})\] are equivalent to one another. 
\end{theorem}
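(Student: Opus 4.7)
The plan is to realise the equivalence via the \emph{skeletal filtration} and to invert it using stability, which ensures that a filtered object is determined by its successive cofibres.

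First, I would define the forward functor $\Phi \colon \mathrm{Fun}(\mathrm{N}(\Delta\op), \mathcal{C}) \to \mathrm{Fun}(\mathrm{N}(\Z_{\ge 0}), \mathcal{C})$ sending a simplicial object $X_\bullet$ to its skeletal realisation tower
\[
n \;\longmapsto\; |\mathrm{sk}_n X| \coloneqq \colim_{[k] \in \Delta_{\le n}\op} X_k ,
\]
with structural maps induced by the fully faithful inclusions $\Delta_{\le n}\op \hookrightarrow \Delta_{\le n+1}\op$. As a colimit construction this is manifestly $\infty$-functorial in both $X_\bullet$ and $n$. A pushout-cube computation (using that $\Delta_{\le n}\op$ differs from $\Delta_{\le n-1}\op$ only by the object $[n]$ together with its degenerations) should then yield
\[
\mathrm{cof}\bigl(|\mathrm{sk}_{n-1} X| \to |\mathrm{sk}_n X|\bigr) \;\simeq\; N_n(X)[n],
\]
where $N_n(X)$ is the $\infty$-categorical normalised piece, defined as the total cofibre of the cube of degeneracies landing in $X_n$. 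This is the higher analogue of classical Moore normalisation and uses only pointedness plus stability.

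To produce an inverse $\Psi$, I would reverse this picture: given $F_\bullet$, form $G_n \coloneqq \mathrm{cof}(F_{n-1}\to F_n)$ and assemble the $G_n$, together with their automatically coherent attaching data (recoverable by stability), into a simplicial object. Concretely this can be realised as a Kan extension along the cardinality functor $\mathrm{N}(\Delta\op) \to \mathrm{N}(\Z_{\ge 0})$, $[n]\mapsto n$, after characterising the essential image of $\Phi$ by an exactness condition on all ``cells'' being bicartesian — in analogy with the condition defining $\D^{M_n,\mathrm{ex}}$ in \cref{thm:AR}. The two compositions $\Phi\Psi$ and $\Psi\Phi$ then reduce, by stability, to identities on associated graded pieces, where the verification becomes a combinatorial consequence of the identification in the previous paragraph.

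The main obstacle will be step three: producing $\Psi$ as a fully coherent $\infty$-functor, not merely pointwise. The classical (abelian) Dold-Kan inverse is a \emph{finite} formula summing degeneracies, but the $\infty$-categorical lift requires an infinite tower of higher-simplex coherences. Stability is indispensable here, since it guarantees that successive cofibres carry \emph{all} the information of a filtered object and leaves no room for additional choices — this is precisely what upgrades the correspondence from underlying $1$-categories to an equivalence of $\infty$-categories.
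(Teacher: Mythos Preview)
The paper does not prove this statement. Theorem~\ref{inftydold} is quoted verbatim from Lurie's \emph{Higher Algebra} \cite[Theorem~1.2.4.1]{HA} and is used only as background: together with Ariotta's Theorem~\ref{ari$4.7$}, it provides the homotopy-theoretic context in which the author situates the main Theorem~\ref{mainthm}. No argument for it appears anywhere in the paper, so there is nothing to compare your proposal against.

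That said, your sketch is broadly in the spirit of Lurie's actual proof: the forward functor is indeed the skeletal tower $n\mapsto|\mathrm{sk}_n X|$, and the identification of the successive cofibres with shifted normalised pieces is the key computation. Where your outline becomes thin is exactly where you flag it: the construction of the inverse $\Psi$. Saying ``assemble the $G_n$ into a simplicial object'' or ``Kan extend along the cardinality functor $[n]\mapsto n$'' does not by itself produce a coherent simplicial object, and left Kan extension along that functor lands in the wrong place (it would give a constant-on-degeneracies object, not one whose normalised pieces recover the prescribed $G_n$). Lurie's argument avoids building $\Psi$ explicitly: he instead proves $\Phi$ is an equivalence by an inductive d\'evissage on the Postnikov-style filtration, reducing to the case of a single nonzero graded piece, where both sides are identified with $\mathcal{C}$ itself. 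If you want to complete your approach, that reduction is the missing ingredient; constructing $\Psi$ directly as a coherent functor is substantially harder than the route Lurie takes.
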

Here, $\mathrm{N}$ is the nerve functor and $\mathrm{Fun}(\mathrm{N}(\Z_{\ge0}),\mathcal{C})$ can be thought of as the bounded $\infty$-category of filtered objects. $\mathrm{Fun}(\mathrm{N}(\Z_{\ge0}),\mathcal{C})$ is a full subcategory of the $\infty$-category of complete filtered objects $\hat{\mathrm{Fun}}(\mathrm{N}(\Z),\mathcal{C})$ \ie  of those filtrations whose limit vanishes. It turns out that the latter is equivalent to a suitable $\infty$-category of \textbf{coherent chain complexes} that is defined as follows. Recall that $\mathrm{Ch}$ \cite[Definition~35.1]{joyal:notes} is the pointed category whose objects are $\Z \cup \{pt\}$ and whose arrows are given by \[\mathrm{Ch}(m,n)=\begin{cases}
 \{\partial_n,0\} \quad\text{    if } m=n-1,\\
    \{\id,0\} \quad\text{   if } m=n,   \\
   \{0\} \quad\quad\text{    otherwise, } 
\end{cases} \]
where, by definition, $\partial_{n-1}\partial_n=0$ and $\{pt\}$ is a zero object. Then we define the $\infty$-category  $\mathrm{Ch}(\mathcal{C})$ as the category of functors $\mathrm{Ch}\to \mathcal{C}$ preserving the zero object. We have the following result by Ariotta.
\begin{theorem}[{\cite[Theorem~4.7]{ariotta}}]\label{ari$4.7$} There exists an equivalence of stable $\infty$-categories 
\begin{equation}\label{I}
\begin{tikzcd}
{\hat{\mathrm{Fun}}(\mathrm{N}(\Z),\mathcal{C})} \arrow[rr, "\mathscr{A}", shift left=2] &  & \mathrm{Ch}(\mathcal{C}). \arrow[ll, "\mathscr{I}", shift left=2]
\end{tikzcd}
\end{equation}
\end{theorem}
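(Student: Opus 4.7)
The plan is to adapt the mesh construction of \cref{constr:M_n} to the infinite setting and identify both complete filtered objects and coherent chain complexes as two ways of reading off data from the same infinite mesh diagram. Concretely, I would define an infinite analog $M_\infty$ of the Auslander-Reiten quiver $M_n$ (indexed by $\Z\times\Z$ with the same triangular mesh pattern and with every square commuting), together with a full subcategory $\mathrm{Fun}^{\mathrm{ex}}(\mathrm{N}(M_\infty),\mathcal{C})\subseteq\mathrm{Fun}(\mathrm{N}(M_\infty),\mathcal{C})$ of diagrams in which every square is bicartesian and a prescribed family of objects (analogous to the red objects in Picture~(\ref{pic$M_4$})) is zero.

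Next I would exhibit two embeddings $\mathrm{N}(\Z)\hookrightarrow \mathrm{N}(M_\infty)$ and $\mathrm{Ch}\hookrightarrow \mathrm{N}(M_\infty)$, picking out a \emph{filtration diagonal} and a \emph{chain complex line} respectively, and consider the induced restriction functors
\begin{equation*}
\mathrm{Fun}^{\mathrm{ex}}(\mathrm{N}(M_\infty),\mathcal{C})\to \hat{\mathrm{Fun}}(\mathrm{N}(\Z),\mathcal{C}),
\qquad
\mathrm{Fun}^{\mathrm{ex}}(\mathrm{N}(M_\infty),\mathcal{C})\to \mathrm{Ch}(\mathcal{C}).
\end{equation*}
I would show both restrictions are equivalences of stable $\infty$-categories, and then $\mathscr{A},\mathscr{I}$ are obtained by composing one with the inverse of the other. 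Concretely $\mathscr{A}$ extracts the coherent chain complex of associated graded pieces, whose differentials are built from the iterated cofibers $\mathrm{cofib}(X_{n+1}\to X_n)$, while $\mathscr{I}$ reconstructs the filtration as iterated fibers; the coherent identity $\partial\circ\partial\simeq 0$ is automatic from reading it off two adjacent bicartesian squares sharing a zero vertex, which is the same Toda-bracket mechanism that underlies the vanishing conditions in \cref{thm:AR}.

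The main obstacle will be in the unbounded, $\Z$-indexed setting: essential surjectivity of the first restriction requires constructing $\infty$-categorical Kan extensions from $\mathrm{N}(\Z)$ to $\mathrm{N}(M_\infty)$ that terminate correctly at the prescribed vanishing diagonals. In the bounded range this should reduce to \cref{thm:AR} by passing to colimits over $n$ in the mesh $M_n$, but in the unbounded case one genuinely needs the completeness hypothesis: the hat in $\hat{\mathrm{Fun}}$ forces the limit of the tower to vanish, which is precisely the condition that makes the leftward (respectively downward) extension-by-zero of the mesh well-defined and realises the inverse functor via an appropriate totalization. Checking that this totalization exists and computes the correct filtered object — equivalently, that the spectral sequence associated to the mesh converges strongly — is the technical heart of the argument.
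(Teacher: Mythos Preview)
The paper does not contain a proof of this statement: it is quoted from \cite[Theorem~4.7]{ariotta} and used as an external input, so there is no argument in the paper against which to compare your proposal directly.

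That said, your mesh-based strategy is very much in line with what the paper \emph{does} say about Ariotta's construction. In \S\ref{dkproof} the author observes, following \cite[Remarks~3.24 and~4.9]{ariotta}, that the diagram Ariotta builds to pass between filtrations and coherent chain complexes has exactly the shape $M_n$ with the same bicartesian-square and boundary-vanishing conditions that cut out $\D^{M_n,\mathrm{ex}}$; the bounded comparison of \cref{relation} is then established precisely by restricting along the two embeddings $i_n$ and $j^n$ into $M_n$. Your $M_\infty$ is the natural unbounded extrapolation of this picture, and your identification of the completeness hypothesis on $\hat{\mathrm{Fun}}(\mathrm{N}(\Z),\mathcal{C})$ as the condition needed to make the extension-by-zero/limit step of the mesh construction converge is the correct diagnosis of where the real content lies beyond the bounded case. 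What remains a genuine gap in your sketch is exactly what you flag: carrying out the $\infty$-categorical Kan extensions along the infinite mesh and verifying the relevant totalization converges is the substance of Ariotta's argument, and the paper does not supply those details either.
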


\noindent The equivalence stated in the main Theorem \ref{mainthm} turns out to be the same as the equivalence in Theorem \ref{ari$4.7$} when we pass to the associated derivators and we restrict to bounded coherent chain complexes. In the following subsection, we explain how these results are related.

 \subsection{$\infty$-Dold-Kan correspondence via \cref{mainthm}}\label{dkproof}
 We describe how the modification giving the equivalence between \cref{mainthm} and \cref{ari$4.7$} is defined on objects. Let  \[\mathrm{Fun}^{[0,n]}(\mathrm{N}(\Z) ,\mathcal{C})\]  be the full subcategory of $\hat{\mathrm{Fun}}(\mathrm{N}(\Z),\mathcal{C})$ such that the images of the arrows $i-1 \to i$ are isomorphisms for all $i>n$, and such that the images of the objects $i$ are zero objects for $i<0$. We consider also the full subcategory of chain complexes supported on a given interval \[\mathrm{Ch}^{[-n,0]}(\mathcal{C})\] of $\mathrm{Ch}(\mathcal{C})$. By \cite[Remark~3.24]{ariotta}, the essential image of the equivalence (\ref{I}), restricted to $\mathrm{Fun}^{[0,n-1]}(\mathrm{N}(\Z) ,\mathcal{C})$, is $\mathrm{Ch}^{[-n+1,0]}(\mathcal{C})$. Note that here the indexing for filtrations differs from the one in \cite{ariotta}, which gives opposite signs. Then (\ref{I}) induces an equivalence between the full subcategories
\begin{equation}
\begin{tikzcd}
{\mathrm{Fun}^{[0,n-1]}(\mathrm{N}(\Z) ,\mathcal{C})} \arrow[rr, "\mathscr{A}", shift left=2] &  & \mathrm{Ch}^{[-n+1,0]}(\mathcal{C}). \arrow[ll, "\mathscr{I}", shift left=2]
\end{tikzcd}
\end{equation}

\medskip

  By \cref{exder}, the homotopy category $ \Ho(\mathrm{Fun}(\mathrm{N}(\Z) ,\mathcal{C}))$ is the homotopy derivator of $\mathcal{C}$ evaluated in $\Z$. Considering the full inclusion \[k_n: [n-1] \to \Z,\] 
 we have that the restriction $k_n^*$ gives an equivalence \[\Ho(\mathrm{Fun}^{[0,n-1]}(\mathrm{N}(\Z) ,\mathcal{C})) \stackrel{k_n^*}{\longrightarrow} \Ho_{\mathcal{C}}(A_n).\] Indeed,  $\Ho(\mathrm{Fun}^{[0,n-1]}(\mathrm{N}(\Z) ,\mathcal{C}))$ is the essential image of $(k_n)_!$. Moreover, recalling \eqref{A(n,2)},
we have a map 
\begin{align*}\label{symmetries}
    u_n\colon \tilde{A}(n,2)&\to \mathrm{Ch};\\
    (i,i+1) &\mapsto \{pt\};\\
    (0,0) &\mapsto -n+1;\\
    (n-2,n-2) &\mapsto 0;\\
    (i+1,i) &\mapsto -n+i+2;
\end{align*} whose restriction gives a functor
\[\Ho(\mathrm{Ch}^{[-n+1,0]}(\mathcal{C}))\stackrel{u^*}{\longrightarrow} \Ho_{\mathcal{C}}(A(n,2)).\]

\begin{proposition}\label{relation} If we restrict to  bounded chain complexes and bounded filtrations, then
    \cref{mainthm} is the derivator-theoretical version of \cref{ari$4.7$}. Namely, the following is a commutative diagram where all the functors are equivalences and the lower arrow is a suitable equivalence constructed as in \cref{symmetriesrem}.
\begin{equation}
\begin{tikzcd}
{\Ho(\mathrm{Fun}^{[0,n-1]}(\mathrm{N}(\Z) ,\mathcal{C}))} \arrow[rrrr, "\mathscr{A}"] \arrow[dd, "k_n^*"'] &  &  &  & {\Ho(\mathrm{Ch}^{[-n+1,0]}(\mathcal{C}))} \arrow[dd, "u^*_n"] \\
                                                                                                            &  &  &  &                                                               \\
\Ho_{\mathcal{C}}(A_n) \arrow[rrrr]                                                                         &  &  &  & {\Ho_{\mathcal{C}}(A(n,2))}                                  
\end{tikzcd}.
\end{equation}
\end{proposition}
\begin{proof}
The fact that $k_n^*$ is an equivalence was discussed above. Since the passage to the homotopy category preserves equivalences of $\infty$-categories, $\mathscr{A}$ is an equivalence and it is induced by \cref{ari$4.7$}. 

Even if we would expect the bottom arrow in the commutative diagram to be the equivalence in \cref{mainthm}, this is not sufficient: we need a slightly more complicated composition of functors which factors through the coherent Auslander-Reiten quiver (\cref{reprth}). Let us describe why.
 
In \cite[Remark~3.24]{ariotta} Ariotta draws a diagram which describes the equivalence in his Theorem; look also at \cite[Remark~4.9]{ariotta}. Let us notice that such a diagram has the shape $M_n$ and that there are the same conditions of zero objects, commutativity and bicartesian squares which characterize the homotopy derivator $\Ho_{\mathcal{C}}^{M_n, ex}$ (\cref{constr:M_n}). Indeed 
\[\Ho_{\mathcal{C}}(M_n)=\Ho(\mathrm{Fun}(\mathrm{N}(M_n) ,\mathcal{C})).\]

Moreover, thanks to \cite[Example~4.3.2.4]{HTT} and the definition of (co)cartesian squares in \cite[Section~4.4.2]{HTT}, we can define bicartesian squares through Kan extensions and then, the coherent diagram we get by looking at $\Ho_{\mathcal{C}}(M_n)^{ex}$, is exactly Ariotta's diagram in \cite[Remark~3.24]{ariotta}.

In \cref{reprth}, we saw that the equivalence of \cref{mainthm} factors through $\D^{M_n, ex}$ (\cref{thm:general}). In particular, if we take $\D$ to be the homotopy derivator $\Ho_{\mathcal{C}}$, we also have the following commutative diagram
\begin{equation}\label{Ho}
\begin{tikzcd}
\Ho_{\mathcal{C}}(A_n) \arrow[rd, "((i_n)^*)^{-1}"'] \arrow[rr, "G^n"] &                                                    & {\Ho_{\mathcal{C}}(A(n,2))} \\
                                                                       & \Ho_{\mathcal{C}}(M_n)^{ex} \arrow[ru, "(j^n)^*"'] &                            
\end{tikzcd}.
\end{equation}
The embedding $i_n$ describes precisely the same objects as in \cite[Remark~3.24]{ariotta}. But, if we substitute $G^n$ with $u_n^*\mathscr{A}$ in (\ref{Ho}), we don't get a commutative diagram anymore. This is because $u_n^*\mathscr{A}$ differs from $G^n$ by an autoequivalence of $\Ho_{\mathcal{C}}(M_n)^{ex}$ which we define as follows. 
\noindent The map $j^n$, defined in \cref{cons:jn}, when $n=4$, gives the position of the bold objects in the diagram $M_4$, \cref{$n=4$}. From this embedding we get Ariotta's complex by composing with an autoequivalence $(b_n)^*$ which is a composition of shift functors $(f_n)^*$ and Auslander-Reiten translations $(t_n)^*$. We will prove this in \cref{egs:doldkan}, for $n=3$; for general $n$, the argument is analogous. In conclusion, the equivalence between \cref{mainthm} and \cref{ari$4.7$} is given by the following commutative diagram:

\begin{equation}
\begin{tikzcd}
{\Ho(\mathrm{Fun}^{[0,n-1]}(\mathrm{N}(\Z) ,\mathcal{C}))} \arrow[rrrr, "\mathscr{A}"] \arrow[dd, "k_n^*"'] &  &                                                 &  & {\Ho(\mathrm{Ch}^{[-n+1,0]}(\mathcal{C}))} \arrow[dd, "u_n^*"] \\
                                                                                                            &  &                                                 &  &                                                               \\
\Ho_{\mathcal{C}}(A_n) \arrow[rr, "(i_n^*)^{-1}"]                                                           &  & \Ho_{\mathcal{C}}(M_n)^{ex} \arrow[rr, "b_n^*"] &  & {\Ho_{\mathcal{C}}(A(n,2))}                                  
\end{tikzcd}
\end{equation}

\noindent where also $u_n^*$ is an equivalence because all the other functors are and the diagram commutes. \end{proof}
\begin{example}\label{egs:doldkan}
The diagram below is $M_3$ and it allows us to compare \cref{mainthm} and \cref{ari$4.7$} through \cref{thm:AR}. 
\begin{equation}\label{manifestation}
\adjustbox{scale=0.6}{
\begin{tikzcd}
                    &                               &                               &                               & {\textbf{(-2,4)}} \arrow[d]            & {(-1,4)} \arrow[d]            & {(0,4)} \arrow[d]           & {(1,4)} \arrow[d]            & {(2,4)} \arrow[d]            & {(3,4)} \arrow[d]  & {(4,4)} \\
                    &                               &                               & {\textbf{(-2,3)}} \arrow[ru] \arrow[d] & {(-1,3)} \arrow[ru] \arrow[d] & {\textbf{\underline{(0,3)}},\mathscr{I}C^0} \arrow[ru] \arrow[d] & {(1,3)} \arrow[ru] \arrow[d] & {c^1} \arrow[ru] \arrow[d] & {(3,3)} \arrow[ru] \arrow[d] & {(4,3)} \arrow[ru] &         \\
                    & \cdots                        & {(-3,2)} \arrow[ru] \arrow[d] & {(-1,2)} \arrow[ru] \arrow[d] & {\underline{(0,2)},\mathscr{I}C^1} \arrow[ru] \arrow[d] & {(1,2)} \arrow[ru] \arrow[d]  & {(2,2)} \arrow[ru] \arrow[d] & {(3,2)} \arrow[ru] \arrow[d] & {(4,2)} \arrow[ru]           & \cdots             &         \\
                    & {(-2,1)} \arrow[ru] \arrow[d] & {(-1,1)} \arrow[ru] \arrow[d] & {\underline{\textbf{(0,1)}},\mathscr{I}C^2} \arrow[ru] \arrow[d] & {(1,1)} \arrow[ru] \arrow[d]  & {c^2} \arrow[ru] \arrow[d]  & {(3,1)} \arrow[ru] \arrow[d] & {c^0} \arrow[ru]           &                              &                    &         \\
{(-2,0)} \arrow[ru] & {(-1,0)} \arrow[ru]           & {(0,0)} \arrow[ru]           & {(1,0)} \arrow[ru]            & {(2,0)} \arrow[ru]            & {(3,0)} \arrow[ru]            & {(4,0)} \arrow[ru]           &                              &                              &                    &        
\end{tikzcd}}    
\end{equation}

\smallskip
\noindent This diagram comes from Construction \ref{constr:M_n} behind Theorem \ref{thm:AR}, where the functor \[i_3:A_3 \to M_3\] embeds $A_3$ in the underlined coordinates $(0,1),(0,2),(0,3)$. The bold coordinates are the ones describing the embedding \[j^3:A(3,2) \to M_3\] in Theorem \ref{thm:general}. The coordinates denoted by $\mathscr{I}C^0$, $\mathscr{I}C^1$, $\mathscr{I}C^2$ and $c^0$, $c^1$, $c^2$ describe the link with \cite[Remark~3.24]{ariotta} and so with the equivalence (\ref{I}). The autoequivalence we are searching for is the one which allows to pass from the complex given by the bold objects $(-2,3), (0,1), (0,3)$ to the one given by the objects $c^0, c^1, c^2$. Namely, let $C$ be a complex in $\mathrm{Ch}_{[2,0]}(\mathcal{C})$
\[C:\quad \cdots \to 0 \to C^2 \to C^1 \to C^0 \to 0 \to \cdots; \]
as mentioned before, the image of $C$ under the functor $\mathscr{I}$ (\ref{I}) coincides with the objects in the coordinates $(0,1),(0,2),(0,3)$, where also $A_3$ is embedded through $i_3$. Let \[b_3: \square \to M_3\] be the embedding whose image is the square with vertices $c^0,c^1,c^2,(4,0)$. Observe that $b_3^*$ has the  following form
\begin{equation}\label{b}
\begin{tikzcd}
{\D^{M_3, \mathrm{ex}}} \arrow[rr, "f_3^*(t_3^*)^{-1}"] &  & {\D^{M_3, \mathrm{ex}}} \arrow[rr, "(j^3)^*"] &  & {\D^{A(3,2)}}
\end{tikzcd},
\end{equation}
where $f_3,t_3$ were defined in \cref{symmetriesrem}. In particular, since all the maps in (\ref{b}) are equivalences, $b_3^*$ is an equivalence. This means that if we consider an object $X \in \D^{M_3, \mathrm{ex}}$, then we have the following isomorphisms
\begin{align*}
  & f_3^*(t_3^*)^{-1}(X)_{(-2,3)}\cong X_{c_2},\quad\qquad && f_3^*(t_3^*)^{-1}(X)_{(0,1)}\cong X_{c_1},\\
  & f_3^*(t_3^*)^{-1}(X)_{(0,3)} \cong X_{c_0},\quad\quad\,\, && f_3^*(t_3^*)^{-1}(X)_{(-2,4)} \cong X_{(4,0)}\cong 0.
\end{align*}

Moreover, considering the map
\[b_3^*(i_3^*)^{-1}:\D^{A_3}\to \D^{A(3,2)},\]
we get the same equivalence as Ariotta in \cite[Theorem~4.7]{ariotta}.
\end{example}

\section{Universal tilting theory}\label{sec:tilting}

The goal of this section is to show an additional link that our main \cref{mainthm} has with homotopy theory and in particular with tilting theory. More precisely, we aim to prove that the functors which give the equivalence (\ref{maineq}) can be realized as tensor products with {\em spectral bimodules.} This is a universal version of the (derived) Morita Theory developed by J. Rickard \cite{rickard89}. In particular, we have the following well known result. 
\begin{theorem}[{\cite[Theorem~6.4]{rickard89}}]
Let $k$ be a commutative ring and $A,B$ $k$-algebras which are flat as modules over $k$. The following are equivalent.
\begin{enumerate}
    \item There is a $k$-linear triangle equivalence $\Phi: \mathrm{D}(\Mod A) \to \mathrm{D}(\Mod B)$.
    \item There is a complex of $A$-$B$-modules $X$ such that the total left derived functor \[ -\otimes^\mL_A X: \mathrm{D}(\Mod A) \to \mathrm{D}(\Mod B) \]
    is an equivalence.
\end{enumerate}
\end{theorem}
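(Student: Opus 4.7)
The plan is to prove $(1)\Rightarrow(2)$, since the converse is immediate: for any complex $X$ of $A$-$B$-bimodules, $-\otimes^\mL_A X$ is automatically a $k$-linear triangulated functor, and (2) postulates that it is an equivalence.

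First, I would set $T := \Phi(A) \in \mathrm{D}(\Mod B)$ and check that $T$ is a \emph{tilting complex} for $B$: that is, $T$ is compact (equivalently, quasi-isomorphic to a bounded complex of finitely generated projective $B$-modules), $\Hom_{\mathrm{D}(\Mod B)}(T, T[n]) = 0$ for $n \neq 0$, and $T$ classically generates the compact objects of $\mathrm{D}(\Mod B)$. Each of these properties is transported from the analogous property of $A \in \mathrm{D}(\Mod A)$ along the triangulated equivalence $\Phi$. Simultaneously, $\Phi$ induces a $k$-algebra isomorphism $A \cong \End_{\mathrm{D}(\Mod A)}(A) \cong \End_{\mathrm{D}(\Mod B)}(T)$.

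The main step is to lift $T$ to an actual complex $X$ of $A$-$B$-bimodules representing $\Phi$. To this end, fix an h-projective resolution $P \xrightarrow{\sim} T$ of right $B$-modules and consider the DG $k$-algebra $E := \mathrm{End}_B^\bullet(P)$. By the previous step, $H^0(E) \cong A$ and $H^n(E) = 0$ for $n \neq 0$, so $E$ is formal as a DG algebra. The flatness of $A$ and $B$ over $k$ enters here: it ensures both that a zig-zag $A \leftarrow \tilde A \to E$ of DG $k$-algebra quasi-isomorphisms can be constructed, and that the underived and derived bimodule tensor products compare correctly. Pulling back the $E$-$B$-bimodule structure on $P$ along this zig-zag produces an $A$-$B$-DG-bimodule structure on a $k$-flat model $X$ of $P$.

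Finally, I would compare $\Psi := -\otimes^\mL_A X$ with $\Phi$. Both are triangulated, coproduct-preserving, $k$-linear functors from $\mathrm{D}(\Mod A)$ to $\mathrm{D}(\Mod B)$ sending $A$ to $T$, and the induced actions of $A \cong \End_{\mathrm{D}(\Mod A)}(A)$ on $T$ agree by construction. Since $A$ is a compact generator of $\mathrm{D}(\Mod A)$, a standard argument (Beilinson--Bondal, or Keller's recognition theorem for triangulated categories with a compact generator) promotes this agreement at $A$ to a natural isomorphism $\Psi \simeq \Phi$; in particular $\Psi$ is an equivalence, which gives (2). The principal obstacle is the middle paragraph, namely the strictification of $E$ to an honest DG $k$-algebra quasi-isomorphic to $A$: without the flatness hypothesis, Hochschild cohomological obstructions can genuinely prevent this lift, and even with flatness the cleanest route runs through the homotopy theory of DG-categories (as developed by Keller and To\"{e}n), which is how I would organise the execution.
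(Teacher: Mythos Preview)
The paper does not prove this theorem: it is quoted as a ``well known result'' with a citation to Rickard's original article and serves only as motivation for the universal tilting theory in \cref{sec:tilting}. There is therefore no proof in the paper to compare your proposal against.

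That said, a comment on your sketch. The overall architecture (set $T=\Phi(A)$, verify it is a tilting complex, lift to a bimodule via the DG endomorphism algebra, then tensor) is the standard route and is sound for establishing~(2). Your final paragraph, however, overreaches: the claim that agreement on the compact generator $A$ forces a natural isomorphism $\Psi\simeq\Phi$ of \emph{triangulated} functors is not a consequence of the compact-generator recognition theorems you cite, and in fact whether every $k$-linear triangle equivalence is isomorphic to a standard (derived-tensor) one is a genuinely delicate question. Fortunately you do not need this for~(2): once $T$ is a tilting complex with $\End_{\mathrm{D}(\Mod B)}(T)\cong A$, the functor $-\otimes^{\mathrm L}_A X$ is an equivalence directly (it is fully faithful on the compact generator $A$ and its image generates), without any comparison to~$\Phi$. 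I would drop the $\Psi\simeq\Phi$ claim and argue the equivalence of $\Psi$ on its own.
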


As illustrated in Example \ref{egs:stable}, derived categories are precisely the underlying categories of a specific stable derivator. It is thus natural to look for a generalization of Rickard's result at level of derivators. To achieve it, let us recall that every stable derivator is canonically a closed module over the derivator of spectra $\cSp$ \cite[Appendix~A.3]{cisinskitabuada11}. Thus if $\D$ is a stable derivator, there is a canonical action
\[\otimes\colon\cSp \times \D \longrightarrow \D\]
which, for every $A,B,C \in \cCat$, allows us to define the so called \textbf{canceling tensor product} \cite[Section~5]{gps:additivity} \begin{align*}
\otimes_{[A]}\colon\cSp(B\times A\op) \times \D(A\times C) &\longrightarrow \D(B \times C)\\
(X,Y) &\mapsto\,\,\,\, X \otimes_{[A]}Y.
\end{align*}
Additionally, let us recall that we refer to an object of $\cSp(B \times A\op)$ as \textbf{spectral bimodule}.

\medskip

Our goal in the present section is to apply a derivator enhancement of Rickard's result to the functor \[G^n:\D^{A_n}\to \D^{A(n,2)}\] in \cref{mainthm}. In particular, we aim to show that $G^n$ is equivalent to a functor whose components are canceling tensor products. In other words, for every $B \in \cCat$, we aim to exhibit an equivalence between $\D(A_n \times B)$ and $\D(A(n,2) \times B)$ in the form of a canceling tensor product. Recall that an object in $\D(A(n,2) \times B)$ is an object in $\D(\tilde{A}(n,2) \times B)$ subject to some vanishing conditions (cf. Definition \ref{def:A(n,2)}). It is hence natural to look for a spectral bimodule in $\cSp(A(n,2) \times A_n\op)$.
As proved and defined in \cite[Theorem~5.9]{gps:additivity}, the unit of the canceling tensor product is given by the \textbf{identity profunctor}
\[\mathbb{I}_{A_n} \in \cSp(A_n \times A_n\op) \cong \cSp^{A_n}(A_n\op).\]
Applying Theorem \ref{mainthm} to $\cSp$ yields the following equivalence \begin{equation}\label{speq2}
    \cSp^{A_n} \stackrel{\sim}{\longrightarrow} \cSp^{A(n,2)}.\end{equation}
This allows us to define a particular spectral bimodule 
\[
T_n \in \cSp(A(n,2) \times A_n\op)\]
as being the image under the equivalence (\ref{speq2}) of the identity profunctor $\mathbb{I}_{A_n}$:
\begin{align*}
 \cSp^{A_n}(A_n\op) &\stackrel{\sim}{\longrightarrow} \cSp^{A(n,2)}(A_n\op)\cong \cSp(A(n,2) \times A_n\op)\\
 \mathbb{I}_{A_n} &\mapsto \quad T_n.
\end{align*}
For every small category $B$, we then define an action of the bimodule $T_n$ on $\D(A_n \times B)$ via the canceling tensor product
\[ \otimes_{[A_n]}\colon \cSp(A(n,2) \times A_n\op) \times \D(A_n \times B) \longrightarrow \D(A(n,2) \times B).\]
Namely, we define the functor
\begin{align*}
T_n\otimes_{[A_n]}-\colon \D(A_n \times B) &\longrightarrow \D(A(n,2) \times B)\\
X &\mapsto T_n\otimes_{[A_n]}X.
\end{align*}

What is left to discuss is why the functor $G^n$ in \eqref{maineq} is isomorphic to $T_n\otimes_{[A_n]}-$. For this purpose, let us recall the following definition.
\begin{defn}[{\cite[Definition~8.1]{[3]}}] Let $\D$ be a stable derivator and let $A,B\in\cCat$. A morphism $\D^A\to\D^B$ is \textbf{left admissible} if it can be written as a composition of 
\begin{itemize}
\item[(LA1)] restriction morphisms $u^\ast\colon\D^{B'}\to\D^{A'}$,
\item[(LA2)] left Kan extensions $u_!\colon\D^{A'}\to\D^{B'}$, 
\item[(LA3)] right Kan extensions $u_\ast\colon\D^{A'}\to\D^{B'}$ along fully faithful functors which amount precisely to adding a cartesian square or right Kan extensions along countable compositions of such functors, and
\item[(LA4)] right extensions by zero $u_\ast\colon\D^{A'}\to\D^{B'}$ for sieves $u\colon A'\to B'$.
\end{itemize}
Dually, we define a \textbf{right admissible} morphism. \end{defn}

By the construction in the proof of \cref{mainthm}, the functor $G^n$ is left admissible. This allows us to apply the following result, stating that every left admissible morphism is a canceling tensor product. 

\begin{theorem}[{\cite[Theorem~8.5]{[3]}}]\label{specact}
Let $\D$ be a stable derivator and let $F\colon\D^A\to\D^B$ be a morphism.
If $F$ is left admissible then there is a bimodule $M\in\cSp(B\times A\op)$ and a natural isomorphism \[F\cong M\otimes_{[A]}-\colon\D^A\to\D^B.\]
\end{theorem}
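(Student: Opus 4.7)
The plan is an Eilenberg--Watts style argument: define $M$ by evaluating a parametrised version of $F$ on the identity profunctor $\mathbb{I}_A$, and then check by induction on an admissible decomposition of $F$ that the resulting comparison $M\otimes_{[A]}-\Rightarrow F$ is an isomorphism. Since each building block (LA1)--(LA4) is a purely 2-functorial construction, $F$ makes sense uniformly for every stable derivator, in particular for $\cSp$ itself; and by \cref{kanextful}(2), each block admits a canonical parametrisation yielding, for every $C\in\cCat$, a morphism $F^C\colon\D^{A\times C}\to\D^{B\times C}$ strictly compatible with restrictions in the $C$-direction. I would then set
\[M := F^{A\op}_{\cSp}(\mathbb{I}_A)\in\cSp^{B}(A\op)=\cSp(B\times A\op),\]
using the copy of $F$ applied to the derivator of spectra and parametrised along $A\op$.

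Next, I would produce the comparison $\alpha_X\colon M\otimes_{[A]}X\to F(X)$. The cancelling tensor $\otimes_{[A]}$ is cocontinuous and commutes with the external $\cSp$-action in its spectrum variable, so applying the parametrised $F^{A\op}$ to the defining isomorphism $\mathbb{I}_A\otimes_{[A]}X\cong X$ produces a natural transformation $\alpha$. Pointwise invertibility of $\alpha$ is then proved by induction on the length of a decomposition $F=F_k\circ\cdots\circ F_1$; the base case $F=\id$ is immediate from the universal property of the identity profunctor.

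The inductive step splits by the four kinds of building blocks. The easy cases are (LA1) restriction and (LA2) left Kan extension: both commute with $\otimes_{[A]}-$ because they are colimit-like and the $\cSp$-action is cocontinuous, so the comparison on the composite is identified with the comparison for the previous step. The main obstacle will be the right-adjoint cases (LA3) and (LA4). Here stability is essential: a right Kan extension that merely adds a cartesian square automatically produces a cocartesian one, so it can be re-expressed as a left Kan extension along an auxiliary functor together with a fibre/cofibre identification (in the spirit of \cref{tcof}), and right extension by zero along a sieve fits into a canonical bicartesian square whose opposite leg is left extension by zero along the complementary cosieve, reducing (LA4) to (LA2) up to a suspension shift. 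Establishing these translations cleanly --- verifying in particular that they commute with the $\cSp$-action and with the $C$-parametrisation --- is the crux of the proof, and is precisely what makes the admissibility conditions tight enough to yield a bimodule representation; the converse, that every bimodule action is left admissible, is not needed here.
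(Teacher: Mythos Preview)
This theorem is not proved in the present paper: it is quoted verbatim from \cite[Theorem~8.5]{[3]} and invoked as a black box to derive \cref{tensor}. There is therefore no in-paper proof to compare your proposal against.

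That said, your sketch is broadly the right shape for such a result and matches the strategy of the cited source: define $M$ by applying the spectral instance of $F$ to the identity profunctor, then verify $M\otimes_{[A]}-\cong F$ by induction on an admissible decomposition, using cocontinuity of the action for (LA1)--(LA2) and stability to convert the right-Kan-extension cases (LA3)--(LA4) into left-Kan-extension form. The place where your outline is thinnest is exactly where the real work lies: making precise the ``re-expression'' of a right Kan extension that adds a single cartesian square as a cocontinuous construction compatible with the $\cSp$-action, and handling the countable composite in (LA3). In \cite{[3]} this is done via the explicit calculus of homotopy exact squares and the monoidal structure of the $\cSp$-module; your appeal to \cref{tcof} is the right intuition but would need to be unpacked into an actual natural isomorphism at the level of the action, not just at the level of underlying objects.
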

\noindent The proof of \cite[Theorem~8.5]{[3]} shows that in our case the module $M$ is given by $T_n$ and then we can conclude with the following proposition

\begin{proposition}\label{tensor} The following equivalence of functors holds
    \[ G^n \cong T_n\otimes_{[A_n]}-.\]
\end{proposition}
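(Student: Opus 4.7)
The strategy is dictated by the surrounding text: once $G^n$ is known to be left admissible, Theorem \ref{specact} immediately provides a spectral bimodule $M \in \cSp(A(n,2) \times A_n\op)$ together with a natural isomorphism $G^n \cong M \otimes_{[A_n]} -$, and it only remains to identify $M$ with $T_n$ by evaluating on the unit of the canceling tensor product.

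First I would verify that $G^n$ is left admissible. Recall that $G^n$ is the inverse of the equivalence $i^n$ explicitly written in \eqref{i^n}, which is a composition of operations of three kinds: Kan extensions $(i^n_{k,1})_!$, $(i^n_{k,2})_\ast$, $(i^n_{k,3})_!$ along fully faithful functors (adding respectively a cocartesian square, a cartesian square, or extending by zero along a cosieve), and restrictions $(i^n_{k,m})^\ast$ for $m \in \{4,5\}$ along homotopical epimorphisms. Each such factor is an equivalence onto the corresponding strict full subderivator, being fully faithful either by \cref{kanextful} or by \cref{defn:htpy-epi}. Reversing the composition, the functors appearing in $G^n$ are precisely: restrictions (from inverting a Kan extension along a fully faithful inclusion; type LA1), left Kan extensions along cosieves or along homotopical epimorphisms (either direct applications of $(i^n_{k,3})_!$ or inverses of restrictions; type LA2), and right Kan extensions adding a cartesian square (the $(i^n_{k,2})_\ast$; type LA3). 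All of these are admissible, so $G^n$ is left admissible.

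Theorem \ref{specact} then produces the bimodule $M \in \cSp(A(n,2) \times A_n\op)$ with a natural isomorphism $G^n \cong M \otimes_{[A_n]} -$ valid for every stable derivator $\D$.

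Finally, to identify $M$ with $T_n$, I specialize this isomorphism to $\D = \cSp$ evaluated parametrically over $A_n\op$ and apply both sides to the identity profunctor $\mathbb{I}_{A_n} \in \cSp^{A_n}(A_n\op)$. Since $\mathbb{I}_{A_n}$ is the unit for the canceling tensor product (\cite[Theorem~5.9]{gps:additivity}), we obtain
\[
M \;\cong\; M \otimes_{[A_n]} \mathbb{I}_{A_n} \;\cong\; G^n(\mathbb{I}_{A_n}) \;=\; T_n,
\]
where the last equality is the very definition of $T_n$ via the equivalence \eqref{speq2}. The only delicate point is the bookkeeping in the first step: one must check that every ``inverse'' arising in the construction of $G^n$ literally coincides with an operation of type (LA1)--(LA4) on the appropriate strict full subderivator, rather than merely with an abstract equivalence of derivators. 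This is guaranteed by the strict full subderivator formalism of Section \ref{strictfullsubder} together with the invertibility of the units and counits asserted in \cref{kanextful}.
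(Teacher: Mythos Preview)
Your proposal is correct and follows exactly the approach the paper takes: the paper's argument is contained in the two sentences surrounding \cref{specact}, which assert that $G^n$ is left admissible by construction and that the proof of \cref{specact} identifies the resulting bimodule as $T_n$. You have simply spelled out both steps---the case analysis showing each factor of $G^n$ is of type (LA1)--(LA3), and the evaluation on $\mathbb{I}_{A_n}$ that pins down $M\cong T_n$---which the paper leaves implicit.
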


\section*{Declarations}
This research is mainly supported by GA{\v C}R project 20-13778S and partially by the Charles University project PRIMUS/21/SCI/014.
Data sharing not applicable to this article as no datasets were generated or analyzed during the current study. Ethical approval, competing interests and authors' contributions are not applicable. 

\end{document}